\documentclass[11pt]{amsart}
\setlength\hfuzz{2pt}
\setlength\vfuzz{2pt}
\usepackage{amssymb,amsmath,latexsym,enumerate, dsfont}
\usepackage{graphicx,verbatim,enumerate,%
ifpdf,mdwlist,xspace}
\usepackage{color}
\usepackage{mathabx}
\ifpdf
\usepackage{hyperref}
\else
\usepackage[hypertex]{hyperref}
\fi
\usepackage{graphicx}
\usepackage{cancel}
\usepackage{float}
\setlength{\oddsidemargin}{-0.1in}
\setlength{\evensidemargin}{-0.1in}
\setlength{\textwidth}{6.5in}
\setlength{\topmargin}{-.3in}
\setlength{\textheight}{9.6in}
\setlength{\parindent}{0pt}%
\setlength{\parskip}{0.2cm}
\renewcommand{\phi}{\varphi}

\newtheorem{theorem}{Theorem}[section]
\newtheorem{thm}[theorem]{Theorem}
\newtheorem{fact}[theorem]{Fact}
\newtheorem{lemma}[theorem]{Lemma}
\newtheorem{corollary}[theorem]{Corollary}

\newtheorem{question}[theorem]{Question}
\newtheorem{observation}[theorem]{Observation}
\newtheorem{defi}[theorem]{Definition}
\newenvironment{defn}{\begin{defi} \rm}{ \end{defi}}
\newtheorem{exa}[theorem]{Example}
\newenvironment{example}{\begin{exa} \rm}{ \end{exa}}
\newenvironment{remark}{\begin{rem} \rm}{ \end{rem}}

\newtheorem{rem}[theorem]{Remark}
\newtheorem{claim}[theorem]{Claim}

\DeclareMathOperator{\range}{range}

\DeclareMathOperator{\Id}{Id}
\DeclareMathOperator{\bId}{\mathbf{Id}}

\DeclareMathOperator{\Ceers}{\mathbf{Ceers}}
\DeclareMathOperator{\Dark}{\mathbf{Dark}}
\DeclareMathOperator{\dark}{\textrm{Dark}}
\DeclareMathOperator{\Light}{\mathbf{Light}}
\DeclareMathOperator{\Code}{Code}
\DeclareMathOperator{\Edge}{Edge}

\DeclareMathOperator{\Th}{Th}

\newcommand{\ismc}{$\I$-strongly minimal cover\xspace}

\newcommand{\I}{\mathcal{I}}

\newcommand{\rel}[1]{\mathrel{#1}}

\hoffset=0in%
\voffset=0in%
\oddsidemargin=0in%
\evensidemargin=0in%
\topmargin=0in%
\textwidth=6.5in%
\textheight=8.5in
\usepackage{pifont}
\def\G1{\hbox{$\displaystyle{\mbox{\ding{172}}}$}}

\title[The Complexity of The Theory of Ceers]{The Theory of Ceers
Computes True Arithmetic}

\author[Andrews]{Uri Andrews}
\address{Department of Mathematics\\
University of Wisconsin\\
Madison, WI 53706-1388\\
USA}
\email{\href{mailto:andrews@math.wisc.edu}{andrews@math.wisc.edu}}
\urladdr{\url{http://www.math.wisc.edu/~andrews/}}

\author[Schweber]{Noah Schweber}
\address{Department of Mathematics\\
	University of Wisconsin\\
	Madison, WI 53706-1388\\
	USA}
\email{\href{mailto:ndschweber@gmail.com }{ndschweber@gmail.com}}
\urladdr{\url{http://www.math.wisc.edu/~schweber/}}

\author[Sorbi]{Andrea Sorbi}
\address{Dipartimento di Ingegneria Informatica e Scienze Matematiche\\
Universit\`a Degli Studi di Siena\\
I-53100 Siena, Italy}
\email{\href{mailto:andrea.sorbi@unisi.it}{andrea.sorbi@unisi.it}}
\urladdr{\url{http://www3.diism.unisi.it/~sorbi/}}

\thanks{
Andrews was partially supported by NSF grant DMS1600228. Andrews and Sorbi were partially supported by a grant of the Science
Committee of the Republic of Kazakhstan, grant number AP05131579.
Sorbi is a member of INDAM-GNSAGA. Sorbi's research was partially supported by PRIN 2017 Grant
``Mathematical Logic: models, sets, computability''
}

\keywords{Computably enumerable equivalence relation; computable
reducibility on equivalence relations.}

\subjclass[2010]{03D25}

\begin{document}

\begin{abstract}
We show that the theory of the partial order of computably enumerable
equivalence relations (ceers) under computable reduction is 1-equivalent to
true arithmetic. We show the same result for the structure comprised of the
dark ceers and the structure comprised of the light ceers. We also show the
same for the structure of $\I$-degrees in the dark, light, or complete
structure. In each case, we show that there is an interpretable
copy of $(\mathbb{N},+,\cdot)$.
\end{abstract}

\maketitle

\section{Introduction}\label{sct:introduction}
A major theme in investigating computability theoretic reducibilities has
been to measure, and when possible to characterize, the complexity of the
first order theory of their degree structures. Throughout the paper we regard
a degree structure as a poset, and if $\mathcal{P}$ is a poset then the
\emph{theory of $\mathcal{P}$}, denoted by $\Th(\mathcal{P})$, is the set of
sentences, in the first order language of posets, that are true in
$\mathcal{P}$.
Investigating the theory of a degree structure is an important task, not only because it sheds relevant
information about the corresponding reducibility, but also because it generally stimulates
useful techniques and constructions which are developed for this purpose.

Typically, a reducibility is a binary relation $\leq$ on subsets of the set
$\omega$ of natural numbers, which gives rise to a degree structure
$\mathcal{D}$ which partitions the power set $P(\omega)$ into equivalence
classes called degrees. Let $\nu: P(\omega) \rightarrow
\mathcal{D}$ be the surjection so that $\nu(X)$ is the degree of $X$, and
$\leq$ denote the partial ordering relation on $\mathcal{D}$. The relations
(in $X,Y$) $\nu(X)=\nu(Y)$ and $\nu(X)\leq \nu(Y)$ are typically arithmetical, so that
one can effectively translate first order sentences regarding degrees to
second order sentences of arithmetic. This yields a reduction
$\Th(\mathcal{D})\leq_1 \Th^2(\mathbb{N})$, where the latter symbol denotes
true second order arithmetic. In many cases, it is true that these two
theories are 1-equivalent, and the challenge is to show that the reverse
reduction, i.e. $\Th^2(\mathbb{N}) \leq_1 \Th(\mathcal{D})$, holds as well. By the Myhill Isomorphism Theorem, the two theories are
then computably isomorphic, and thus
$\Th(\mathcal{D})$ is as complicated as it
can be.  The literature here is indeed rich of classical and celebrated
results, starting from Simpson~\cite{Simpson} who showed that the theory of
the Turing degrees is computably isomorphic to $\textrm{Th}^2(\mathbb{N})$.
See also \cite{Slaman-Woodin1}. To mention two other major reducibilities,
the theory of the $m$-degrees (\cite{Nerode-Shore}) and the theory of the
enumeration degrees (\cite{Slaman-Woodin2}, see also \cite{CAi-totality}) are
also computably isomorphic to $\textrm{Th}^2(\mathbb{N})$.

When no restriction is taken on the universe of the reducibility, then one
talks about the \emph{global} degree structure of that reducibility. It is
common however to consider \emph{local} degree structures as well, by
restricting attention to special countable families of degrees. This is the
case for instance (just to consider some local structures of the
aforementioned global structures) of the Turing degrees below the first jump,
or the Turing degrees of the computably enumerable (c.e.) sets, or the
$m$-degrees of the c.e. sets, or the enumeration degrees below the first
enumeration jump. If $\mathcal{D}$ is a local structure then typically one
finds a surjection $\nu:\omega \longrightarrow \mathcal{D}$ such that the
relations (in $x,y$) $\nu(x)=\nu(y)$ and $\nu(x)\leq \nu(y)$ are
arithmetical, so that one can effectively translate sentences on $\mathcal{D}$
into first order arithmetical sentences, establishing a reduction
$\Th(\mathcal{D})\leq_1 \Th^1(\mathbb{N})$, where the latter set denotes true
first order arithmetic $\Th(\mathbb{N}, +, \times)$. To show that
$\Th(\mathcal{D})$ is as complicated as possible (i.e. computably isomorphic
to $\Th^1(\mathbb{N})$) it is then enough to show $\Th^1(\mathbb{N}) \leq_1 \Th(\mathcal{D})$. For instance,
this has been done for the aforementioned local structures: for the Turing
degrees below the first jump, see Shore~\cite{Shore}, for the c.e. Turing
degrees see Nies, Shore and Slaman~\cite{Nies-Shore-Slaman}, for the c.e.
$m$-degrees, see Nies~\cite{Nies-last-question}, for the enumeration degrees
below the first enumeration jump, see Ganchev and
Soskova~\cite{Ganchev-Soskova1, Ganchev-Soskova2}.

The above examples are about reducibilities on sets of natural numbers. We
consider in this paper a reducibility on equivalence relations on $\omega$,
instead of subsets of $\omega$. The reduction is defined as follows: if $R,S$
are equivalence relations on $\omega$, we say that $R$ is \emph{computably
reducible} (or, simply, \emph{reducible}) to $S$ (notation: $R \leq S$) if
there is a computable total function $f$ such that
\[
(\forall x,y)[x  \rel{R} y \Leftrightarrow f(x) \rel{S} f(y)].
\]
As with other reducibilities, $\leq$ gives rise to an equivalence relation
$\equiv$, where $R \equiv S$ if $R \leq S$ and $S \leq R$. The equivalence
class of an equivalence relation $R$ under $\equiv$ will be called the
\emph{degree of $R$}. The first study of computable reducibility on
equivalence relations on natural numbers was initiated by Ershov \cite{Ershov:positive, Ershov:NumberingsI} in the 1970s. Recently, there has
been a revived interest in this reducibility, motivated by considering computable reducibility as an effective version of Borel reducibility on equivalence relations \cite{Fokina2010effective}. Borel reducibility on equivalence relations is a primary target of interest in descriptive set theory, see for
instance~\cite{Becker-Kechris:descriptive}. The revived interest in computable reducibility has also been caused by its high potential as a
tool for measuring the computational complexity of classification problems in computable mathematics. For
instance it is shown in \cite{Fokina-et-al-several} that the isomorphism
relation for various familiar classes of computable groups is
$\Sigma^1_1$-complete under this reducibility.

The global structure of the degrees of equivalence relations, however, has
not been extensively studied. Much more attention has been given to its local
structure consisting of the degrees of the c.e. equivalence
relations (commonly called \emph{ceers} after \cite{Gao-Gerdes}). Indeed,
ceers have played a leading role in the tale of computable reducibility as they
appeared as the main characters of what are perhaps the first results about
$\leq$ (although before the notion appeared in the literature). Namely,
Miller~III \cite{Miller} constructed a finitely presented group whose
word problem is $\Sigma^0_1$-complete with respect to $\leq$, and
Miller~III \cite{Miller} proved that the isomorphism problem for finitely
presented groups is $\Sigma^0_1$-complete with respect to $\leq$. For other
applications of $\leq$ to word problems of finitely presented groups see
\cite{Nies-Sorbi}. The first work explicitly tackling $\leq$ on ceers was
done by Ershov~\cite{Ershov:positive}, who pointed out important examples of
$\Sigma^0_1$-complete ceers, showing also that their degree is
join-irreducible. In the 1980s, the reducibility $\leq$ on ceers was applied to
study computability theoretic properties of the relation of provable
equivalence of sufficiently expressive formal systems,
see for instance
\cite{Bernardi-Sorbi, Montagna-ufp}. Additional interest for
computable reducibility on ceers comes from the study of c.e.\ presentations
of structures, as is examined for instance in
\cite{fokina2016linear,gavruskin2014graphs}. It is also worth noticing that
ceers have been investigated in computability theory also not in connection
with computable reducibility. For instance, Carroll~\cite{Carroll} studied
the lattice of ceers under inclusion, and  Nies~\cite{Nies-eq-rel-mod-finite}
studied ceers modulo finite differences. They both showed that the first
order theory of the resulting structures is computably isomorphic to
$\textrm{Th}^1(\mathbb{N})$.

More explicitly oriented toward a degree-theoretic approach are the papers on
ceers by Gao and Gerdes~\cite{Gao-Gerdes},
Andrews~et~al.~\cite{Andrews-et-al}, \cite{jumps}, and finally Andrews and
Sorbi~\cite{Andrews-Sorbi}. The last paper provides a thorough investigation
of the structure $\Ceers$ comprised of $\equiv$-degrees of ceers under the reducibility $\leq$, with emphasis on existence and non-existence of
meets and joins, minimal covers, definable classes of degrees, and
automorphisms.

It was shown in
\cite{Andrews-et-al} that $\Th(\Ceers)$ is undecidable, and even the
$\forall\exists\forall$-fragment (again, in the language of posets) is undecidable.
In this paper we completely characterize the
complexity of $\Th(\Ceers)$ by showing that it is in fact as complicated as
it can be, namely computably isomorphic to $\textrm{Th}^1(\mathbb{N})$. We do
this also for two suborders of $\Ceers$ introduced in \cite{Andrews-Sorbi}, called $\Light$ and $\Dark$  which are defined in the next section,
and also for the quotient structures obtained by quotienting the three structures
$\Ceers$, $\Light$, and $\Dark$ modulo uniform joins with finite ceers. In
each case, we show that there is an interpretable copy of
$(\mathbb{N},+,\cdot)$ in the degree structure.

\subsection{Outline of proof}\label{outlineofproof}

In section \ref{Coding Using Parameters}, we give a definable copy of $(\mathbb{N},+,\cdot)$ using a parameter in $\Dark/\I$. To do this, we show
that we can encode any graph (modulo some isolated vertices) in a particular way. In particular, for any $\Dark/I$-degree $\boldsymbol{c}$, we let the universe of $G_{\boldsymbol{c}}$ be the collection of minimal $\I$-degrees $\leq \boldsymbol{c}$. We say that $\boldsymbol{a}$ is an $\I$-strongly minimal cover of a pair of incomparable $\I$-degrees $\boldsymbol{d}$ and $\boldsymbol{e}$ if the degrees $<_\I \boldsymbol{a}$ are precisely the degrees $\leq_{\I}\boldsymbol{d}$ and the degrees $\leq_{\I}\boldsymbol{e}$. We put an edge in $G_{\boldsymbol{c}}$ between degrees $\boldsymbol{d}$ and $\boldsymbol{e}$ if there are two incomparable degrees $\leq \boldsymbol{c}$ which are $\I$-strongly minimal covers of $\boldsymbol{d}$ and $\boldsymbol{e}$. Section \ref{sct:minimalIdark} is devoted to proving technical details about minimal $\I$-degrees and their $\I$-strongly minimal covers. The main purpose of these details are to allow us in section \ref{Coding Using Parameters} to construct, for any given computable graph $G$, a dark $\I$-degree $\boldsymbol{c}$ so that $G_{\boldsymbol{c}}$ is isomorphic to a disjoint of $G$ with a graph that has no edges. In particular, since countable graphs with no isolated vertices are universal for all countable structures under definability, this gives us a coded copy of $(\mathbb{N},+,\cdot)$ using a single parameter $\boldsymbol{c}$.

In section \ref{sct:dark-arithmetic}, we remove the dependence on the parameter to define a copy of $(\mathbb{N},+,\cdot)$. We do this as follows: We fix an interpretation of $(\mathbb{N},+,\cdot)$ in a graph. We first isolate the collection of degrees $\boldsymbol{c}$ for which $G_{\boldsymbol{c}}$ interprets (via this fixed interpretation) a model of Robinson's system $Q$
of arithmetic. We call these good codes. Next, we consider the collection of pairs $(\boldsymbol{c},\boldsymbol{d})$ where $\boldsymbol{c}$ is a good code and $\boldsymbol{d}$ is in $\boldsymbol{c}$'s encoded model of $Q$. Our goal is to isolate the collection of such pairs where $\boldsymbol{d}$ is in the standard part of the model. To do this, we define an encoding of functions inside $\Dark_{/\I}$ (via our encoding of graphs) and define an equivalence relation on such pairs $(\boldsymbol{c},\boldsymbol{d})\sim (\boldsymbol{c}',\boldsymbol{d}')$ if and only if there is an encoded order-preserving bijection between the interval
$[\boldsymbol{0},\boldsymbol{d}]$ in $\boldsymbol{c}$'s model of $Q$ and the interval $[\boldsymbol{0},\boldsymbol{d}']$ in $\boldsymbol{c'}$'s model of $Q$. In other words, $\boldsymbol{d}$ and $\boldsymbol{d}'$ code the same number in their respective models. Not every function can be encoded in the structure, since it is countable, but our encoding is sufficient to capture every finite function on our models of $Q$. Much of section \ref{sct:dark-arithmetic} is devoted to this encoding. Finally, we define $\mathcal{N}$ to be the collection of pairs $(\boldsymbol{c},\boldsymbol{d})$ where $\boldsymbol{c}$ is a good code and for every good code $\boldsymbol{c}'$, there is some $\boldsymbol{d}'$ so that $(\boldsymbol{c},\boldsymbol{d})\sim (\boldsymbol{c}',\boldsymbol{d}')$. Since there is some good code which encodes a copy of $(\mathbb{N},+,\cdot)$, and our collection of encoded functions includes all the finite functions on our models of $Q$, this isolates the $\sim$-classes of the standard naturals within the various models of $Q$. Finally, we can define the operations $+$ and $\cdot$ on $\mathcal{N}$ which makes it isomorphic to $(\mathbb{N},+,\cdot)$.

The above gives an interpreted copy of $(\mathbb{N},+,\cdot)$ in $\Dark/\I$. Since $\I$-equivalence is $\emptyset$-definable
(i.e., definable without parameters)
in $\Dark$, and $\Dark$ is $\emptyset$-definable in $\Ceers$, this shows the result for $\Dark/\I$, $\Dark$, and $\Ceers$. We also here observe that the same encoding works in $\Ceers/\I$.

Note that in both the $\Light_{/\I}$ degrees and the $\Light$ degrees, there is a single minimal degree. Thus our encoding needs to be changed to consider $\Light_{/\I}$-degrees which are minimal over the degree of $\Id$. In section \ref{LightGraphs} we show that with this slightly altered encoding, we can again encode every computable graph modulo some isolated vertices in the $\Light_{/\I}$-degrees using a parameter. In section \ref{LightWithoutParameters}, we repeat
the strategy used in section~5
to remove the dependence on the parameter in the $\Light_{/\I}$-degrees. In this setting, the simple encoding of functions used in section \ref{sct:dark-arithmetic} does not suffice, so we introduce a more complicated encoding of functions. Aside from the different encoding of functions, the strategy is the same and again we encode our copy of $(\mathbb{N},+,\cdot)$ in $\Light_{/\I}$. Since $\I$-equivalence is $\emptyset$-definable in $\Light$, this also yields the result for $\Light$.

\section{Background material}
For more information and details about unexplained computability theoretic
terminology or results exploited in the paper without any reference, the
reader may consult any standard textbook, see e.g.
\cite{Rogers:Book,Soare:Book}. In this section we review some background
material concerning ceers and computable reducibility. The $\equiv$-degree of
an equivalence relation $R$ will be denoted by $\deg(R)$.

\subsection{The classes $\I, \Light$, and $\Dark$}
We recall the following partition of ceers, introduced and studied in
\cite{Andrews-Sorbi}. Let $R$ be a ceer:
\begin{itemize}
  \item Let $\Id_n$ be the equivalence relation given by $x\Id_n y$ if and only if $x\equiv y\, (\text{mod}\, n)$.
  \item Let $\Id$ be the equivalence relation given by $x\Id y$ if and only if $x=y$.
  \item $R$ is \emph{finite} if it has only finitely many equivalence
      classes. Note that for each $n\geq 1$, $R$ has exactly $n$ classes if and only if $R \equiv \Id_n$.
  \item $R$ is \emph{light} if  there is an infinite c.e. set $W$
       such that $x \cancel{\rel{R}}
      y$ for each pair of distinct $x,y \in W$
      (following \cite{Gao-Gerdes}, a set with this property will be called a \emph{transversal for $R$}). Equivalently, a
      ceer $R$ is light if and only if $\Id \leq R$.
     \item $R$ is \emph{dark} if it is neither finite nor light.
\end{itemize}
The symbols $\I$, $\Light$, $\Dark$ denote the classes of finite ceers, light
ceers, and dark ceers, respectively. These classes partition the ceers, and
give rise to a corresponding partition of the degrees of ceers into three
classes of degrees (still denoted by $\I, \Light, \Dark$). $\I$ is an initial
segment of $\Ceers$ having order type $\omega$. In $(\Ceers,\leq)$, the degree of $\Id$, and thus each of these three classes are
first order $\emptyset$-definable \cite[Corollary 8.1]{Andrews-Sorbi}. $\Ceers, \Light$, and $\Dark$ are
neither upper nor lower semilattices. In this regard, the most spectacular
case is provided by dark degrees, as no pair of incomparable dark degrees has
either meet or join in $\Ceers$ or in $\Dark$ \cite[Theorems 5.8 and 7.18]{Andrews-Sorbi}.

\subsection{Some general facts about ceers}
We describe three constructions of new ceers starting from given ceers and/or
c.e. sets.

The first construction is the \emph{uniform join} $R \oplus S$ which is the
equivalence relation which copies $R$ on the even numbers and $S$ on the odd
numbers: $x \rel{R \oplus S} y$ if there exist $u,v$ such that $x=2u, y=2v$
and $u\rel{R} v$, or $x=2u+1, y=2v+1$ and $u\rel{S} v$. This operation
extends in the obvious way to any countable number of equivalence relations,
see Section~2.1 of \cite{Andrews-Sorbi}.

The second construction is described in detail in Section~2.3 of
\cite{Andrews-Sorbi}:

\begin{defn}\label{def:restrictions}
If $E$ is a ceer and $W$ is a non-empty c.e. set then $E\restriction{W}$ (called the
\emph{restriction of $E$ to $W$}) is the ceer $x \rel{E\restriction{W}} y$ if
and only if $h(x) \rel{E} h(y)$, where $h: \omega \rightarrow  W$ is any
computable surjection (up to $\equiv$ the definition does not depend on the
chosen $h$).
\end{defn}

\begin{remark}\label{rem:restrictions}
It is clear that $h$ provides a reduction $E\restriction W \leq E$, which we
call the \emph{inclusion} of $E\restriction W$ into $E$. If $X \leq R$ via a
reduction $f$ then $X\equiv R\restriction W$, where $W=\range(f)$.
\end{remark}

\begin{fact}\label{fct:restriction}
If $X\leq R_1\oplus R_2$ then there are ceers
$X_1\leq R_1$ and $X_2\leq R_2$ such that $X \equiv X_{1}\oplus X_{2}$.
\end{fact}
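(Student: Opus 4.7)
The plan is to decompose the reduction $f$ witnessing $X \leq R_1\oplus R_2$ according to the parity of $f$'s image, thereby producing appropriate restrictions of $R_1$ and $R_2$ whose uniform join recovers $X$ up to $\equiv$. Concretely, I would define the c.e.\ sets $U_1 := \{u : 2u \in \range(f)\}$ and $U_2 := \{u : 2u+1 \in \range(f)\}$, and then set $X_i := R_i\restriction U_i$ in the generic case that both $U_i$ are nonempty. By Remark~\ref{rem:restrictions}, the inclusion of $R_i\restriction U_i$ into $R_i$ gives $X_i \leq R_i$ automatically. The edge case where some $U_i$ is empty means $f$ lands entirely in one side, i.e.\ $X \leq R_j$ for a single $j$, and is handled by a trivial choice for the other summand.

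To verify $X \equiv X_1 \oplus X_2$, I would fix computable surjections $h_i : \omega \to U_i$. For the reduction $X \leq X_1\oplus X_2$, given input $x$ compute $f(x)$ and inspect its parity: if $f(x)=2m$ then $m \in U_1$, so search for the least $k$ with $h_1(k)=m$ (guaranteed to halt since $m\in U_1$) and output $2k$; symmetrically for $f(x)=2m+1$ using $h_2$. For the reverse reduction $X_1\oplus X_2 \leq X$, given $2n$ I would compute $h_1(n)$ and then search for any $a$ with $f(a) = 2h_1(n)$ (such an $a$ exists by definition of $U_1$), and output $a$; analogously on the odd side with $h_2$. Correctness within a single parity class follows by chasing the definitions of $\restriction$ and $\oplus$ and using that $f$ is a reduction.

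The main technical content, beyond bookkeeping, is noticing that the preimages $f^{-1}(2\omega)$ and $f^{-1}(2\omega+1)$ are $X$-saturated: if $x \rel{X} y$ then $f(x)$ and $f(y)$ are $R_1\oplus R_2$-equivalent, hence have equal parity. This saturation is the key point that makes the cross-parity cases work out for both reductions: $X$-equivalent inputs never straddle the two sides of the decomposition, and cross-parity outputs in $X_1\oplus X_2$ (always non-equivalent there) correspond precisely to non-$X$-equivalent inputs. The only real obstacle is this verification; everything else is a mechanical combination of the definitions of restriction, uniform join, and the reduction witness.
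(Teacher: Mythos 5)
Your proof is correct and is essentially the same decomposition the paper uses: the paper sets $W=\range(f)$, observes $X\equiv(R_1\oplus R_2)\restriction W$ by Remark~\ref{rem:restrictions}, and then invokes the identity $(R_1\oplus R_2)\restriction W\equiv R_1\restriction V_1\oplus R_2\restriction V_2$ with $V_1,V_2$ exactly your $U_1,U_2$; your write-up simply unfolds the verification that this last identity compresses. (One small remark: the case where some $U_i$ is empty is not entirely ``trivial'' in the way you suggest --- if $X$ is self-full, e.g.\ dark, one cannot take $X_1\cong\Id_1$ since $X\oplus\Id_1\not\equiv X$ --- but the paper's own one-line proof glosses over the same edge case, since $R_i\restriction V_i$ is only defined for nonempty $V_i$, so this is not a defect of your argument relative to the source.)
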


\begin{proof}
If $f$ is a reduction for $X\leq R_1\oplus R_2$ and $W=\range(f)$ then
$X\equiv (R_1\oplus R_2)\restriction W\equiv R_{1}\restriction V_{1} \oplus
R_{2}\restriction V_{2}$, where $V_{1}=\{x\mid  2x \in W\}$ and
$V_{2}=\{x\mid 2x+1 \in W\}$.
\end{proof}

The third construction is described in the following definition.

\begin{defn}\label{fct:quotients}
Let $W \subseteq \omega^2$. We denote by $E_{/W}$ the equivalence relation
generated by the set of pairs $E\cup W$. If $W$ is a singleton, say
$W=\{(x,y)\}$, then we simply write $E_{/(x,y)}$ instead of $E_{/\{(x,y)\}}$.
\end{defn}

Notice that if $W$ is c.e. and $E$ is a ceer then $E_{/W}$ is a ceer as well.

We will also make use of the following easy facts about ceers:

\begin{fact}\label{fact:trivial}
The following hold:
\begin{enumerate}
  \item For every  pair $S,T$ of ceers and $k\geq 0$, $S\equiv T$ if and
      only if $S\oplus \Id_k \equiv T\oplus \Id_k$
      (where for any equivalence relation $R$, we let $R\oplus \Id_0=R$).
  \item If $f$ is a reduction from $R$ to $S$ which omits exactly $k$
      $S$-classes  then $R\oplus \Id_k \equiv
      S$.
   \item If $W$ is a non-empty
   c.e. set missing exactly $k$ equivalence classes of a
       ceer $R$ then $R\restriction W \oplus \Id_{k} \equiv R$.
      \end{enumerate}
\end{fact}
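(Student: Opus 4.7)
The plan is to treat the three items in reverse order. Item~(3) reduces directly to~(2): by Remark~\ref{rem:restrictions}, any computable surjection $h\colon\omega\twoheadrightarrow W$ is a reduction $R\restriction W\leq R$, and its range $W$ meets every $R$-class except precisely the $k$ classes disjoint from $W$; thus $h$ omits exactly $k$ classes, and~(2) delivers $R\restriction W\oplus\Id_k\equiv R$.

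For item~(2), I pick representatives $y_0,\dots,y_{k-1}$ of the omitted $S$-classes and define $g\colon R\oplus\Id_k\to S$ by $g(2x)=f(x)$ and $g(2i+1)=y_{i\bmod k}$. Since the $y_i$'s are pairwise $S$-inequivalent and each lies off $\range(f)$, this is a reduction whose image meets every $S$-class; the reverse reduction $S\leq R\oplus\Id_k$ is then a routine c.e.\ search for a preimage of each class under $g$.

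For item~(1), only the reverse direction requires work; I plan to induct on $k$, reducing to the single cancellation $S\oplus\Id_1\equiv T\oplus\Id_1\Rightarrow S\equiv T$. Given a reduction $f\colon S\oplus\Id_1\to T\oplus\Id_1$, I split on where the image $f(1)$ of the unique $\Id_1$-class lands. If $f(1)$ is odd, then $2x\not\sim 1$ in $S\oplus\Id_1$ forces every $f(2x)$ to be even, so $x\mapsto f(2x)/2$ directly witnesses $S\leq T$. If $f(1)=2t$ is even, then the c.e.\ set $C=\{x:f(2x)\text{ is odd}\}$ is either empty or a single $S$-class (all its members map to the same class of $T\oplus\Id_1$), and its membership is decidable from $f$; I reroute those $x$'s to $t$ by setting $g(x)=f(2x)/2$ off $C$ and $g(x)=t$ on $C$. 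A symmetric argument yields $T\leq S$.

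The main obstacle is this ``even $f(1)$'' subcase: one must check that rerouting $C$ to the chosen $T$-element does not collapse $S$-inequivalent inputs to the same $T$-class. The key observation is that for $x_2\notin C$, the inputs $1$ and $2x_2$ are $S\oplus\Id_1$-inequivalent, so $f(1)=2t$ and $f(2x_2)$ sit in distinct $T\oplus\Id_1$-classes (both now in the $T$-part), giving $t\not\rel{T}f(2x_2)/2$. Once this point is secured, the remaining cases of the verification are immediate.
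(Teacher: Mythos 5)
Your proposal is correct, and for item~(3) it matches the paper exactly: both you and the authors derive~(3) from~(2) by observing that the inclusion $R\restriction W\leq R$ omits exactly the $k$ classes disjoint from $W$. For items~(1) and~(2), however, the paper gives no internal argument at all --- it simply cites Lemma~2.1 and Lemma~2.8 of Andrews--Sorbi --- whereas you supply self-contained proofs. Your argument for~(2) (extend $f$ by sending the $k$ odd classes to representatives of the omitted $S$-classes, then recover $S\leq R\oplus\Id_k$ by an effective search through the onto-the-classes reduction) is the standard one and is correct, modulo the degenerate $k=0$ case where the convention $R\oplus\Id_0=R$ makes the statement trivial. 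Your argument for~(1) --- reduce by induction to the cancellation $S\oplus\Id_1\equiv T\oplus\Id_1\Rightarrow S\equiv T$, then split on the parity of $f(1)$, noting in the even case that $C=\{x: f(2x)\text{ odd}\}$ is a decidable, $S$-saturated set consisting of at most one $S$-class which can be safely rerouted to $t=f(1)/2$ --- is also correct; the key inequivalence you check (that $t\not\rel{T}f(2x_2)/2$ for $x_2\notin C$) is exactly what is needed. The net effect is that your write-up is more elementary and more self-contained than the paper's, which defers the real work to the cited reference; both reach the same conclusions.
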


\begin{proof}
The first item is essentially \cite[Lemma~2.1]{Andrews-Sorbi}. The second
item comes from Lemma~2.8 in \cite{Andrews-Sorbi}; the third item follows
from the previous one and the fact that under the assumptions the inclusion
reduction $R\restriction W  \leq R$ misses exactly $k$ equivalence classes.
\end{proof}

\subsection{Reducibility modulo $\I$}
We recall the following reducibility from \cite{Andrews-Sorbi}.

\begin{defn}
We say $R\leq_\I S$ if there is some $k$ so that $R \leq
S\oplus \Id_k$.

Define $R \equiv_\I S$ if $R \leq_\I S$ and $S
\leq_\I R$. The equivalence class of $R$ under $\equiv_\I$ will be denoted by
$\deg_\I(X)$ and called the \emph{$\I$-degree of $R$}. $\Ceers_{/\I}$ is the
collection of all $\deg_\I(R)$, where $R$ is a ceer.	
\end{defn}

\begin{fact}\label{fct:restrictionI}
If $X\leq_{\I}R_1\oplus R_2$ then there are ceers $X_1\leq R_1$ and $X_2\leq
R_2$ so that $X\equiv_{\I} X_1\oplus X_2$.
\end{fact}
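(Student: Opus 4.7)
The plan is to bootstrap from the non-$\I$ version, Fact \ref{fct:restriction}, by applying it twice.

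First, unwind the hypothesis: $X \leq_\I R_1 \oplus R_2$ means that there is some $k \geq 0$ with $X \leq (R_1 \oplus R_2) \oplus \Id_k$. View this right-hand side as the uniform join of the ceer $R_1 \oplus R_2$ with $\Id_k$ and apply Fact \ref{fct:restriction}: we obtain ceers $Y \leq R_1 \oplus R_2$ and $Z \leq \Id_k$ with $X \equiv Y \oplus Z$. Since $Z \leq \Id_k$, the ceer $Z$ has only finitely many classes, so $Z \equiv \Id_m$ for some $m \leq k$ (with $m = 0$ handled by letting $Z$ be the relation with a single class, per the conventions around $\Id_0$ in Fact \ref{fact:trivial}).

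Next, apply Fact \ref{fct:restriction} a second time to $Y \leq R_1 \oplus R_2$: we get ceers $X_1 \leq R_1$ and $X_2 \leq R_2$ with $Y \equiv X_1 \oplus X_2$. Chaining the equivalences yields $X \equiv (X_1 \oplus X_2) \oplus \Id_m$. From this we immediately read off $X \leq_\I X_1 \oplus X_2$ (witnessed by $k=m$), and conversely $X_1 \oplus X_2 \leq (X_1 \oplus X_2) \oplus \Id_m \equiv X$. Hence $X \equiv_\I X_1 \oplus X_2$, as required.

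The argument is essentially bookkeeping, and the only mild subtlety is keeping track of the $\Id_k$ summand and ensuring that the second application of Fact \ref{fct:restriction} really does give actual $\leq$-reductions to $R_1$ and $R_2$ (not merely $\leq_\I$-reductions). This works because the finite part has already been peeled off in the first application, so the second application takes place entirely within the non-$\I$ reducibility. No edge case poses a genuine obstacle, since Fact \ref{fact:trivial}(1) already handles $\Id_0$ as the additive identity for $\oplus$.
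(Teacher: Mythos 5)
Your proof is correct and follows essentially the same route as the paper's: the paper applies Fact~\ref{fct:restriction} to the three-way join $R_1\oplus R_2\oplus\Id_n$ in one step to obtain $X\equiv X_1\oplus X_2\oplus F$ with $F\leq\Id_n$, whereas you make the two applications explicit by first peeling off the $\Id_k$ summand and then splitting the remaining $Y\leq R_1\oplus R_2$. The parenthetical about $m=0$ is a harmless red herring (a ceer always has at least one class, so $Z\equiv\Id_m$ with $m\geq 1$), and otherwise the bookkeeping is exactly right.
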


\begin{proof}
There is an $n$ so that $X\leq R_1\oplus R_2\oplus \Id_n$. By Fact
\ref{fct:restriction}, this shows that there are $X_1\leq R_1 $ and $X_2\leq
R_2$ and $F\leq \Id_n$ so that $X\equiv X_1\oplus X_2\oplus F$. Then
$X\equiv_{\I} X_1\oplus X_2$.
\end{proof}

We consider the six structures: $\Ceers$, $\Dark$, $\Light$, $\Ceers_{/\I}$,
$\Dark_{/\I}$, $\Light_{/\I}$. For elementary differences between these
classes, and for more on their structural properties, see
\cite{Andrews-Sorbi}. We note:

\begin{lemma}[\cite{Andrews-Sorbi}, Obs. 9.7] \label{lem:I-definability}
In each of $\Ceers$, $\Dark$, and $\Light$, the equivalence relation
$\equiv_\I$, and thus also the partial order $\leq_\I$, is $\emptyset$-definable.
\end{lemma}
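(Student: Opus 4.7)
The plan is to treat the three structures separately, exploiting the already-cited $\emptyset$-definability of $\Id$ and of the three classes $\I, \Light, \Dark$ in $\Ceers$.

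For $\Light$: the key fact is that every light ceer $R$ satisfies $R \oplus \Id_k \equiv R$ for all $k$---a standard property of light ceers, which one proves using that $\Id \leq R$ to reduce $R$ to itself while leaving infinitely many classes free, and then absorbing the $\Id_k$-summand into those free classes (together with Fact~\ref{fct:restriction} applied in the reverse direction). Consequently $\equiv_\I$ restricted to $\Light$ collapses to $\equiv$, which is equality in the quotient structure, giving the trivial definitions $\equiv_\I$ by $x = y$ and $\leq_\I$ by $x \leq y$.

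For $\Ceers$: since $\I$ is definable and forms an initial $\omega$-chain, individual finite ceers $\Id_k$ are definable. The target is a formula of the shape
\[
R \leq_\I S \iff R \leq S \vee \exists F \in \I\,\exists T\bigl(R \leq T \wedge \psi(S, F, T)\bigr),
\]
where $\psi(S, F, T)$ expresses $T \equiv S \oplus F$. To express $\psi$ order-theoretically, I would exploit Fact~\ref{fct:restriction}: every $U \leq S \oplus F$ decomposes as $U_1 \oplus U_2$ with $U_1 \leq S$ and $U_2 \leq F$, and this decomposition property, combined with $T$ being a minimal common upper bound of $\{S, F\}$ enjoying it, pins down $T \equiv S \oplus F$ up to $\equiv$.

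For $\Dark$: since $\I \cap \Dark = \emptyset$, we cannot quantify over $\I$ internally. Instead I would characterize the successor operation $\sigma\colon R \mapsto R \oplus \Id_1$ intrinsically in $\Dark$---as the unique minimal cover of $R$ in $\Dark$ distinguished by an order-theoretic property reflecting that it differs from $R$ by a single extra class. Once $\sigma$ is $\emptyset$-definable, one has $R \equiv_\I S \iff \exists n, m\,(\sigma^n(R) \equiv \sigma^m(S))$, which is first-order since $\sigma^n$ is captured by finite chains of $\sigma$-steps. This dark case is the principal obstacle: isolating $R \oplus \Id_1$ among the (possibly many) minimal covers of a dark degree using only the poset structure on $\Dark$ will draw on the structural analysis in \cite{Andrews-Sorbi}, in particular the absence of joins for incomparable dark pairs (\cite[Theorem~7.18]{Andrews-Sorbi}) and the characterization of minimal covers developed there.
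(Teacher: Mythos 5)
The paper does not actually prove this lemma---it cites it as Observation 9.7 of \cite{Andrews-Sorbi}---so there is nothing in this paper to compare your argument against directly. However, your proposal has several genuine gaps. The most serious is your treatment of $\Light$: the claim that every light ceer $R$ satisfies $R \oplus \Id_k \equiv R$ is false, since there exist light self-full ceers (this is implicitly used later in this very paper, in the proof of Lemma~\ref{namesExistLight}, which invokes self-full $\I$-strongly minimal covers above light minimal degrees, necessarily light). Consequently $\equiv_\I$ does \emph{not} collapse to $\equiv$ on $\Light$, and your ``trivial definition'' for that case is wrong. For $\Ceers$, your $\psi(S,F,T)$ cannot pin down $T \equiv S \oplus F$: since $F \in \I$ is a finite ceer it reduces to $S$ whenever $S$ has enough classes, so the minimal common upper bound of $\{S,F\}$ is just $S$, which also trivially enjoys the decomposition property you cite---nothing forces $T$ strictly above $S$. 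And for $\Dark$, while your plan to make the successor $\sigma(R)=R\oplus\Id_1$ definable is sound (Fact~\ref{fact:sf-useful}(1) shows $\sigma(R)$ is the least degree strictly above $R$ when $R$ is self-full, with no need to ``isolate'' it among competing minimal covers), the concluding formula $\exists n,m\,(\sigma^n(R)\equiv\sigma^m(S))$ is \emph{not} first-order: $n,m$ range over natural numbers external to the structure, and ``finite chains of $\sigma$-steps of unbounded length'' is an infinite disjunction, not a formula of the language of posets. You never replace this with an actual first-order condition, so the definability claim is unproved.

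What you are missing is a single uniform first-order characterization that sidesteps iteration entirely, such as: $R \leq_\I S$ if and only if $\forall T\,(S < T \Rightarrow R \leq T)$. The forward direction is immediate from Fact~\ref{fact:sf-useful}(1) (if $S < T$ then $\sigma^k(S) \leq T$ by induction, and $R \leq \sigma^k(S)$ for some $k$). The converse splits into cases: if $S$ is self-full and not universal, take $T = \sigma(S)$; if $S$ is universal, the claim is vacuous; if $S$ is not self-full, then $R \leq_\I S$ reduces to $R \leq S$, and one rules out $R \not\leq S$ by choosing a dark minimal $D \not\leq R\oplus S$ (using the cone-avoidance \cite[Theorem~3.3]{Andrews-Sorbi}) and checking $R \not\leq S\oplus D$ via Fact~\ref{fct:restriction} and Observation~\ref{inseparabilityDarkMinimal}, yielding a witness $T=S\oplus D > S$. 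This formula is uniformly correct in $\Ceers$, $\Dark$, and $\Light$, which also makes the three-way case split in your proposal unnecessary.
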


Notice also:

\begin{lemma}\label{lem:one-way}
If $\mathcal{D}$ is any of the six structures $\Ceers$, $\Dark$, $\Light$,
$\Dark_{/\I}$, $\Light_{/\I}$, and $\Ceers_{/\I}$ then
$\Th(\mathcal{D})\leq_1 \Th^1(\mathbb{N})$.
\end{lemma}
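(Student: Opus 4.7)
The plan is to equip each of the six structures with an arithmetical presentation and then translate first-order formulas in the language of posets into arithmetical sentences. Fix a standard enumeration $(W_e)_{e\in\omega}$ of c.e.\ sets, and for each $e$ let $R_e$ be the c.e.\ binary relation $\{(x,y) : \langle x,y\rangle \in W_e\}$. The index set $U = \{e : R_e \text{ is an equivalence relation}\}$ is $\Pi^0_2$, since reflexivity, symmetry, and transitivity of $R_e$ are each $\Pi^0_2$ in $e$. For $e, e' \in U$, the condition $R_e \leq R_{e'}$ says that some total computable $\varphi_i$ provides a reduction; totality is $\Pi^0_2$, and the reduction clause is a universally quantified Boolean combination of $\Sigma^0_1$ predicates, so $\leq$ is $\Sigma^0_3$ and in particular arithmetical. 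Consequently $\equiv$ is arithmetical as well.

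Next I would observe that the classes $\I$, $\Light$, and $\Dark$ cut out arithmetical subsets of $U$: $\I$ is $\Sigma^0_2$ (the ceer has finitely many classes), $\Light$ is $\Sigma^0_3$ (an infinite c.e.\ transversal exists), and $\Dark = U \setminus (\I \cup \Light)$. The $\I$-reducibility is also arithmetical, since $R \leq_\I S$ iff $\exists k (R \leq S \oplus \Id_k)$ and an index for $S \oplus \Id_k$ is uniformly computable in $k$ and in the index of $S$; the corresponding equivalence $\equiv_\I$ is thus arithmetical as well.

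Given these arithmetical presentations, the 1-reduction is obtained by the usual effective translation. Given a sentence $\phi$ in the language of posets, I would produce an arithmetical sentence $\hat\phi$ by relativizing each quantifier to the arithmetical index set naming the universe of $\mathcal{D}$ (namely $U$ for $\Ceers$ and $\Ceers_{/\I}$, $U \cap \Dark$ for $\Dark$ and $\Dark_{/\I}$, and $U \cap \Light$ for $\Light$ and $\Light_{/\I}$), replacing equality by the appropriate arithmetical equivalence ($\equiv$ for the three non-quotient structures and $\equiv_\I$ for the three quotient structures), and replacing $\leq$ by the corresponding arithmetical order ($\leq$ or $\leq_\I$). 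Then $\phi$ holds in $\mathcal{D}$ iff $\hat\phi$ is true in $(\mathbb{N},+,\times)$, the map $\phi \mapsto \hat\phi$ is computable, and a standard padding argument upgrades this many-one reduction to a 1-1 one. There is no real obstacle: the work is purely bookkeeping of arithmetical complexity, and for the quotient structures one simply quantifies over $U$ (or its $\Dark$/$\Light$ part) and uses $\equiv_\I$ in place of equality, avoiding the need to select canonical representatives of $\equiv_\I$-classes.
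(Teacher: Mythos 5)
Your proposal is correct and follows essentially the same route as the paper: it produces an arithmetical presentation of the degree structure (an indexing of ceers with arithmetical universe, order, and equivalence) and then effectively translates poset sentences into arithmetical sentences. The only cosmetic difference is that after handling $\Ceers$ the paper dispatches the other five structures in one stroke via their $\emptyset$-interpretability in $\Ceers$ (using \cite[Corollary~8.1]{Andrews-Sorbi} and Lemma~\ref{lem:I-definability}), whereas you build a presentation for each structure directly by noting that $\I$, $\Light$, $\Dark$, and $\leq_\I$ are all arithmetical; both are fine.
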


\begin{proof}
Let $\{R_z \mid z \in \omega\}$ be the indexing of ceers defined in
\cite{Andrews-et-al}, and let $\nu: \omega \longrightarrow \Ceers$ be given
by $\nu(x)=\deg(R_x)$. Then it is easy to see that the relations (in $x,y$)
$\nu(x)=\nu(y)$ and $\nu(x)\leq \nu(y)$ are arithmetical, so that one can
effectively translate sentences on degrees into first order arithmetical
sentences, showing $\Th(\Ceers)\leq_1 \Th^1(\mathbb{N})$. Since each of the
other five structures are interpretable without parameters in $\Ceers$ by \cite[Corollary 8.1]{Andrews-Sorbi} and Lemma \ref{lem:I-definability}, this shows that their
theories also are $\leq_1 \Th^1(\mathbb{N})$.
\end{proof}

For each of the structures of degrees of ceers mentioned in
Lemma~\ref{lem:one-way}, we show that there is a copy of
$(\mathbb{N},+,\cdot)$ which is interpreted without parameters in the structure, and thus by
Lemma~\ref{lem:one-way}, the theory is $1$-equivalent to true arithmetic.  In
view of Lemma~\ref{lem:I-definability}, to yield the result for all six structures we need only find
an interpreted copy of $(\mathbb{N},+,\cdot)$ in the three structures
$\Dark_{/\I}$, $\Light_{/\I}$, and $\Ceers_{/\I}$.

\subsection{Self-full ceers}
The following obvious facts about dark ceers hold:

\begin{fact}\label{fct:dark-closure}
The following hold:
\begin{enumerate}
\item if $S$ is not finite, $S\leq_\I T$ and $T$ is dark then so is $S$;
\item if $R$ is dark and $S$ is either finite or dark then $R\oplus S$ is dark;
\item if $R$ is dark then so is any
$R_{/ W}$. In particular, if $R,S$ are dark then so is any $(R\oplus
S)_{/(2x,2y+1)}$.
\end{enumerate}
\end{fact}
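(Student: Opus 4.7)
The plan is to treat each clause by contradiction, using only the characterization that a non-finite ceer is dark precisely when it admits no infinite c.e. transversal (equivalently, $\Id \not\leq R$).

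For clause~(1), I would assume toward contradiction that $S$ is light and fix both an infinite c.e. transversal $W$ for $S$ and a reduction $f : S \leq T \oplus \Id_k$ witnessing $S \leq_\I T$. Since the odd half of $T\oplus\Id_k$ is a copy of $\Id_k$ with only $k$ classes, only finitely many elements of $f(W)$ can land in the odd half; the remaining infinite subset of $f(W)$ lies in the even half and, after halving, yields an infinite c.e. transversal for $T$, contradicting that $T$ is dark.

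For clause~(2), $R\oplus S$ is infinite because $R$ is, so the task reduces to excluding lightness. Given a putative infinite c.e. transversal $W$ for $R\oplus S$, I would split $W = W_e \sqcup W_o$ by parity; the same halving argument as above shows that $W_e$ infinite forces $R$ to be light, while $W_o$ infinite forces $S$ to be light. When $S$ is finite, $W_o$ meets only finitely many $(R\oplus S)$-classes, so $W_o$ must be finite and $W_e$ infinite, contradicting darkness of $R$; when $S$ is dark, whichever of the two halves is infinite contradicts darkness of $R$ or of $S$.

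For clause~(3), the essential observation is that $R \subseteq R_{/W}$ as sets of pairs, so every c.e. transversal for $R_{/W}$ is automatically a c.e. transversal for $R$. Hence, as soon as $R_{/W}$ has infinitely many classes, darkness of $R$ propagates to $R_{/W}$. The ``in particular'' consequence then follows by combining this with~(2): $R\oplus S$ is dark, and identifying the single pair $(2x,2y+1)$ reduces the number of equivalence classes by at most one, so the quotient remains infinite and the first half of~(3) applies. None of these steps is a genuine obstacle; the only piece of care is the finiteness check in the general form of~(3), which is automatic for the specific single-pair quotient that will be used downstream.
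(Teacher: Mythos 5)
Your proof is correct, and it takes a more self-contained route than the paper: for items (1) and (2), the paper simply cites Observations~6.3 and~3.2 of \cite{Andrews-Sorbi}, whereas you unpack these from the definition of lightness (the existence of an infinite c.e.\ transversal), using the standard counting-out-of-$\Id_k$ and parity-splitting arguments. Both approaches are legitimate; yours has the advantage of being verifiable within the paper itself, while the paper's citation keeps the exposition short. Your argument for item (3), that $R \subseteq R_{/W}$ so any c.e.\ transversal for $R_{/W}$ is one for $R$, is the intended ``follows straight from the definitions'' observation, and you are right to flag the finiteness issue: as literally stated, $R_{/W}$ could collapse to finitely many classes (e.g.\ $W = \omega^2$), in which case it is finite rather than dark, so the general clause tacitly assumes $R_{/W}$ is infinite; this is automatic for the single-pair quotient $(R\oplus S)_{/(2x,2y+1)}$ that the paper actually uses. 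One tiny imprecision worth noting: in item (1) you say ``only finitely many elements of $f(W)$ can land in the odd half''; more precisely, at most $k$ elements of $W$ map into the odd half, since distinct elements of $W$ have pairwise inequivalent images and $\Id_k$ has only $k$ classes --- the conclusion is the same.
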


\begin{proof}
Item (1)  is \cite[Observation~6.3]{Andrews-Sorbi}; item (2) is
\cite[Observation~3.2]{Andrews-Sorbi}; item (3) follows straight from the
definitions.
\end{proof}

\begin{defn}
A ceer $R$ is self-full if $R\oplus \Id_1\not\leq R$. Equivalently
(\cite[Observation~4.2]{Andrews-Sorbi}), $R$ is self-full if whenever
$\phi:R\rightarrow R$ is a reduction, $\phi$ is onto the classes of $R$, i.e., the range of $\phi$ intersects every $R$-class.
\end{defn}

The following fact collects useful properties of self-full ceers and the
$\oplus \Id_1$ operation:

\begin{fact}\label{fact:sf-useful}
The following hold:
\begin{enumerate}
  \item  For any ceers $R$ and $S$, if $S< R \oplus \Id_1$ then $S\leq R$. For any ceers $R$ and $S$, if $R< S$ then $R \oplus \Id_1 \leq S$.
  \item Every dark ceer is self-full.
\end{enumerate}

\end{fact}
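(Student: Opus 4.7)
The plan is to prove (1) in two pieces, in the reverse of the stated order, and then derive (2) by extracting a transversal from a failure of self-fullness.

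For the second half of (1), assume $R<S$ via a reduction $f\colon R\to S$. By Fact~\ref{fact:trivial}(2) (with $k=0$), if $f$ omitted no $S$-classes we would have $R\equiv S$; so $f$ must omit some $S$-class, and we pick $a$ in an omitted class. Define $g(2x)=f(x)$ and $g(2x+1)=a$: even--even pairs behave as under $f$, all odd--odd pairs collapse into $[a]_S$, and any even--odd pair is separated because $a$ is $S$-inequivalent to every element of $\range(f)$. Thus $g$ reduces $R\oplus\Id_1$ to $S$.

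For the first half of (1), suppose $S<R\oplus\Id_1$. Applying Fact~\ref{fct:restriction} produces ceers $X_1\leq R$ and $X_2\leq\Id_1$ with $S\equiv X_1\oplus X_2$, after separately handling the degenerate cases where one of the witnessing sets $V_1,V_2$ is empty (which already deliver $S\leq R$ or $S\leq\Id_1\leq R$). In the remaining case $X_2\equiv\Id_1$, so $S\equiv X_1\oplus\Id_1$. If we had $X_1\equiv R$, Fact~\ref{fact:trivial}(1) would force $S\equiv R\oplus\Id_1$, contradicting strict inequality; hence $X_1<R$, and the piece of (1) just proved yields $X_1\oplus\Id_1\leq R$, so $S\leq R$.

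For (2), assume toward a contradiction that some dark ceer $R$ is not self-full, witnessed by a reduction $f\colon R\oplus\Id_1\to R$. Set $c:=f(1)$; because $f$ separates the unique odd class from every even class, $[c]_R$ lies outside the range of $g(n):=f(2n)$, and $g$ is itself a reduction $R\to R$. Iterate: $c_0:=c$ and $c_{n+1}:=g(c_n)$. The classes $[c_n]_R$ are pairwise distinct, for if $c_i\rel{R}c_j$ with $i<j$, then using that $g^i$ is a reduction we get $c_0\rel{R}c_{j-i}=g(c_{j-i-1})\in\range(g)$, contradicting $[c]_R\notin\range(g)$. Hence $\{c_n:n\in\omega\}$ is an infinite c.e.\ transversal for $R$, so $R$ is light --- a contradiction. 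The only real subtlety we anticipate is the case split in the first half of (1), specifically confirming that the ``degenerate'' outcomes of Fact~\ref{fct:restriction} (empty $V_1$ or $V_2$) still conclude $S\leq R$; the rest is direct manipulation of reductions.
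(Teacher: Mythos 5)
Your proof is correct. Note that the paper does not actually prove this fact; it simply cites Lemmas~4.5 and~4.6 of Andrews--Sorbi, so there is no internal argument to compare against, and your write-up supplies the missing details.

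A few remarks on your argument, all favorable. The second half of (1) is the natural construction: since a reduction $f\colon R\to S$ witnessing $R<S$ cannot be onto the $S$-classes (else $R\equiv S$ by Fact~\ref{fact:trivial}(2) with $k=0$), you can send the lone $\Id_1$-class to an omitted $S$-class. In the first half of (1) you correctly flag the only real subtlety, namely that Fact~\ref{fct:restriction} presupposes the restricting sets are nonempty; the two degenerate cases each give $S\leq R$ directly, and in the main case $X_2\equiv\Id_1$ forces $X_1<R$ (since $X_1\equiv R$ would give $S\equiv R\oplus\Id_1$), after which the already-proved clause of (1), applied with the roles of $R$ and $S$ reversed, finishes. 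For (2), the argument that a reduction $f\colon R\oplus\Id_1\to R$ yields a reduction $g\colon R\to R$ omitting $[c]_R$, and that the orbit $c, g(c), g^2(c),\dots$ is a c.e.\ transversal, is exactly the standard ``iterate a non-onto self-reduction'' move; the verification that the $c_n$ lie in distinct classes using that $g^i$ is itself a reduction is the right way to make it rigorous. No gaps.
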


\begin{proof}
Item (1) is \cite[Lemma~4.5]{Andrews-Sorbi}, and item (2) is
\cite[Lemma~4.6]{Andrews-Sorbi}.
\end{proof}

It is also useful to note:

\begin{observation}\label{obs:not-omitting}
If $R$ is self-full and $R\equiv E$, then any reduction $\phi:R\rightarrow E$
must be onto the classes of $E$.
\end{observation}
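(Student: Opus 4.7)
The plan is to reduce the statement to the defining property of self-fullness by composing with a reduction in the other direction. Since $R \equiv E$, pick any reduction $\psi : E \to R$ witnessing $E \leq R$. Then the composition $\psi \circ \phi : R \to R$ is again a reduction (the composition of two reductions). By the equivalent formulation of self-fullness cited before Fact~\ref{fact:sf-useful}, the range of $\psi \circ \phi$ must intersect every $R$-class.

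The key observation is then that a reduction is injective on classes: if $e_1 \not\rel{E} e_2$ then, because $\psi$ is a reduction, $\psi(e_1) \not\rel{R} \psi(e_2)$, so distinct $E$-classes are sent to distinct $R$-classes by $\psi$. Suppose for contradiction that $\phi$ omits some $E$-class $[c]_E$. Then for every $x \in \omega$ we have $\phi(x) \not\rel{E} c$, and applying the reduction property of $\psi$ gives $\psi(\phi(x)) \not\rel{R} \psi(c)$ for every $x$. Thus $\psi \circ \phi$ misses the $R$-class $[\psi(c)]_R$, contradicting self-fullness.

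The argument is short and the only potential subtlety is verifying that $\psi$ really does separate classes, which is immediate from the definition of a reduction. I expect no real obstacle; this is essentially a one-line observation once one writes down $\psi \circ \phi$ and invokes the self-full property in the form given in the preceding paragraph.
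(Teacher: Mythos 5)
Your proof is correct and is essentially the paper's own argument: compose $\phi$ with a reduction $\psi : E \to R$ and invoke self-fullness of $R$ on $\psi\circ\phi$. You have simply spelled out the (easy but genuine) verification that omitting an $E$-class under $\phi$ forces $\psi\circ\phi$ to omit an $R$-class, which the paper leaves implicit.
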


\begin{proof}
Consider the pair of reductions $R\leq E \leq R$. If we were to have a
reduction of $R$ into $E$ which is not onto the classes of $E$, then the
composition would be a reduction of $R$ to itself which is not onto the
classes of $R$, contradicting $R$ being self-full.
\end{proof}

\section{Minimal classes in $\Dark_{/\I}$}\label{sct:minimalIdark}
We now proceed with examining some facts about dark minimal $\I$-degrees, on
which we will code models of arithmetic. There are two types of minimal
degrees in $\Dark_{/\I}$. The first is the $\I$-degree of a ceer $D$ which is
a dark minimal ceer. The second is an $\I$-degree which, as a class of
$\equiv$-degrees, has the order type of $\mathbb{Z}$ and bounds no other
non-zero $\I$-degree. Indeed, suppose that $R$ is a dark ceer, and
$\deg_{\mathcal{I}}(R)$ is minimal. First of all notice that the ceers
$\{R_k\mid  k \in \omega\}$, where $R_k=R\oplus \Id_k$, give rise, as $k$
strictly increases, to a strictly increasing sequence of $\equiv$-degrees
within $\deg_{\mathcal{I}}(R)$ (where we let $R_0=R$), which comprises
all $\equiv$-degrees within $\deg_{\mathcal{I}}(R)$ that are greater than
$\deg(R)$: this easily follows from
Facts~\ref{fct:dark-closure}~and~\ref{fact:sf-useful}. On the other hand, if
$S< R$ in $\deg_{\mathcal{I}}(R)$ then there is $k\ge 1$ such that $R\equiv S
\oplus \Id_k$, and all $T$ such that $R\equiv T\oplus \Id_k$ are $\equiv S$
(using Fact~\ref{fact:trivial})(1), so for every $k\ge 1$ for which there is
an $S$ as above we can choose such a (unique up to $\equiv$) $S$ and define
$R_{-k}=S$. If there is no $\leq$-minimal element in $\deg_{\mathcal{I}}(R)$
then all ceers in $\deg_{\mathcal{I}}(R)$ which are smaller than $R$ are
$\equiv$ to some $R_{-k}$, which form a chain $\cdots < R_{-k} < \cdots
<R_{-1}$; thus the $\mathcal{I}$-degree $\deg_{\mathcal{I}}(R)$ consists of a
$\mathbb{Z}$-chain of $\equiv$-degrees, namely the degrees of
\[
\cdots < R_{-2} < R_{-1} < R < R_{1} <R_{2} < \cdots.
\]

\begin{defn}
A ceer in this second type of minimal $\I$-degree we call a
\emph{$\mathbb{Z}$-dark minimal ceer}.
\end{defn}

\begin{example}
Examples of dark minimal degrees are from Theorem~4.10 and~Corollary~4.15 of
\cite{Andrews-Sorbi}. As to examples of $\mathbb{Z}$-dark minimal ceers, they
come from Theorem~4.11 and~Corollary~4.15 of \cite{Andrews-Sorbi}: indeed,
one can use the proof of Theorem~4.11, but without coding any ceer $A$, to
build $\leq_\I$-minimal dark ceers with finite classes. That the construction
in Theorem 4.11 suffices is justified by Lemma
\ref{lem:characterization-I-minimality} below.
\end{example}

The following fact was shown in \cite{KazakhPaper}, the right-to-left
implication being already in \cite[Lemmas~3.4 and~3.5]{Andrews-Sorbi}:

\begin{fact}\label{darkMinimalCombinatorially}
$R$ is a dark minimal ceer if and only if $R$ has infinitely many classes and
every c.e. set $W$ which intersects infinitely many $R$-classes intersects
every $R$-class.
\end{fact}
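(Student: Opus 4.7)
The plan is to prove each direction separately, with the self-fullness of dark ceers (Fact~\ref{fact:sf-useful}(2)) as the key structural ingredient.

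For the forward direction, suppose $R$ is a dark minimal ceer. Since $R$ is dark it has infinitely many classes. Suppose for contradiction that some c.e.\ set $W$ intersects infinitely many $R$-classes but misses at least one. Then $R \restriction W$ has infinitely many classes (hence is not finite), and $R \restriction W \leq R$ by Remark~\ref{rem:restrictions}. By minimality of $R$, we must have $R \restriction W \equiv R$, so there is a reduction $g : R \to R \restriction W$. Composing $g$ with the canonical inclusion reduction $h : R \restriction W \to R$ (whose range lies in $W$) yields a reduction $R \to R$ whose image lies in $W$ and therefore misses at least one $R$-class, contradicting the self-fullness of $R$.

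For the backward direction, assume $R$ has infinitely many classes and every c.e.\ set intersecting infinitely many $R$-classes in fact intersects every $R$-class. To see $R$ is not light: were $W$ an infinite c.e.\ transversal for $R$, then $W$ would intersect infinitely many classes, hence every class by hypothesis; but then for any $w_0 \in W$, the set $W \setminus \{w_0\}$ is a c.e.\ set still intersecting infinitely many classes yet missing the class of $w_0$ (since $W$ is a transversal, $w_0$ is the unique member of $W$ in its class), contradicting the hypothesis. Combined with having infinitely many classes, this gives that $R$ is dark. For minimality, suppose $S \leq R$ via a reduction $f$ and set $V = \range(f)$, so $S \equiv R \restriction V$ by Remark~\ref{rem:restrictions}. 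If $V$ intersects only finitely many $R$-classes then $S$ is finite. Otherwise $V$ intersects every $R$-class by hypothesis, and then $R \leq R \restriction V$ via the computable map sending each $n$ to the first $x$ found with $h(x) \rel{R} n$, where $h : \omega \to V$ is a fixed computable surjection; the search terminates for every $n$ because $V$ hits the class of $n$ and $R$ is c.e. Combined with the inclusion reduction $R \restriction V \leq R$, this gives $S \equiv R$, establishing minimality.

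The main obstacle is the forward direction: a priori one must treat separately the cases where $W$ misses finitely many versus infinitely many $R$-classes, and the clean uniform approach is via the composed self-reduction argument above. If one instead tried to invoke Fact~\ref{fact:trivial}(3) directly to obtain $R \oplus \Id_k \equiv R$ and so contradict self-fullness, that would only cover the case of finitely many missed classes.
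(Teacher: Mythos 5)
Your proof is correct. Note that the paper itself does not prove Fact~\ref{darkMinimalCombinatorially}; it simply cites \cite{KazakhPaper} for the full equivalence and \cite[Lemmas~3.4 and~3.5]{Andrews-Sorbi} for the right-to-left implication, so there is no in-paper argument to compare against. Your forward direction is the natural one: restriction to $W$ stays $\leq R$, cannot be light (below a dark ceer) or finite (infinitely many classes hit), so dark minimality forces $R\restriction W \equiv R$, and composing $R \leq R\restriction W$ with the inclusion yields a self-reduction of $R$ whose range lies inside $W$ and therefore omits a class, contradicting self-fullness (Fact~\ref{fact:sf-useful}(2)). Your backward direction is also right: the transversal argument via $W\setminus\{w_0\}$ rules out lightness, and for minimality the surjection $h:\omega\to V$ and the search ``first $x$ with $h(x)\rel{R} n$'' produce the needed reduction $R\leq R\restriction V$ whenever $V$ hits all classes. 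Your closing remark is a fair observation: invoking Fact~\ref{fact:trivial}(3) would require $W$ to miss only finitely many classes, whereas the composed-self-reduction argument handles the finite and infinite cases uniformly. One small stylistic point: in the darkness argument you do not actually need to first conclude that $W$ hits every class before passing to $W\setminus\{w_0\}$; it suffices that an infinite transversal already hits infinitely many classes, so the contradiction is obtained by applying the hypothesis once, to $W\setminus\{w_0\}$.
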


An easy consequence of this fact is the following:

\begin{observation}\label{inseparabilityDarkMinimal}
If $R$ is a dark minimal ceer, then every pair of classes $[a]_R \neq [b]_R$
are computably inseparable. As a consequence, if $R$ is a dark minimal ceer
and $R\leq_\I S$ then $R\leq S$.
\end{observation}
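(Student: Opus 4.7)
The plan is to derive both assertions from Fact \ref{darkMinimalCombinatorially}, which says that for a dark minimal ceer $R$, every c.e.\ set meeting infinitely many $R$-classes meets every $R$-class.

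For the inseparability claim, I would argue by contradiction: suppose $C$ is a computable set with $[a]_R\subseteq C$ and $[b]_R\cap C=\emptyset$. Then $C$ is a c.e.\ set disjoint from $[b]_R$, so Fact \ref{darkMinimalCombinatorially} forces $C$ to meet only finitely many $R$-classes. By symmetry, $\overline{C}$, which is also c.e.\ and disjoint from $[a]_R$, meets only finitely many $R$-classes. But $C\cup\overline{C}=\omega$ meets all (infinitely many) $R$-classes, which is the desired contradiction.

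For the second assertion, assume $R\leq S\oplus \Id_k$ via a computable reduction $f$ (the case $k=0$ is immediate). I would partition $\omega$ into the computable sets $W_E=\{x: f(x)\text{ is even}\}$ and, for each $i<k$, $W_i=\{x: f(x)=2u+1\text{ with }u\equiv i\pmod{k}\}$. Since each $(S\oplus\Id_k)$-class lies either entirely in the evens or entirely in a single $\Id_k$-residue on the odds, and $f$ is a reduction, each of $W_E,W_0,\ldots,W_{k-1}$ is a union of $R$-classes. Fact \ref{darkMinimalCombinatorially} then forces each of these $R$-saturated c.e.\ sets either to meet only finitely many $R$-classes or to equal $\omega$; since they partition $\omega$ and $R$ has infinitely many classes, exactly one of them must equal $\omega$. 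If some $W_i=\omega$, then $f$ lands entirely in a single $(S\oplus\Id_k)$-class, collapsing $R$ to one class and contradicting darkness; hence $W_E=\omega$, and the map $x\mapsto f(x)/2$ is a reduction $R\leq S$.

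The main conceptual point is recognizing that the rigidity provided by Fact \ref{darkMinimalCombinatorially} forces one preimage piece to absorb all of $\omega$; once that dichotomy is in place, the only mild obstacle is verifying that the pieces $W_E,W_i$ are actually $R$-saturated, which follows immediately from the block structure of $(S\oplus\Id_k)$-classes and $f$ being a reduction. Everything else is a short case analysis.
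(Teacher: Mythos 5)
Your proof is correct and uses essentially the same approach as the paper, both halves resting on Fact~\ref{darkMinimalCombinatorially}. The paper phrases the second part more compactly---it deduces from inseparability that no $R$-class is computable, so a reduction $R \leq S \oplus \Id_k$ cannot send anything into the computable $\Id_k$-classes---while you reach the same conclusion via the finite-partition dichotomy on $W_E, W_0,\ldots,W_{k-1}$, but the underlying mechanism is identical.
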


\begin{proof}	
Suppose that $X$ is a computable set which separates $[a]_R$ and $[b]_R$.
Either $X$ or the complement of $X$ must intersect infinitely many classes in
$R$. But neither can intersect every class, since $[a]_R \subseteq X$ and
$[b]_R \cap X=\emptyset$. This contradicts
Fact~\ref{darkMinimalCombinatorially}. The latter claim follows from the fact
that if $R$ is a dark minimal ceer then no reduction
$R \leq S \oplus \Id_k$, for any $k\ge 1$,
can hit $\Id_k$ since no $R$-equivalence class is computable.
\end{proof}

In the next lemma we show the corresponding fact for the $\mathbb{Z}$-dark
minimal ceers.

\begin{lemma}\label{lem:characterization-I-minimality}
A ceer $R$ is a $\mathbb{Z}$-dark minimal ceer if and only if it has infinitely
many computable classes and every c.e. set $W$ which intersects infinitely
many $R$-classes intersects co-finitely many $R$-classes.
\end{lemma}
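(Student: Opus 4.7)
The plan is to prove both directions, using the $\mathbb{Z}$-chain structure of $\deg_\I(R)$ described in the paragraph preceding the lemma. A key tool in both directions will be the following consequence of Observation~\ref{obs:not-omitting}: whenever $E\equiv F$ with both self-full, any reduction $g:E\to F$ is onto the classes of $F$, so for each $F$-class $[a]_F$ the preimage $g^{-1}([a]_F)$ is a single $E$-class, computable whenever $[a]_F$ is (being the preimage of a computable set under a computable function). By symmetry this yields that $E$ and $F$ have the same number of computable classes.

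For the forward direction, assume $R$ is $\mathbb{Z}$-dark minimal. For each $k\ge 1$, $R\equiv R_{-k}\oplus\Id_k$, and $R_{-k}\oplus\Id_k$ is dark hence self-full (Facts~\ref{fct:dark-closure}(2) and~\ref{fact:sf-useful}(2)); since $R_{-k}\oplus\Id_k$ has at least $k$ computable classes (the $\Id_k$-classes on the odd side), the auxiliary observation gives $R$ at least $k$ computable classes for every $k$, hence infinitely many. For the co-finite intersection property, let $W$ be c.e.\ intersecting infinitely many $R$-classes. Then $R\restriction W$ has infinitely many classes and is dark, since a light $R\restriction W$ would push forward through the inclusion $h:\omega\to W$ to an infinite c.e.\ transversal of $R$. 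By minimality of $\deg_\I(R)$ in $\Dark_{/\I}$, $R\restriction W\equiv_\I R$; since $R\restriction W\le R$, the $\mathbb{Z}$-chain structure forces $R\restriction W\equiv R_{-m}$ for some $m\ge 0$, and hence $R\equiv R\restriction W\oplus\Id_m$. Any reduction $h:R\restriction W\oplus\Id_m\to R$ must send the $\Id_m$-summand onto $m$ distinct $R$-classes disjoint from $W$; composing with a reduction $g:R\to R\restriction W\oplus\Id_m$, which is onto the classes by Observation~\ref{obs:not-omitting}, gives a self-reduction of $R$ onto its classes whose image lies in the $R$-classes intersecting $W$ together with the $m$ chosen ones. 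So every $R$-class missed by $W$ is among these $m$, giving at most $m$ missed classes.

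For the backward direction, assume $R$ has infinitely many computable classes and the co-finite intersection property. First, $R$ is dark: it is infinite, and if $\Id\le R$ via some reduction $f$ then $V=f(2\omega)$ is c.e., intersects infinitely many $R$-classes (by injectivity of $f$ on classes), yet each $[f(2k+1)]_R$ is disjoint from $V$, exhibiting infinitely many classes missed by $V$ and contradicting the co-finite property. Second, $\deg_\I(R)$ is minimal in $\Dark_{/\I}$: given dark $S\le_\I R$, Fact~\ref{fct:restrictionI} reduces to the case $S\le R$, and then $S\equiv R\restriction W$ for $W=\range(f)$ by Remark~\ref{rem:restrictions}; since $S$ is infinite, $W$ intersects infinitely many $R$-classes and hence misses only finitely many, say $k$, so Fact~\ref{fact:trivial}(3) yields $S\oplus\Id_k\equiv R\restriction W\oplus\Id_k\equiv R$, i.e., $S\equiv_\I R$. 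Third, $\deg_\I(R)$ is of the $\mathbb{Z}$-chain type: otherwise $R\equiv D\oplus\Id_j$ for some $\le$-minimal dark ceer $D$ and some $j\ge 0$, but Observation~\ref{inseparabilityDarkMinimal} says $D$ has no computable classes, so $D\oplus\Id_j$ has exactly $j$ computable classes, and the auxiliary observation forces $R$ to have exactly $j<\infty$ computable classes, contradicting the hypothesis. The main obstacle throughout is the careful coordination between $\equiv_\I$ and $\equiv$: we must upgrade $\equiv_\I$-information to a genuine $\equiv$-equivalence $R\equiv R\restriction W\oplus\Id_m$ (respectively $R\equiv D\oplus\Id_j$) via the $\mathbb{Z}$-chain structure precisely so that Observation~\ref{obs:not-omitting} applies, and it is exactly this step that distinguishes the $\mathbb{Z}$-chain type from the dark-minimal type of minimal $\I$-degree.
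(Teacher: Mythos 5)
Your proof follows the same skeleton as the paper's, and your ``auxiliary observation'' about preimages of computable classes under onto-the-classes reductions is exactly the mechanism the paper invokes via Observation~\ref{obs:not-omitting}. The backward direction is essentially the paper's, with one harmless stylistic change: you rule out the dark-minimal type by counting computable classes (a dark minimal $D$ has none, so $D\oplus\Id_j$ has exactly $j$), whereas the paper exhibits the infinite descending chain in $\deg_\I(R)$ directly by collapsing computable classes.

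However, the forward direction contains a false step: the assertion that \emph{any} reduction $h\colon R\restriction W\oplus\Id_m\to R$ must send the $\Id_m$-summand to $R$-classes disjoint from $W$. Nothing forces this; $h$ could route an $\Id_m$-class to an $R$-class meeting $W$, provided the image of $h\restriction (R\restriction W)$ avoids that class (for instance $h$ might permute classes of $R$, which the hypotheses do not rule out). Consequently the conclusion you draw about the image of $h\circ g$ lying in ``the $R$-classes intersecting $W$ together with the $m$ chosen ones'' does not follow for arbitrary $h$. What you actually need --- and what the paper does --- is to \emph{construct} a specific $h$: the inclusion $R\restriction W\le R$ extended by mapping $\Id_m$ to $m$ of the omitted classes (assuming $W$ omits at least $m$ classes, the remaining case being trivial). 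For this specific $h$ the image really is contained in the classes meeting $W$ together with the $m$ chosen omitted ones, so if $W$ omitted more than $m$ classes the self-reduction $h\circ g$ of $R$ would omit a class, contradicting self-fullness via Observation~\ref{obs:not-omitting}. After this repair your argument coincides with the paper's.
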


\begin{proof}
Suppose that $R$ is a $\mathbb{Z}$-dark minimal ceer. For every $k$, $R
\equiv S\oplus \Id_k$ for some $S$. By self-fullness of $R$ and
Observation~\ref{obs:not-omitting}, the reduction of $R$ to $S\oplus \Id_k$
is onto the classes of $S\oplus \Id_k$, so $R$ has at least $k$ computable
classes. This is true for every $k$, so $R$ has infinitely many computable
classes. Suppose $W$ is a c.e. set which intersects infinitely many
$R$-classes. Since $R\restriction W\leq R$, we see by $\I$-minimality of $R$
that $R\restriction W \oplus \Id_k\equiv R$ for some $k\ge 0$, where
again we let $R\restriction W \oplus \Id_0=R\restriction W$. Suppose now that $W$ omits
$\ge k+1$ classes. Then from the inclusion $R\restriction W \leq R$ and
sending the $k$-classes of $\Id_k$ to $k$ of the classes omitted by $W$ it is
possible to build a reduction $R\restriction W \oplus \Id_k \leq R$ which
omits at least one class in its range. But this, together with $R \leq
R\restriction W \oplus \Id_k$, contradicts Observation~\ref{obs:not-omitting}.
It follows that $W$ hits co-finitely many classes.

For the converse: First we check that, under the assumptions, $R$ is dark.
Suppose $W$ enumerated an infinite transversal. Consider $V$ any c.e.
co-infinite subset of $W$. Since $W$ is a transversal, $V$ cannot contain
co-finitely many classes in $R$. Yet $V$ hits infinitely many classes in $R$,
contradicting the hypothesis on $R$. Thus $R$ is dark.

Since $R$ has infinitely many computable classes, then for every $k$ we can
collapse any $k+1$ of these together to find an $E$ so that $E\oplus
\Id_k\equiv R$. Thus $R$ is not in the $\I$-class of a dark minimal ceer, for
choosing such an $E$ and taking $R_{-k}=E$,
we must have an infinite strictly descending chain provided by the
$\equiv$-degrees of the various $R_{-k}$. Towards showing $\I$-minimality of $R$, suppose $X\leq_\I R$, where $X$ is not finite. By Fact \ref{fct:restrictionI},
$X\equiv_{\I} Y$ for some  $Y\leq R$. By Remark \ref{rem:restrictions}, there
is some $W$ so that $Y\equiv R\restriction W$. Since $X$ is not finite, $Y$
must not be finite, and thus $W$ intersects infinitely many $R$-classes. Thus
$W$ intersects co-finitely many $R$-classes. Thus, by Fact
\ref{fact:trivial}(3), $Y\equiv R_{-m}$ for some $m$. Thus, $R\leq_{\I}
R_{-m}\leq_{\I} X$. Thus $R$ is $\I$-minimal.
\end{proof}

We isolate the following fact which is immediate from
Lemma~\ref{lem:characterization-I-minimality}:

\begin{corollary}\label{cor:Z-dark-infy-computable}
If $R$ is a $\mathbb{Z}$-dark minimal ceer, $W$ is c.e., and $W$ intersects infinitely many
classes, then the $R$-closure of $W$ is computable, and every class omitted from
$W$ is computable.
\end{corollary}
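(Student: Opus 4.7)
The plan is to apply Lemma \ref{lem:characterization-I-minimality} directly and then argue via complementation that everything c.e.\ in sight turns out to be computable.

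First, by Lemma \ref{lem:characterization-I-minimality}, since $R$ is $\mathbb{Z}$-dark minimal and $W$ intersects infinitely many $R$-classes, $W$ intersects co-finitely many $R$-classes. Let $[a_1]_R, \ldots, [a_k]_R$ be the finitely many $R$-classes disjoint from $W$. These are precisely the classes omitted from the $R$-closure $\overline{W}$ of $W$ as well, since $\overline{W}$ hits the same classes as $W$.

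Next, I would observe that $\overline{W}$ is c.e.\ (as the $R$-closure of a c.e.\ set under a ceer), and that each individual class $[a_i]_R$ is c.e.\ (as an $R$-class of a ceer). The complement of $\overline{W}$ equals $[a_1]_R\cup\cdots\cup[a_k]_R$, a finite union of c.e.\ sets and hence c.e. Thus $\overline{W}$ is both c.e.\ and co-c.e., so $\overline{W}$ is computable.

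Finally, for each omitted class $[a_i]_R$, the complement $\omega\setminus[a_i]_R$ equals $\overline{W}\cup\bigcup_{j\neq i}[a_j]_R$, which is again a finite union of c.e.\ sets and hence c.e. Since $[a_i]_R$ is itself c.e., it is computable. There is no real obstacle here: once Lemma \ref{lem:characterization-I-minimality} reduces the problem to ``only finitely many classes are missing,'' the rest is the standard observation that a c.e.\ set with c.e.\ complement is computable.
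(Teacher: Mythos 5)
Your argument is correct and is essentially the paper's proof: the paper observes that the $R$-closure of $W$ together with the finitely many omitted classes partitions $\omega$ into finitely many c.e.\ sets, whence each is computable, and your write-up simply unpacks this standard complementation argument explicitly.
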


\begin{proof}
The $R$-closure of $W$ along with each of the finitely many omitted classes forms
a partition of $\omega$ into finitely many c.e. sets. Thus, each of these
sets is computable.
\end{proof}

\begin{remark}\label{rmk:co-finitelyMany}
Notice that although in Fact~\ref{darkMinimalCombinatorially} and
Lemma~\ref{lem:characterization-I-minimality} we distinguish between the
cases of being dark minimal and being $\mathbb{Z}$-dark minimal for a ceer
lying in a dark minimal $\I$-degree, it is correct to say, unifying the two
cases, that if $R$ lies in a dark minimal $\I$-degree then every c.e. set
intersecting infinitely many $R$-equivalence classes intersects co-finitely
many classes. If $R$ is in the $I$-degree of a dark minimal ceer, say $R\equiv R_0\oplus \Id_n$ where $R_0$ is dark minimal, then ``co-finitely many classes'' means
in fact in this case ``all classes except for the $n$ computable classes''.
\end{remark}

\begin{defn}
We say that $\boldsymbol{a}$ is an \emph{\ismc} of the pair of
$\mathcal{I}$-degrees $\boldsymbol{d},\boldsymbol{e}$ if $\boldsymbol{d}$ and
$\boldsymbol{e}$ are $\I$-incomparable, $\boldsymbol{d},\boldsymbol{e}<_\I
\boldsymbol{a}$, and the only degrees $<_\I \boldsymbol{a}$ are $\leq_\I
\boldsymbol{d}$ or $\leq_\I \boldsymbol{e}$.
\end{defn}

\begin{lemma}\label{oplusISMC}
If $\boldsymbol{r}_1$ and $\boldsymbol{r}_2$ are distinct dark minimal
$\I$-degrees, then $\boldsymbol{r}_1\oplus \boldsymbol{r}_2$ is an \ismc of
the pair $\boldsymbol{r}_1,\boldsymbol{r}_2$.
\end{lemma}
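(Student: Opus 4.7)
The plan is to verify directly the three requirements in the definition of $\I$-strongly minimal cover. First, I would observe that $\boldsymbol{r}_1$ and $\boldsymbol{r}_2$ are $\I$-incomparable: if, say, $\boldsymbol{r}_1\leq_\I \boldsymbol{r}_2$, then since $\boldsymbol{r}_1$ is represented by a dark (hence non-finite) ceer and $\boldsymbol{r}_2$ is $\I$-minimal, $\I$-minimality forces $\boldsymbol{r}_1 = \boldsymbol{r}_2$, contradicting distinctness. The inequalities $\boldsymbol{r}_i <_\I \boldsymbol{r}_1 \oplus \boldsymbol{r}_2$ then follow: the $\leq_\I$ direction is immediate, and strictness follows because $\boldsymbol{r}_1 \oplus \boldsymbol{r}_2 \leq_\I \boldsymbol{r}_i$ would give $\boldsymbol{r}_{3-i}\leq_\I \boldsymbol{r}_i$, which we just excluded.

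The substantive task is strong minimality. Fix representatives $R_i$ of $\boldsymbol{r}_i$ and let $\boldsymbol{d}<_\I \boldsymbol{r}_1 \oplus \boldsymbol{r}_2$ with representative $D$. I would apply Fact~\ref{fct:restrictionI} to the reduction $D\leq_\I R_1 \oplus R_2$ to obtain ceers $X_1\leq R_1$ and $X_2\leq R_2$ with $D\equiv_\I X_1 \oplus X_2$. Since each $R_i$ lies in a minimal $\I$-degree and $X_i\leq R_i\leq_\I R_i$, the minimality of $\boldsymbol{r}_i$ yields the dichotomy: either $X_i$ is finite or $X_i\equiv_\I R_i$.

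From here the argument proceeds by a short case analysis on the four combinations. If both $X_1, X_2$ are finite, then so is $X_1\oplus X_2$, whence $\boldsymbol{d}$ is the bottom $\I$-degree, which is trivially $\leq_\I \boldsymbol{r}_1$. If exactly one $X_i$ is non-finite, then the other summand is absorbed via Fact~\ref{fact:trivial}, giving $D\equiv_\I X_i\equiv_\I R_i$, hence $\boldsymbol{d}=\boldsymbol{r}_i$. The remaining case, where both $X_i\equiv_\I R_i$, is the one we need to rule out: one checks that $X_1\oplus X_2 \equiv_\I R_1 \oplus R_2$ by combining the $\I$-reductions on each side (again using Fact~\ref{fact:trivial} to absorb finite $\Id_k$ summands into the respective components), so $\boldsymbol{d}=\boldsymbol{r}_1\oplus \boldsymbol{r}_2$, contradicting strictness.

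The only delicate point is this last verification that $\equiv_\I$ is compatible with $\oplus$ componentwise, and even there the work is purely bookkeeping on the $\Id_k$-summands, so I do not expect any real obstacle.
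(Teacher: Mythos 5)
Your proof is correct and follows essentially the same route as the paper's: both apply Fact~\ref{fct:restrictionI} to write the degree below $\boldsymbol{r}_1 \oplus \boldsymbol{r}_2$ as $X_1 \oplus X_2$ with $X_i \leq R_i$, invoke $\I$-minimality to get the finite/$\equiv_\I R_i$ dichotomy for each $X_i$, and derive a contradiction from strictness when both $X_i$ are non-finite. The only cosmetic difference is that you spell out the four-way case split and, in the "both nontrivial" case, reason via $X_1 \oplus X_2 \equiv_\I R_1 \oplus R_2$ rather than just the one inequality $R_1 \oplus R_2 \leq_\I X_1 \oplus X_2$ that the paper uses; both work.
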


\begin{proof}
Let $R_1$ and $R_2$ be in the $\I$-degrees $\boldsymbol{r}_1,
\boldsymbol{r}_2$ respectively, and suppose $X<_{\I} R_{1} \oplus R_{2}$. We
want to show that either $X\leq _{\I} R_{1}$ or $X\leq_{\I} R_{2}$. We may
assume that $X$ is not finite, otherwise the claim is trivial. From Fact
\ref{fct:restrictionI}, we have that $X\equiv_{\I} X_1\oplus X_2$ where
$X_1\leq R_1$, and $X_2\leq R_2$. By the $\I$-minimality of $R_1$, we have
that either $R_1\leq_{\I} X_1$ or $X_1$ is finite. Similarly, either
$R_2\leq_{\I} X_2$ or $X_2$ is finite. If both $R_1\leq_{\I} X_1$ and
$R_2\leq_{\I} X_2$, then $R_1\oplus R_2\leq_{\I} X$ contradicting
$X<_{\I}R_1\oplus R_2$. Thus without loss of generality we may assume $X_2$
is finite. Then $X\equiv_\I X_1\oplus X_2\leq_{\I}X_1\leq_{\I} R_1$.
\end{proof}

\begin{lemma}\label{secondISMC}
If $\boldsymbol{r}_1$ and $\boldsymbol{r}_2$ are distinct dark minimal
$\I$-degrees represented by $R_1$ and $R_2$, $x$ is even and $y$ is odd, then
the $\mathcal{I}$-degree of $(R_1\oplus R_2)_{/(x,y)}$ is an \ismc of the
pair $\boldsymbol{r}_1$ and $\boldsymbol{r}_2$.
\end{lemma}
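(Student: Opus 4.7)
The plan is to extend the argument of Lemma~\ref{oplusISMC}, inserting one extra step to neutralize the identification $(x,y)$. Writing $T := (R_1 \oplus R_2)_{/(x,y)}$ with $x=2p$ and $y=2q+1$, I would first verify $\boldsymbol{r}_1,\boldsymbol{r}_2 <_\I \deg_\I(T)$. The natural maps $a \mapsto 2a$ and $a \mapsto 2a+1$ reduce $R_1$ and $R_2$ into $T$, since two even (resp.\ odd) elements are $T$-equivalent iff they are $R_1 \oplus R_2$-equivalent---the added identification is the only mixed-parity feature of $[x]_T$, and it does not touch pairs of equal parity. Strictness follows because $T \leq_\I R_1$ combined with $R_2 \leq T$ would give $R_2 \leq_\I R_1$, and then $\I$-minimality of $\boldsymbol{r}_1$ together with the non-finiteness of $R_2$ would force $\boldsymbol{r}_1 = \boldsymbol{r}_2$, contradicting distinctness.

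Suppose now $X <_\I T$ with $X$ non-finite; I aim to show $X \leq_\I R_1$ or $X \leq_\I R_2$. By Fact~\ref{fct:restriction} applied to a reduction $X \leq T \oplus \Id_k$, we may replace $X$ by an $\I$-equivalent ceer and assume $X \leq T$ via a reduction $f$. Partition $\omega$ into the computable sets $E=f^{-1}(2\omega)$ and $O=f^{-1}(2\omega+1)$, and set $g(c)=f(c)/2$ on $E$ and $h(c)=(f(c)-1)/2$ on $O$. The parity observation above shows $g$ is a reduction $X\restriction E \leq R_1$ and $h$ is a reduction $X\restriction O \leq R_2$. At most one $X$-class can meet both $E$ and $O$ (the one, if any, whose $f$-image lies in the merged class $[x]_T$), so $X$ differs from $(X\restriction E)\oplus (X\restriction O)$ by at most the identification of one pair of classes, giving $X \equiv_\I (X\restriction E)\oplus(X\restriction O)$.

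By $\I$-minimality of $\boldsymbol{r}_1$ and $\boldsymbol{r}_2$, each of $X\restriction E,X\restriction O$ is either finite or lies in the corresponding dark minimal $\I$-degree. If both are finite then $X$ is finite, contradicting our assumption; if exactly one is finite, then $X$ is $\I$-equivalent to the non-finite restriction and we obtain the desired $X \leq_\I R_1$ or $X \leq_\I R_2$.

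The main obstacle is the remaining case, where both $X \restriction E \equiv_\I R_1$ and $X \restriction O \equiv_\I R_2$; here I would derive $X \equiv_\I T$, contradicting $X <_\I T$. Since $g(E)$ is c.e.\ and, by non-finiteness of $X\restriction E$, hits infinitely many $R_1$-classes, Remark~\ref{rmk:co-finitelyMany} (unifying Fact~\ref{darkMinimalCombinatorially} and Lemma~\ref{lem:characterization-I-minimality}) forces $g(E)$ to hit cofinitely many $R_1$-classes; symmetrically $h(O)$ hits cofinitely many $R_2$-classes. Thus $f(X) = 2g(E) \cup (2h(O)+1)$ misses only finitely many $T$-classes---finitely many among the even $T$-classes, finitely many among the odd ones, and at worst the single merged class $[x]_T$. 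By Remark~\ref{rem:restrictions}, $X \equiv T \restriction f(X)$, and Fact~\ref{fact:trivial}(3) then gives $T \equiv X \oplus \Id_l$ for this finite number $l$ of omitted classes, yielding $T \equiv_\I X$ as required.
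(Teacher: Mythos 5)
Your overall plan is sound and reaches the correct conclusion, but there is a gap in the middle step claiming $X \equiv_\I (X\restriction E)\oplus(X\restriction O)$. That claim is false in general: if the reduction $f$ hits the merged class $[x]_T$ from both an $E$-part and an $O$-part, then one $X$-class has been ``counted twice'' in the uniform join $(X\restriction E)\oplus(X\restriction O)$, and the result is not a decomposition of $X$ but an identification of one pair of classes of that join. You even acknowledge this in passing, but an identification of a pair of classes is not the same as a uniform join with a finite ceer, so the conclusion $X \equiv_\I (X\restriction E)\oplus(X\restriction O)$ does not follow from Fact~\ref{fact:trivial}. Concretely, $(X\restriction E)\oplus(X\restriction O)$ may have strictly more classes than $X$, yet be non-finite, so there is no reason $X$ and this join should differ by a finite ceer. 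The repair is exactly what the paper does: split into cases by whether $\range(f)$ hits only finitely many even (resp.\ odd) $T$-classes. If it hits infinitely many of each, one counts the omitted classes directly (your ``main obstacle'' paragraph, which is fine) and concludes $X \equiv_\I T$, a contradiction. If it hits only finitely many of one parity, say finitely many even classes, then one builds a reduction $X \leq \Id_k \oplus R_2$ or $X \leq \Id_{k-1} \oplus R_2$ by collapsing the finitely many even targets to chosen representatives, so $X \leq_\I R_2$. Your ``at most one finite restriction'' case division is thus the wrong axis: $X\restriction E$ being non-finite does not mean $W\cap 2\omega$ hits infinitely many classes, and finiteness of $X\restriction E$ is not what lets you conclude $X\leq_\I R_2$. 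The correct dichotomy is finiteness of the hit classes, not finiteness of the restricted ceer, and handling the merged class requires the explicit finite-collapse argument from the paper rather than passing through the uniform join.

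Aside from this, your last paragraph (the ``both infinite'' case) is essentially the paper's argument and is correct, and your initial reductions $R_1,R_2 \leq T$ and the strictness argument are also fine.
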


\begin{proof}
Suppose $X<_{\I} (R_1\oplus R_2)_{/(x,y)}$: we want to show that either
$X\leq _{\I} R_{1}$ or $X\leq_{\I} R_{2}$. We may assume that $X$ is not
finite, otherwise the claim is trivial. Take $n$ so that $X \leq (R_{1}
\oplus R_{2})_{/(x,y)}\oplus \Id_n$. By Fact \ref{fct:restriction}, we can
write $X\equiv X_0\oplus X_1$ where $X_0\leq (R_{1} \oplus R_{2})_{/(x,y)}$
and $X_1\leq \Id_n$. Thus $X\equiv_{\I} X_0$, and since we are only
considering $X$ up to $\I$-degree, we may replace $X$ by $X_0$ and thus we
may assume that $X< (R_{1} \oplus R_{2})_{/(x,y)}$. Let $W=\range(f)$ where
$f$ reduces $X\leq (R_1 \oplus R_2)_{/(x,y)}$. If $W\cap 2\omega$ hits
infinitely many classes, then $\frac{W}{2}$ hits cofinitely many classes in
$R_1$. Similarly on the odd classes.

We first rule out the possibility that $W$ hits both infinitely many even and
infinitely many odd classes: if this were the case, then  there would be
$h,k$ such that $\frac{W}{2}$ would omit $h$ $R_{1}$-classes and
$\frac{W-1}{2}$ would omit $k$ $R_{2}$-classes. If $f$ does not hit the
$(R_1\oplus R_2)_{/(x,y)}$-class of $x$ then $f$ omits exactly $h+k-1$
classes, otherwise  $f$ omits exactly $h+k$ classes. Let $j$ be the number of
classes (either $h+k-1$ or $h+k$) omitted by $f$. Then by
Fact~\ref{fact:trivial}(2), $X \oplus \Id_{j} \equiv (R_1\oplus
R_2)_{/(x,y)}$, so $X\equiv_{\I} (R_1\oplus R_2)_{/(x,y)}$. This contradicts
the assumption that $X<_{\I} (R_1\oplus R_2)_{/(x,y)}$.

Therefore, we may suppose that $W$ hits only finitely many even classes, the
case of $W$ hitting only finitely many odd classes being similar. Now we aim
to show that $X \leq_{\I} R_{2}$. Let $k$ be the number of the finitely many
even classes that are hit by $f$, and choose representatives $a_{0}, \ldots,
a_{k-1}$ for these classes. Recall that $0, 1, \ldots, k-1$ are
representatives of the $k$ equivalence classes of $\Id_k$. Consider the
function $g$ computed by the following procedure: Given a number $x$, if
$f(x)$ is even then search for the first $a_{i}$ such that $f(x) \rel{(R_{1}
\oplus R_{2})_{/(x,y)}} a_{i}$ and let $g(x)=a_{i}$. If $f(x)$ is odd, then
let $g(x)=f(x)$. Using $g$ we see that if $W$ does not hit the $(R_{1} \oplus
R_{2})_{/(x,y)}$-equivalence class of $x$ then there is a reduction $X\leq
\Id_k\oplus R_2$, which, for each $i <k$, matches e.g.\ the $(R_{1} \oplus
R_{2})_{/(x,y)}$-equivalence class of $a_i$ with the $\Id_k\oplus
R_2$-equivalence class of $2i$. Otherwise there is a reduction $X \leq
(\Id_k\oplus R_2)_{/(0,y)}\equiv \Id_{k-1}\oplus R_2$, where we assume
without loss of generality that the $(R_{1} \oplus
R_{2})_{/(x,y)}$-equivalence classes of $a_0$ and $x$ coincide. In either case, we have $X\leq_{\I}R_2$.
\end{proof}

\begin{lemma}\label{lem:isc-incomparable}
If $R_1$ and $R_2$ are $\I$-incomparable and are each dark minimal or
$\mathbb{Z}$-dark minimal ceers, and $[x_{0}]_{R_1}$ and $[y_{0}]_{R_2}$ are
non-computable, then, letting $x=2x_{0}$ and $y=2y_{0}+1$,we have that
$R_1\oplus R_2$ and $(R_1\oplus R_2)_{/(x,y)}$ are $\I$-incomparable ceers.
(Note that the condition that $[x]_{R_1}$ and $[y]_{R_2}$ are non-computable
must hold if $R_1$ and $R_2$ are each dark minimal, by
Observation~\ref{inseparabilityDarkMinimal}.)
\end{lemma}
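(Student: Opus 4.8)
The plan is to show two non-reductions: $R_1 \oplus R_2 \not\leq_\I (R_1\oplus R_2)_{/(x,y)}$ and $(R_1\oplus R_2)_{/(x,y)}\not\leq_\I R_1\oplus R_2$. The key structural fact I would exploit is that $E := (R_1\oplus R_2)_{/(x,y)}$ is obtained from $R_1\oplus R_2$ by fusing exactly one non-computable $R_1$-class (namely $[x_0]_{R_1}$ on the even side) with one non-computable $R_2$-class (namely $[y_0]_{R_2}$ on the odd side); so $E$ "looks like" $R_1\oplus R_2$ but with those two classes identified, and in particular $E$ is dark by Fact~\ref{fct:dark-closure}(3). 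The first direction should be the easier one: if $R_1\oplus R_2 \leq_\I E$, then since $R_1$ (say) is not finite and $R_1\leq R_1\oplus R_2\leq_\I E$, Fact~\ref{fct:restrictionI} would give $R_1\equiv_\I Y$ for some $Y\leq E$, and by Remark~\ref{rem:restrictions} $Y\equiv E\restriction W$ for a c.e. $W$ intersecting infinitely many $E$-classes. But any c.e. subset of $\omega$ that hits infinitely many $E$-classes on, say, the even side pulls back to a c.e. set hitting infinitely many $R_1$-classes, and conversely a reduction witnessing $R_1\leq_\I E$ must land (up to finitely many classes) inside one of the two "sides" of $E$ by $\I$-minimality of $R_1$ (exactly the argument used in Lemmas~\ref{oplusISMC} and~\ref{secondISMC}); chasing this through yields $R_2\leq_\I R_1$ or $R_1\leq_\I R_2$, contradicting $\I$-incomparability. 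Symmetrically for $R_2$.

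For the reverse direction, suppose $E = (R_1\oplus R_2)_{/(x,y)} \leq_\I R_1\oplus R_2$, say via $f$ with $E\leq (R_1\oplus R_2)\oplus \Id_n$; applying Fact~\ref{fct:restriction} twice we may write $E\equiv_\I X_1\oplus X_2$ with $X_1\leq R_1$ and $X_2\leq R_2$. Now I would argue that $E$ is $\I$-above both $R_1$ and $R_2$: indeed $R_1\leq R_1\oplus R_2$ and, modulo the single identified pair, the inclusion of the even side into $E$ is still a reduction $R_1\to E$ omitting only that one (now fused) class, so $R_1\leq_\I E$, and likewise $R_2\leq_\I E$. Combined with $\I$-minimality of $R_1$ and $R_2$, this forces each of $X_1,X_2$ to be either finite or $\I$-above the corresponding $R_i$; since $E\equiv_\I X_1\oplus X_2$ bounds both $R_1$ and $R_2$, neither $X_i$ can be finite, so $R_1\leq_\I X_1\leq R_1$ and $R_2\leq_\I X_2 \leq R_2$, whence $E\equiv_\I R_1\oplus R_2$. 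It then remains to derive a contradiction from $E\equiv_\I R_1\oplus R_2$ — and this is where I expect the real work to be.

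The main obstacle is showing $R_1\oplus R_2 \not\equiv_\I (R_1\oplus R_2)_{/(x,y)}$, i.e. that fusing one non-computable class on each side genuinely changes the $\I$-degree. The natural strategy is a counting/onto argument in the spirit of Lemma~\ref{secondISMC}: take a reduction $g$ witnessing $R_1\oplus R_2 \leq (R_1\oplus R_2)_{/(x,y)}\oplus \Id_m$. By darkness and minimality (via Observation~\ref{inseparabilityDarkMinimal} for the dark-minimal case, and Corollary~\ref{cor:Z-dark-infy-computable} / Lemma~\ref{lem:characterization-I-minimality} for the $\mathbb{Z}$-dark-minimal case), $g$ cannot hit the $\Id_m$ part using the non-computable classes, and its restriction to the even inputs must hit cofinitely many even $E$-classes while its restriction to the odd inputs hits cofinitely many odd $E$-classes. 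Tracking how many classes $g$ omits, and using that the fused class $[x_0]_{R_1}=[y_0]_{R_2}$ in $E$ is a single class that "should" receive images from both the even and odd sides of $R_1\oplus R_2$, one gets a parity/cardinality mismatch: the reduction would have to be simultaneously onto (by self-fullness, Observation~\ref{obs:not-omitting}, since $E$ and $R_1\oplus R_2$ are dark hence self-full) and to omit a class, which is impossible. Making the bookkeeping here precise — carefully handling whether $g$'s image meets the fused class, and whether $[x_0]_{R_1}$ or $[y_0]_{R_2}$ happens to be computable in the $\mathbb{Z}$-dark case (the hypothesis only forbids both simultaneously in the dark-minimal case, but here we assume both are non-computable) — is the delicate step, but it is entirely parallel to the omitted-class counting already carried out in the proof of Lemma~\ref{secondISMC}.
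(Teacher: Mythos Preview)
Your reduction to showing $R_1\oplus R_2\not\equiv_\I E:=(R_1\oplus R_2)_{/(x,y)}$ matches the paper: since both are {\ismc}s of the pair $\boldsymbol{r}_1,\boldsymbol{r}_2$ (Lemmas~\ref{oplusISMC} and~\ref{secondISMC}), a strict inequality between them would force one of them $\leq_\I R_i$ for some $i$, contradicting $\I$-incomparability of $R_1,R_2$. (Your paragraph~1 is muddled---starting from the always-true fact $R_1\leq_\I E$ leads nowhere---but the intended content is just this one-line observation.) Where you diverge is in the direction you choose to rule out $\equiv_\I$: you attack $R_1\oplus R_2\nleq_\I E$, whereas the paper shows $E\nleq_\I R_1\oplus R_2$.

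The paper's direction is short: assuming $\phi:E\to R_1\oplus R_2\oplus\Id_n$, precompose with the inclusion $R_1\hookrightarrow E$, $u\mapsto 2u$. The set of inputs this composite sends into the $R_1$-summand is c.e., and if it met only finitely many $R_1$-classes we would get $R_1\leq_\I R_2$; so by Remark~\ref{rmk:co-finitelyMany} it meets cofinitely many and misses only computable classes, hence the non-computable $[x_0]_{R_1}$ is among those sent to the $R_1$-summand, i.e.\ $\phi(x)$ lies in the $R_1$-summand. Symmetrically $\phi(y)$ lies in the $R_2$-summand. But $x\mathrel{E}y$, so $\phi(x)$ and $\phi(y)$ must be equivalent in $R_1\oplus R_2\oplus\Id_n$, yet they sit in disjoint summands: contradiction. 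Your direction, by contrast, has a real gap. Your appeal to Observation~\ref{obs:not-omitting} for the ``onto'' claim requires $R_1\oplus R_2\equiv E\oplus\Id_m$ for the \emph{specific} $m$ in hand, but you only have $\equiv_\I$, not $\equiv$; self-fullness does not bridge that. The ``parity mismatch'' idea (the fused class should receive images from both sides of $R_1\oplus R_2$) also runs into the difficulty that the even/odd partition of the domain of $E$ is not $E$-closed at the fused class, so the clean omitted-class counting of Lemma~\ref{secondISMC} does not carry over directly. Your direction may be salvageable with further work, but the sketch does not close as written, and the paper's route avoids all of this.
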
	

\begin{proof}
Since any ceer $<_{\I}$ either $R_1\oplus R_2$ or $(R_1\oplus R_2)_{/(x,y)}$
must be $\leq_\I$ either $R_1$ or $R_2$ (from the previous two lemmas), we
only need to show that $R_1\oplus R_2$ and $(R_1\oplus R_2)_{/(x,y)}$ are not
$\I$-equivalent ceers, and thus it is enough to show that $(R_1\oplus
R_2)_{/(x,y)} \nleq_{\I} R_1\oplus R_2$. Suppose towards a contradiction that
$(R_1\oplus R_2)_{/(x,y)} \leq_{\I} R_1\oplus R_2$. Then let $n$ be so that
$(R_1\oplus R_2)_{/(x,y)} \leq R_1\oplus R_2\oplus \Id_n$. Consider the composed
reduction $R_1 \leq (R_1\oplus R_2)_{/(x,y)} \leq R_1\oplus R_2\oplus \Id_n$.
In this reduction, let $W$ be the set of elements sent into the copy of $R_1$
in $R_1\oplus R_2\oplus \Id_n$. If $W$ intersects only finitely many
$R_1$-classes, then $R_1\leq_\I R_2$, which contradicts $R_1$ and $R_2$ being
$\I$-incomparable. So, $W$ intersects infinitely many $R_1$-classes, thus it
intersects co-finitely many $R_1$-classes and misses only computable classes.
Thus, $x_{0}$ is sent into $R_1$ in $R_1\oplus R_2$. Similarly, $y_{0}$ is
sent into $R_2$ in $R_1\oplus R_2$ in the
composed reduction $R_2 \leq (R_1\oplus
R_2)_{/(x,y)} \leq R_1\oplus R_2\oplus \Id_n$. Thus the reduction $(R_1\oplus
R_2)_{/(x,y)}\leq R_1\oplus R_2\oplus \Id_n$ sends $x$ into $R_1$ and $y$ into $R_2$. Thus $x$ and $y$ are equivalent in $(R_1\oplus
R_2)_{/(x,y)}$ and their images are inequivalent in $R_1\oplus R_2\oplus \Id_n$, which is a contradiction.
\end{proof}

The previous Lemmas gives us two ways to build $\I$-strongly minimal covers of pairs of $\I$-incomparable minimal dark $\I$-degrees. This assumed that the degrees were represented by ceers which had a non-computable class. We now see that this is a necessary condition for constructing two incomparable $\I$-strongly minimal covers.

\begin{lemma}\label{joinsforZdarkmins}
	Let $\boldsymbol{r}$ be the $\I$-degree of a $\mathbb{Z}$-dark minimal ceer $R$ all of whose classes are computable. Let $\boldsymbol{s}$ be an incomparable $\I$-degree. Then $\boldsymbol{r}\oplus \boldsymbol{s}$ is the join of $\boldsymbol{r}$ and $\boldsymbol{s}$ in the $\I$-degrees.
\end{lemma}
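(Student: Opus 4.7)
The plan is to show that whenever $\boldsymbol{t}$ is an $\I$-degree with $\boldsymbol{r},\boldsymbol{s}\leq_\I \boldsymbol{t}$, we have $\boldsymbol{r}\oplus\boldsymbol{s}\leq_\I\boldsymbol{t}$; the opposite inequalities are immediate. Fix $T\in\boldsymbol{t}$, $S\in\boldsymbol{s}$, and reductions $f\colon R\to T\oplus\Id_n$ and $g\colon S\to T\oplus\Id_m$. The idea is to reroute these into a single reduction of $R\oplus S$ into $T\oplus\Id_k$, arranging that the $T$-images coming from $R$ do not collide with those coming from $S$.

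Let $A_R=f^{-1}(\{2k+1\mid k\in\omega\})$ and $A_S=g^{-1}(\{2k+1\mid k\in\omega\})$. Each is a union of finitely many full equivalence classes (at most $n$ and $m$ respectively), and each is computable: the complement is c.e., being the preimage of the even numbers. Let $\tilde{f}\colon\omega\setminus A_R\to\omega$ be obtained from $f$ by decoding its values on the $T$-coordinate, and similarly define $\tilde{g}$; then $\tilde{f}$ reduces $R\restriction(\omega\setminus A_R)$ into $T$ and $\tilde{g}$ reduces $S\restriction(\omega\setminus A_S)$ into $T$. Set $W_S=\tilde{g}(\omega\setminus A_S)$ and let $[W_S]_T$ denote its $T$-closure, a c.e. set.

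The heart of the argument is to show that $\tilde{f}^{-1}([W_S]_T)$ meets only finitely many $R$-classes. Suppose otherwise. By Lemma~\ref{lem:characterization-I-minimality}, since $R$ is $\mathbb{Z}$-dark minimal, $\tilde{f}^{-1}([W_S]_T)$ must intersect co-finitely many $R$-classes. For each such $x$ we can computably search for $y\in\omega\setminus A_S$ with $\tilde{f}(x)\rel{T}\tilde{g}(y)$; existence is guaranteed by the definition of $[W_S]_T$, and uniqueness of $[y]_S$ follows from $\tilde{g}$ being a reduction. The map $x\mapsto y$ then reduces $R\restriction\tilde{f}^{-1}([W_S]_T)$ to $S$, and combined with Fact~\ref{fact:trivial}(3) this yields $R\leq_\I S$, contradicting the incomparability of $\boldsymbol{r}$ and $\boldsymbol{s}$. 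I expect this step, which combines $\I$-minimality of $R$ with the incomparability hypothesis, to be the main obstacle.

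Set $A=A_R\cup\tilde{f}^{-1}([W_S]_T)$, a finite union of $R$-classes; since every $R$-class is computable by hypothesis, $A$ is computable. By Fact~\ref{fact:trivial}(3), $R\equiv R\restriction(\omega\setminus A)\oplus\Id_q$ and $S\equiv S\restriction(\omega\setminus A_S)\oplus\Id_{q'}$ for appropriate $q,q'$. Let $W=\{2x\mid x\in\omega\setminus A\}\cup\{2y+1\mid y\in\omega\setminus A_S\}$ and define $\phi\colon W\to\omega$ by $\phi(2x)=\tilde{f}(x)$ and $\phi(2y+1)=\tilde{g}(y)$. Precomposed with a computable surjection $\omega\to W$, this yields a reduction $(R\oplus S)\restriction W\leq T$: same-parity equivalences are preserved by $\tilde{f}$ and $\tilde{g}$, and cross-parity inequivalences are preserved because by construction of $A$ we have $\tilde{f}(\omega\setminus A)\cap[W_S]_T=\emptyset$ while $\tilde{g}(\omega\setminus A_S)\subseteq[W_S]_T$. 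Since $W$ omits exactly $q+q'$ classes of $R\oplus S$, a final application of Fact~\ref{fact:trivial}(3) gives $R\oplus S\leq T\oplus\Id_{q+q'}$, so $\boldsymbol{r}\oplus\boldsymbol{s}\leq_\I\boldsymbol{t}$.
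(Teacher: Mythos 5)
Your proof is correct and follows essentially the same route as the paper: in both cases the key move is to look at the set of $R$-classes whose image under the reduction collides with an $S$-image in $T$, invoke Lemma~\ref{lem:characterization-I-minimality} to split into ``finitely many'' versus ``co-finitely many'' cases, rule out the co-finite case by producing a reduction $R \leq_\I S$ contradicting incomparability, and then use the hypothesis that all $R$-classes are computable to excise the finitely many bad $R$-classes and assemble a reduction of $R \oplus S$ (modulo finitely many classes) into $T$. The only cosmetic difference is that you peel off the $\Id_n$- and $\Id_m$-parts explicitly via $A_R,A_S$ before comparing $T$-images, whereas the paper lets collisions through the $\Id_n$-part ride along in the collision set $Z$ and handles them implicitly; neither is problematic, and the underlying argument is the same.
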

\begin{proof}
	Let $S$ be any ceer. Let $X$ be any ceer so that $R,S\leq_{\I} X$. We want to show that either $R\oplus S \leq_{\I} X$ or $R\leq_{\I} S$. Fix some $n$ so that $R,S\leq X\oplus \Id_n$ and let $f$ and $g$ give these reductions. Let $W$ be the set of pairs $(2x,2y+1)$ so that $f(x) X\oplus \Id_n g(y)$. Then $R\oplus S/W\leq X\oplus \Id_n$. We have two cases to consider. Let $Z=\{x\mid \exists y (2x,2y+1)\in W\}$. Note that $Z$ is $R$-closed.
	
	Case 1: $Z$ contains only finitely many $R$-classes. In this case,
since each of these classes are computable, $Z$ is computable. Let $R_0= R \restriction \omega\smallsetminus Z$. Then $R\oplus S \equiv_{\I} R_0\oplus S\leq X\oplus \Id_n$, so we conclude $R\oplus S\leq_{\I} X$.
	
	Case 2: $Z$ contains infinitely many $R$-classes. In this case, we know that $Z$ contains co-finitely many $R$-classes. Since each of the remaining classes are computable, $Z$ is computable.
Let $R_0= R \restriction Z$. Note that $R_0\equiv_{\I} R$. We now give a map which reduces $R_0$ to $S$. For $x\in R_0$, search for some $y$ so that $(2x,2y+1)\in W$. Send $x$ to the first such $y$ found. This gives a reduction of $R_0$ to $S$, thus $R\leq_{\I} S$.
\end{proof}

\begin{lemma}\label{ZsAreIsolated}
If $A$ is a $\mathbb{Z}$-dark minimal ceer and $B$ is any ceer so that the
pair $A,B$ has two $\I$-incomparable {\ismc}s, then $A$ has a non-computable
class.
\end{lemma}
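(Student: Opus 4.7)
The plan is to argue by contradiction: assume $A$ has all computable classes, and show that $\boldsymbol{a}\oplus\boldsymbol{b}$ (where $\boldsymbol{a}=\deg_\I(A)$ and $\boldsymbol{b}=\deg_\I(B)$) is then the \emph{unique} $\I$-strongly minimal cover of the pair $\boldsymbol{a},\boldsymbol{b}$, contradicting the hypothesis that two $\I$-incomparable such covers exist.

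First, note that by the definition of an $\I$-strongly minimal cover, $\boldsymbol{a}$ and $\boldsymbol{b}$ must be $\I$-incomparable (otherwise there is no such cover at all). Under our contradiction hypothesis, $A$ is a $\mathbb{Z}$-dark minimal ceer all of whose classes are computable, so Lemma \ref{joinsforZdarkmins} applies with $\boldsymbol{r}=\boldsymbol{a}$ and $\boldsymbol{s}=\boldsymbol{b}$, yielding that $\boldsymbol{c}:=\boldsymbol{a}\oplus\boldsymbol{b}$ is the join of $\boldsymbol{a}$ and $\boldsymbol{b}$ in the $\I$-degrees.

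Now let $\boldsymbol{x}$ be any $\I$-strongly minimal cover of $\boldsymbol{a}$ and $\boldsymbol{b}$. Since $\boldsymbol{a},\boldsymbol{b}<_\I \boldsymbol{x}$, the join property gives $\boldsymbol{c}\leq_\I \boldsymbol{x}$. If $\boldsymbol{c}<_\I \boldsymbol{x}$ strictly, then by the definition of ISMC we would have $\boldsymbol{c}\leq_\I \boldsymbol{a}$ or $\boldsymbol{c}\leq_\I \boldsymbol{b}$. Combined with $\boldsymbol{c}\geq_\I \boldsymbol{a},\boldsymbol{b}$, this would force $\boldsymbol{b}\leq_\I \boldsymbol{a}$ or $\boldsymbol{a}\leq_\I \boldsymbol{b}$, contradicting $\I$-incomparability. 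Hence $\boldsymbol{x}=\boldsymbol{c}$.

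Therefore every $\I$-strongly minimal cover of the pair $\boldsymbol{a},\boldsymbol{b}$ coincides with the single degree $\boldsymbol{c}$, so no two $\I$-incomparable such covers can exist, contradicting the hypothesis. This forces $A$ to have a non-computable class. The argument is essentially a one-step application of the join lemma, so the only subtlety is confirming that the hypotheses of Lemma \ref{joinsforZdarkmins} are met; no further combinatorial work on ceers is needed.
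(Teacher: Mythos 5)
Your proof is correct and takes the same approach as the paper's, which states only that the lemma is immediate from the preceding join lemma. You have simply unpacked exactly why having a join of $\boldsymbol{a}$ and $\boldsymbol{b}$ forces uniqueness of any $\I$-strongly minimal cover of that pair, which is the intended argument.
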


\begin{proof}
Immediate from the preceding lemma.
\end{proof}

\section{Coding graphs into the partial order $\Dark_{/\I}$ using
parameters} \label{Coding Using Parameters} In the following by a \emph{graph} we mean a structure $G=\langle
V, E\rangle$ where $V$ is a nonempty set of \emph{vertices}, and $E$, called
the \emph{edge relation},  is an  irreflexive and symmetric binary
relation on $V$.

\begin{defn}\label{def:vertices-edges}
Given any dark $\I$-degree $\boldsymbol{c}$, we describe a graph
$G_{\boldsymbol{c}}$ as follows:
\begin{itemize}
  \item (vertices) the \emph{vertex set} $V$ of the graph are the
      $\I$-minimal degrees $\leq \boldsymbol{c}$.
  \item (edges) For each pair $\boldsymbol{d}, \boldsymbol{e} \in V$, we
      put an \emph{edge} between $\boldsymbol{d}$ and $\boldsymbol{e}$ if
      and only if there are two incomparable $\I$-degrees $\boldsymbol{a},
      \boldsymbol{b} \leq_{\I} \boldsymbol{c}$ which are both {\ismc}s of
      the pair $\boldsymbol{d}, \boldsymbol{e}$.
\end{itemize}
\end{defn}
We will often say that we have an edge between ceers $R$ and $S$ if we have an edge between their $\I$-degrees.

We now show how to code any computable graph  in $\Dark_{/\I}$, modulo some isolated vertices.

\begin{thm}\label{codesExist}
If $G=(V,E)$ is a computable graph, then there exists a dark degree
$\boldsymbol{c}$ so that $G_{\boldsymbol{c}}$ is isomorphic to the disjoint union of $G$ with
a graph which has no edges.
\end{thm}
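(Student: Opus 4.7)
The plan is to build a uniform family $\{R_v : v \in V\}$ of pairwise $\I$-incomparable dark minimal ceers and take $\boldsymbol{c}$ to be the $\I$-degree of a uniform join containing all the $R_v$'s together with, for each edge of $G$, a second $\I$-strongly minimal cover. Concretely, I would construct the $R_v$'s by running in parallel (uniformly in $v$) the dark minimal ceer construction of Theorem~4.10/Corollary~4.15 of~\cite{Andrews-Sorbi}, interleaved with Friedberg-style diagonalizations ensuring $R_v \not\leq R_w$ whenever $v \neq w$ (which is equivalent to $\I$-incomparability for dark minimal ceers by Observation~\ref{inseparabilityDarkMinimal}). Since every $R_v$-class is non-computable (again by Observation~\ref{inseparabilityDarkMinimal}), for each edge $(v,w)\in E$ I can pick any $x_{v,w}=2a$ and $y_{v,w}=2b+1$ and automatically have $[a]_{R_v}$, $[b]_{R_w}$ non-computable, as required to apply Lemma~\ref{lem:isc-incomparable}. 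Set
$$
C := \Bigl(\bigoplus_{v \in V} R_v\Bigr) \,\oplus\, \Bigl(\bigoplus_{(v,w) \in E} (R_v \oplus R_w)_{/(x_{v,w},y_{v,w})}\Bigr),\qquad \boldsymbol{c} := \deg_\I(C),
$$
which is dark by Fact~\ref{fct:dark-closure}. The positive direction is then immediate: for each $(v,w)\in E$, both $R_v \oplus R_w$ and $(R_v \oplus R_w)_{/(x_{v,w},y_{v,w})}$ lie $\leq_\I \boldsymbol{c}$ and form an $\I$-incomparable pair of {\ismc}s of $(R_v, R_w)$ by Lemmas~\ref{oplusISMC}, \ref{secondISMC}, and~\ref{lem:isc-incomparable}, so the edge $v$--$w$ appears in $G_{\boldsymbol{c}}$.

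The main obstacle is the negative direction: for each non-edge $(v,w)\notin E$, no second {\ismc} of $(R_v, R_w)$ lies below $\boldsymbol{c}$; and every minimal $\I$-degree below $\boldsymbol{c}$ is some $R_u$. I would handle both via a decomposition argument. Given $A \leq_\I C$, the c.e.\ range of the reduction together with iterated application of Fact~\ref{fct:restrictionI} yields $A \equiv_\I \bigoplus_i A_i$ with $A_i \leq C_i$ for each component $C_i$ of $C$. When $A$ is an {\ismc} of $R_v, R_w$ with $(v,w) \notin E$, the darkness of $A$ combined with the {\ismc} condition — no light ceer is $\leq_\I R_v$ or $R_w$ since both are dark minimal — prevents infinite accumulation of finite summands into a light piece $\leq_\I A$, so only finitely many $A_i$ are non-finite. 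For each non-finite $A_i$, inspection of $C_i$ (a single $R_u$ or an edge-component {\ismc}) together with the strict inequality $A_i <_\I A$ — which holds because $(v,w)\notin E$ rules out $\{v,w\}=\{a,b\}$ at any edge component $(R_a\oplus R_b)_{/(\ldots)}$, and $A \leq_\I R_u$ for a vertex component would force $v=w=u$ — forces via the {\ismc} property that $A_i \leq_\I R_v$ or $R_w$; $\I$-minimality and pairwise $\I$-incomparability of the $R_u$'s then give $A_i\equiv_\I R_v$ or $R_w$. Joining yields $A\equiv_\I R_v\oplus R_w$, the unique {\ismc} of $(R_v,R_w)$ below $\boldsymbol{c}$ up to $\equiv_\I$.

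The analogous decomposition handles minimal $\I$-degrees: any minimal $D\leq_\I\boldsymbol{c}$ has exactly one non-finite summand in its decomposition (by minimality of $D$) lying below some $C_i$, and the minimal $\I$-degrees below each component $C_i$ are precisely the $R_u$'s, yielding $D\equiv_\I R_u$ for some $u\in V$. Hence in fact $G_{\boldsymbol{c}}\cong G$ with no isolated extras, which is slightly stronger than the stated disjoint-union form. The technical heart of the argument is the careful control of the infinite-join decomposition — pieces scattered across infinitely many components must not conspire into an unexpected {\ismc} or minimal degree — and this is precisely what the combination of darkness of $A$ (forbidding any light summand $\leq_\I A$) and the {\ismc} constraint on $A$ (only $R_v, R_w$ strictly below) is there to rule out.
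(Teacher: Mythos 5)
Your construction of $C$ fails at the very first step when $G$ is infinite, which is the case that actually matters: the uniform join $\bigoplus_{v \in V} R_v$ over infinitely many indices is always \emph{light}, never dark. To see this, just take $W = \{\langle v, 0\rangle : v \in V\}$; this is an infinite c.e.\ set and any two of its elements lie in distinct columns of the join, hence are inequivalent, so $W$ is an infinite transversal. Fact~\ref{fct:dark-closure}(2), which you cite to get darkness of $C$, only says that a \emph{binary} join of a dark ceer with a finite or dark ceer is dark; it says nothing about infinite joins, and indeed no such statement could be true. Since the theorem demands a \emph{dark} degree $\boldsymbol{c}$ (and this darkness is essential downstream, both for Definition~\ref{def:vertices-edges} to make sense and for the parameter-free interpretation), this is a fatal gap, not a technicality.

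This is in fact exactly the difficulty that drives the paper's argument to a priority construction. The $\dark_j$ requirements are there precisely to destroy potential transversals: whenever some $W_j$ threatens to become one, the construction collapses a block of consecutive columns to a single class, injuring and restarting lower-priority coding requirements. This is why the resulting ceer has ``column blocks'' in addition to coding columns, and is also why the theorem is stated with a disjoint-union-with-isolated-vertices conclusion rather than $G_{\boldsymbol{c}} \cong G$: the column blocks can give rise to $\mathbb{Z}$-dark minimal ceers with all computable classes sitting below $\boldsymbol{c}$, which by Lemma~\ref{ZsAreIsolated} are isolated vertices of $G_{\boldsymbol{c}}$. Your claim of a strictly stronger conclusion with no extra vertices should itself have been a warning sign. (Your intended construction \emph{does} work verbatim when $G$ is finite, which is the content of Remark~\ref{rem:finite-infinite}, but finitely many vertices will not carry the interpretation of arithmetic.) The positive-direction reasoning you give — using Lemmas~\ref{oplusISMC}, \ref{secondISMC}, and~\ref{lem:isc-incomparable} — is correct as far as it goes, and your decomposition idea for the negative direction is the right instinct, but both have to be re-run against a ceer produced by a construction that actually enforces darkness, and against the column-block structure that construction necessarily introduces.
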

\begin{proof}
Fix a computable presentation of $G$. We also fix a uniform c.e. sequence of
pairwise $\leq_\I$-incomparable dark minimal ceers $\{R_i\mid i\in \omega\}$:
for this, just observe that by the proof of \cite[Theorem~3.3]{Andrews-Sorbi}
from any finite set $R_0, \ldots, R_n$ of dark minimal ceers we can uniformly
find a dark minimal ceer $R_{n+1}$ so that $R_{n+1} \nleq R$ where
$R=\bigoplus_{i\leq n} R_i$, and thus $R_{n+1} \nleq_\I R$ by
Observation~\ref{inseparabilityDarkMinimal}; at the same time, for each $i$,
$R_i \nleq_\I R_{n+1}$ otherwise (again by
Observation~\ref{inseparabilityDarkMinimal}) $R_i \leq R_{n+1}$, and thus
$R_{n+1} \leq R_i \leq R$, by minimality of $R_{n+1}$. If $R,E$ are ceers we
say that the \emph{$n$-th column of $E$ codes $R$} (or $R$ is \emph{copied in
the $n$-th column of $E$}) if for every $x,y$, $x \rel{R} y$ if and only if
$\langle n,x \rangle \rel{E} \langle n,y \rangle$.

\subsection*{Requirements and strategies}
We construct the ceer $C$ (with $\I$-degree $\boldsymbol{c}$) with the
following requirements:

\begin{flalign*}
\Code_i: &\quad \textrm{Some column of $C$ codes $R_i$.}\\
\dark_j: &\quad \textrm{If $W_j$ is infinite, then there are distinct $x, y\in
    W_j$ so that $x \rel{C} y$.}\\
\Edge_{\langle i, j \rangle}: &\quad \textrm{If there is an edge between $i$ and
    $j$ in $G$, then some column}\\
    &\quad \textrm{of $C$ codes $(R_i\oplus R_j)_{/(0,1)}$.}
\end{flalign*}

We have no requirement directly ensuring that there are no extra edges in $G_{\boldsymbol{c}}$. This will follow from the simple form of the ceer $C$ that we construct.

The \emph{priority order} of the requirements is
\[
\Code_{0} < \Edge_{0} < \dark_{0}< \cdots < \Code_{j} <
\Edge_{j} < \dark_{j} < \cdots
\]
A requirement $\Edge_{\langle i, j \rangle}$ such that the graph $G$ has an
edge between $i$ and $j$ will be called \emph{binding}. As we consider only
symmetric irreflexive graphs, we assume that if $(i)_{0}=(i)_{1}$ then
$\Edge_{i}$ is not binding, and $\Edge_{\langle j, i \rangle}$ is not binding
if $\langle i, j \rangle < \langle j,i \rangle$.

We outline the strategies to meet the requirements.

For the sake of the $\Code_i$-requirement, we act by picking a new column and
determining that this column will copy $R_i$. It restrains this entire
column.

For the sake of the $\dark_j$-requirement, while the requirement is not
satisfied (it becomes permanently satisfied when distinct numbers $x,y$
appear such that $x,y \in W_{j}$ and $x \rel{C} y$) we simply wait for $W_j$
to enumerate two distinct numbers $x,y$ which are not in  columns restrained
by higher priority requirements, then we collapse to a single class the
entire columns of $x$ and $y$.

We will use the following notations: $\omega^{[i]}$ denotes the $i$-th column
of $\omega$; $\omega^{[i,j]}= \bigcup_{i \leq r \leq j}\omega^{[r]}$, and
$\omega^{[i,j)}= \bigcup_{i \leq r < j}\omega^{[r]}$. It will follow from the
construction that $\dark_j$ works with a parameter $\epsilon_j$ so that the
columns restrained by higher priority requirements will be the columns
$\omega^{[n]}$ for $n \leq \epsilon_{j}$; eventually $\epsilon_{j}$
stabilizes in the limit, and the interval $[0, \epsilon_{j}]$ is eventually
partitioned into subintervals, each one being either a singleton $\{i\}$ so
that in the $i$-th column codes a dark ceer (with $R_{0}$ coded in the $0$-th
column); or a subinterval $[a,b]$ so that all columns $\omega^{[i]}$ with $i
\in [a,b]$ are collapsed to a single (clearly decidable) class. Being thus
computably bijective with a uniform join of dark ceers and copies of
$\Id_{1}$ and being infinite, this finite set of columns can be viewed as a
dark ceer $R$ (see Fact~\ref{fct:dark-closure}). If an infinite $W_j$ is
contained in this finite set of columns, we need not do anything, as in this
case $W_{j}$ is not a transversal of $C$. Otherwise (as shown in the
verification) from $W_{j}$ one could find an infinite c.e. set which is a
transversal
contained in one column of $C$, which is impossible since each column of $C$ is either finite or codes a dark ceer. In the following
we will distinguish between \emph{coding columns} in which we code dark
ceers, and \emph{column blocks}, comprised of finitely many consecutive
columns of $\omega$ all collapsed to a single $C$-equivalence class which is
decidable.

If $\dark_{j}$ acts by collapsing, it re-initializes all lower priority
requirements, it leaves untouched all coding columns and column blocks in the
restrained interval $[0, \epsilon_{j}]$, and collapses to a single class all
other columns up to the biggest column so far used in the construction: these
newly collapsed columns contain also the witnesses $x,y$ which are thus
collapsed and $\dark_j$ is permanently satisfied.

For the sake of the $\Edge_{\langle i, j \rangle} $-requirement, we act
exactly as in the other Coding requirements $\Code_i$. The only distinction
is that the existence of any one of these $\Edge$-requirements (i.e. whether
or not it is binding) is determined by the computable graph $G$.

It follows that in the end $C$ will consist of single coding
columns (used for coding ceers of the form $R_i$ or $(R_i\oplus
R_j)_{/(0,1)}$) and column blocks comprised of finitely many consecutive
columns of $\omega$ all collapsed to a single $C$-equivalence class. We say that a ceer with this structure is a \emph{simple coding ceer}.

We first observe that the only dark minimal ceers $\leq_{\I} C$ are the
$R_i$'s that we began with. To see this, let $E\leq_{\I}C$ be dark minimal. Firstly notice that $E \leq C$ by
Observation~\ref{inseparabilityDarkMinimal}, and $E$ must reduce to a single coding
column. In fact  (again by Observation~\ref{inseparabilityDarkMinimal}) no
class can be mapped to a column block as no $E$-class is decidable, and no
two distinct $E$-classes can be mapped to distinct coding columns by
computable inseparability of $E$. So $E \leq R_i$, or $E \leq (R_i\oplus
R_j)_{/(0,1)}$ for some $i,j$. In the former case $E$ is equivalent to $R_i$, and
in the latter case, Lemma \ref{secondISMC} shows that $E$ must be equivalent
to either $R_i$ or $R_j$.

\subsection*{Adequacy of the requirements}
We now suppose that $C$ is a simple coding ceer and has been constructed satisfying all the requirements
and we verify that $G_{\boldsymbol{c}} \simeq G$ (where $\simeq$ denotes
isomorphism) modulo a set of isolated vertices, which are $\I$-degrees
of $\mathbb{Z}$-dark minimal ceers with only computable classes.

We first observe that the only $\mathbb{Z}$-dark minimal ceers $\leq_{\I} C$
have only computable classes. Suppose towards a contradiction that $A$ is a
$\mathbb{Z}$-dark minimal ceer with a non-computable class $[m]_A$, and $f$ is a reduction $f: A\leq C
\oplus \Id_r$ for some $r$ which witnesses $A\leq_{\I} C$. Let $f(m)=2\cdot\langle n,x\rangle$, where
clearly the $n$-th column is a coding column, say coding the ceer $X$, by
undecidability of $[m]_A$. Let $W$ be the computable set of $y$ so that
$f(y)$ does not land in the $n$-th column of $C$. By
Corollary~\ref{cor:Z-dark-infy-computable} $W$ cannot hit infinitely many
classes, as otherwise $[m]_A$, which is omitted by $W$, should be computable.
Thus we must have that $W$ intersects only finitely many classes. Thus the
reduction $f$ lands in the $n$-th column of $C$ plus a finite collections of
single equivalence classes of $C\oplus \Id_r$. Thus $A\leq X\oplus \Id_s$ for
some $s$, so $A\leq_{\I}X$. Now, $X$ is either a dark minimal ceer or one of
$(R_i\oplus R_j)_{/(0,1)}$. Since a dark minimal ceer cannot bound a
$\mathbb{Z}$-dark minimal ceer, the former case is impossible. In the latter
case, we have that $A\equiv_{\I} (R_i\oplus R_j)_{/(0,1)}$ or $A\leq_{\I}
R_i$ or $A\leq_{\I}R_j$. In any case $A$ is not a $\mathbb{Z}$-dark minimal
ceer, which is a contradiction.

Thus, the universe of $G_{\boldsymbol{c}}$ is comprised of the dark minimal
ceers $R_i$ along with perhaps some $\mathbb{Z}$-dark minimal ceers which
have all computable classes. By Lemma \ref{ZsAreIsolated}, these are isolated
points in $G_{\boldsymbol{c}}$.

If there is an edge between $i$ and $j$ in $G$, then we have $R_i\oplus R_j$
and $(R_i\oplus R_j)_{/(0,1)}$ being both $\leq C$. By Lemmas \ref{oplusISMC}
and \ref{secondISMC}, we have an edge between  $R_i$ and $R_j$ in
$G_{\boldsymbol{c}}$.

Now, suppose that there is no edge between $i$ and $j$ in $G$. Then we do not
place any columns in $C$ of the form of $(R_i\oplus R_j)_{/(0,1)}$. Suppose
$X$ is an \ismc below $C$ of the pair $R_i,R_j$. Consider the composed
reductions $R_i\leq_\I X \leq_\I C$ and $R_j\leq_\I X\leq_\I C$.
By
Observation~\ref{inseparabilityDarkMinimal}, the first reduction in each of
the two chains is $\leq$. By computable inseparability of the classes of $R_i$ and $R_j$, these
reductions reduce to single coding columns of $C$. Each coding column is
either some $R_k$, or has the form $(R_k\oplus R_l)_{/(0,1)}$. In the latter
case, by Lemma~\ref{secondISMC} only $R_k$ and $R_l$ reduce to that column.
Thus, the two coding columns in which $R_i$ and $R_j$ are reducing to in $C$ are
different columns. Thus we see $R_i\oplus R_j\leq X$, giving $R_i\oplus
R_j\equiv_\I  X$ as $X$  is an \ismc of the pair $R_i,R_j$. Thus we can only
have one \ismc of the pair $R_i,R_j$ below $C$, and there is no edge between
$R_i$ and $R_j$ in $G_{\boldsymbol{c}}$.

\subsection*{The construction}
In the formal construction we make use of several parameters: $\gamma_i(s)$,
if defined, denotes the column in which at $s$ we code $R_i$.
$\epsilon_{\langle i,j \rangle}(s)$, if defined and $\Edge_{\langle i,j
\rangle}$ is binding, denotes the column in which at $s$ we code $(R_i \oplus
R_j)_{/(0,1)}$. The parameter $r(s)$ denotes the least number $n$ so that the
corresponding column is still \emph{fresh} i.e. no parameter $\gamma_i(t)$ or
$\epsilon_i(t)$ for $t\leq s$ was defined and $\geq n$. At each stage $s$
there will always be a unique number $i$ such that we define for the first
time, or redefine, $\gamma_{i}(s)$. This will determine also the definition
of $\epsilon_{i}(s)$ as
\[
\epsilon_{i}(s)=
\begin{cases}
\gamma_{i}(s)+1, &\text{if $\Edge_{i}$ is binding},\\
\gamma_{i}(s), &\text{otherwise}.
\end{cases}
\]
This means that we plan to code $R_{i}$ in the $\gamma_{i}(s)$-column. If
there is an edge in the graph from $(i)_{0}$ to $(i)_{1}$ then we plan to
code $(R_{(i)_{0}} \oplus R_{(i)_{0}})_{/(0,1)}$ in the next column, and
otherwise $\Edge_{i}$ is not binding and does not need to be coded in any
column. A requirement $\dark_{j}$ is \emph{satisfied} at $s$, if $W_{j}$ has
already enumerated a pair of distinct numbers $x,y$ which $C$ has already
collapsed. Finally, at each stage $s$ we define a ceer $C_{s}$, so that
$C_{0}\subseteq C_{1}\subseteq \cdots$, and the sequence $\{C_{s}\mid  s \in
\omega\}$ is c.e., so that $C=\bigcup_{s} C_{s}$ is our desired final ceer.

\paragraph{Stage $0$}
Let $\gamma_0(0)=0$. Since $0= \langle 0, 0 \rangle$, $\Edge_0$ is not
binding, thus $\epsilon_{0}(0)=0$, and $r(0)=1$. All other parameters are
undefined. (The construction will ensure that $\gamma_0$ and $\epsilon_0$
will never be initialized.) Let $C_0$ be the ceer generated by the c.e. set
of pairs $X$ where
\[
X= \{\gamma_0(0)\}\times R_0(=\{0\}\times R_0)
\]

Here and below we use the notation: For $n\in \omega$ and $E$ a ceer,
$\{n\}\times E = \{(\langle n,x\rangle,\langle n, y \rangle )\mid (x,y)\in
E\}$.

\paragraph{Stage $s+1$}
Let us say that $\dark_j$ \emph{requires attention at $s+1$} if $j \leq s$,
$\epsilon_j=\epsilon_j(s)$ is defined, $\dark_j$ is not as yet satisfied at
the end of stage $s$, and there are distinct numbers $x,y \in W_{j,s+1}\cap \omega^{[\epsilon_j+1, r(s))}$.

\begin{enumerate}

\item If some $\dark_j$ requires attention, then pick the least such $j$ and define $\gamma_{j+1}(s+1)=r(s)$. This also determines $\epsilon_{j+1}(s+1)$ (equal to $r(s)+1$ or $r(s)$ depending on whether
    $\Edge_{j+1}$ is binding or not). Set to be undefined all
    $\gamma_i$ and $\epsilon_i$ for all $i>j+1$. Let $C_{s+1}$ be the ceer generated by the c.e. set of
    pairs $C_s \cup X$ where, for the newly defined $\gamma_{j+1},
    \epsilon_{j+1}$,
    \[
      X=
        \begin{cases}
              (\omega^{[\epsilon_{j+1},r(s))})^2 \cup
              \left(\{\gamma_{j+1}\}\times R_{j+1}\right) \cup
              \left(\{\epsilon_{j+1}\}
              \times (R_{(j+1)_0} \oplus R_{(j+1)_1})_{/(0,1)}\right),
                   &\text{if $\Edge_{j+1}$}\\
                   &\text{is binding},\\
              (\omega^{[\epsilon_{j+1},r(s))})^2 \cup \left(\{\gamma_{j+1}\}
              \times R_{j+1}\right),
                   &\text{otherwise},
\end{cases}
\]
    Notice that the pairs in $\omega^{[\epsilon_{j+1},r(s))}$ all
    $C$-collapse. Declare $\dark_j$ \emph{satisfied} (it will never become
    unsatisfied again), as $\dark_j$ has $C$-collapsed two distinct numbers
    of $W_j$. Notice that $\dark_j$ injures all lower priority $\Code$- and
    $\Edge$-requirements. The injured highest priority $\Code$-requirement
    (namely, $\Code_{j+1}$) starts anew on the fresh column $r(s)$,  and,
    if binding, $\Edge_{j+1}$ starts anew on the next column.

\item If no $\dark_j$ requires attention then let $i$ be the least number
    such that $\gamma_{i}(s)$ is undefined. Define $\gamma_{i}(s+1)=r(s)$.
    This determines  $\epsilon_{i}(s+1)$ and $r(s+1)$ as well. Let $C_{s+1}$ be the ceer generated by the c.e. set of pairs $C_s \cup X$
    where, for the newly defined $\gamma_{i}= \gamma_{i}(s+1)$ and
    $\epsilon_{i}= \epsilon_{i}(s+1)$,
\[
X=
\begin{cases}
 C_s \cup
      \left(\{\gamma_{i}\}\times R_{i}\right) \cup \left(\{\epsilon_{i}\}
      \times (R_{(i)_0} \oplus R_{(i)_1})_{/(0,1)}\right),
      &\text{if $\Edge_{i}$ is binding},\\
C_s  \cup \left( \{\gamma_{i}\}\times R_{i}\right), &\text{otherwise}.
\end{cases}
\]
\end{enumerate}

\subsection*{Verification}
We first observe that for every $i$, $\gamma_{i}=\lim_{s} \gamma_{i}(s)$ and
$\epsilon_{i}= \lim_{s} \epsilon_{i}(s)$ exist. By the way we define
$\epsilon_{i}$ we need only show that $\lim_{s} \gamma_{i}(s)$
exists, as $\epsilon_{i}=\gamma_{i}+1$ if $\Edge_{\langle
(i)_{0}(i)_{1} \rangle}$ is binding, and $\epsilon_{i}=\gamma_{i}$ otherwise.
This is seen by induction. The claim is trivial if $i=0$ as $\gamma_{0}(s)=0$ for every
$s$.

Suppose that the claim is true of every $j\leq i$, and let $s_{0}$ be the
least stage such that for every $s\ge s_{0}$ no such $\gamma_{j}$  changes at
$s$. Consider the requirement $\dark_{i}$. If it has already been satisfied
by stage $s_{0}$, or it will never act, then for every $s \ge s_{0}$
$\gamma_{i+1}(s)=\epsilon_{i}+1$. On the other hand, if at some least stage $s_{1}
\ge s_{0}$ $\dark_{i}$ acts, it becomes satisfied, by collapsing to a single
class the column block $\omega^{[\epsilon_{i}+1, r(s_1))}$. Then
$\gamma_{i+1}(s)=r(s_1)$ for every $s \ge s_{1}$.

Finally we prove that $C$ is dark, by showing that each $\dark_i$ is
satisfied. Suppose that $W_{i}$ is infinite and let $s_{0}$ be a stage such
that $\gamma_{i}$ and $\epsilon_{i}$ never change after $s_{0}$. Thus the
columns $\omega^{[j]}$ with $j \leq \epsilon_{i}$ are partitioned in coding
columns, and column blocks, and the intersection of $C$ with these columns
will never change after $s_{0}$. Call $E$ this intersection. Clearly there is
a computable bijection $f$ of $\omega^{[0,\epsilon_{i}]}$ onto $\omega$ under
which $E$ is translated into a ceer of the form $E'=E_{1}\oplus \cdots \oplus
E_{m}$ where each $E_{k}$ is $\equiv$ to either some $R_{j}$ or $(R_{j}
\oplus R_{h})_{/(0,1)}$ for some $j,h$, or  a single class. Since the $0$-th
column codes $R_{0}$ and thus $E'$ is not finite, it follows that $E'$ is
dark. By the same computable function $f$, $W_{i}\cap
\omega^{[0,\epsilon_{i}]}$ is transformed into a c.e. set $W=f[W_{i}]$. If
$W_{i}$ were an infinite transversal of $C$ and did not contain infinitely
many elements in the complement of $\omega^{[0,\epsilon_{i}]}$, then $W$
minus a finite set would be an infinite transversal of $E'$, which would
contradict the darkness of $E'$. Thus if $W_i$ is an infinite transversal of
$C$, then there are distinct $x,y \in W_{i}$ such that $x,y \notin
\omega^{[0,\epsilon_{i}]}$. But then $\dark_{i}$ would act for the sake of some such
pair $x,y$, and thus $\dark_{i}$ would collapse such a pair $x,y$. We conclude that $W_{i}$ is not a transversal of $C$.
\end{proof}

\begin{remark}\label{rem:xyundecidable}
Notice that instead of choosing $0,1$ when adding $(R_i \oplus R_j)_{/(0,1)}$
to $C$ we could have chosen any pair $(x,y)$ with $x$ even and $y$ odd, and
added $(R_i \oplus R_j)_{/(x,y)}$. The argument in the previous proof relies
on the fact that we can apply Lemma~\ref{lem:isc-incomparable} which works as
long as the equivalence classes of $x$ and $y$ are not computable. Since $R_i$
and $R_j$ are dark minimal, Observation~\ref{inseparabilityDarkMinimal} shows that all of their equivalence classes are
non-computable.
\end{remark}

\begin{remark}\label{rem:finite-infinite}
In the proof of Theorem~\ref{codesExist} we start with a uniform c.e.
sequence of $\leq_{\I}$-incomparable dark minimal ceers $\{R_{i} \mid i \in
\omega\}$, as we are tacitly assuming that we need to code a graph with
infinitely many vertices. If we need to code a finite graph, we can simply
code a finite sequence of incomparable dark minimal ceers $R_i$ along with
ceers $(R_i\oplus R_j)_{/(0,1)}$ to code edges between $i$ and $j$. We
needn't even have any $\dark_i$ requirements. In this case the $\I$-degree
$\boldsymbol{c}$ is the $\I$-degree of the uniform join of finitely many dark
ceers, and thus it is automatically dark.
\end{remark}

\section{Interpreting $(\mathbb{N},+,\times)$ in the partial order
$\Dark_{/\I}$ without parameters}\label{sct:dark-arithmetic}

We are now ready to show how to give a definition of $(\mathbb{N},+,\times)$
without parameters. As described in section \ref{outlineofproof}, our strategy is to consider the codes $\boldsymbol{c}$ so that $G_{\boldsymbol{c}}$ encodes a model of Robinson's system $Q$. Then the key is in encoding all finite functions to define an equivalence relation which picks out which elements of these different models of $Q$ are the same number. By encoding all finite functions, we see that this equivalence relation is at least correct on all standard numbers. Finally, our copy of $\mathbb{N}$ will the be equivalence classes of the standard natural numbers, which we will isolate as the classes that have a representative in every encoded model of $Q$.

\begin{defn}
Let $\mathcal{P}=(P,\leq)$ be a poset, and $n \ge 1$. An relation $R
\subseteq P^n$ is said to be $\emptyset$-\emph{definable in $\mathcal{P}$} if there is a
first order formula without parameters $\phi(\vec{x})$ in the language of posets (with $\vec{x}$
an $n$-tuple of variables, and all free variables of $\phi$ are in
$\vec{x}$) such that, for every $\vec{a}\in P^n$,
\[
R(\vec{a}) \Leftrightarrow \mathcal{P} \models
\phi(\vec{a}).
\]
\end{defn}

\begin{corollary}\label{cor:def-of-vertices-and-edges}
There are $\emptyset$-definable relations $V(x,c), E(x,y,c), NI(x,c)$ on $\Dark_{/\I}$
such that if $\boldsymbol{c}$ is a dark $\I$-degree then
\begin{itemize}
  \item $G_{\boldsymbol{c}}=\{\boldsymbol{x} \mid
      V(\boldsymbol{x},\boldsymbol{c})\}$,
  \item $E(\boldsymbol{x}, \boldsymbol{y}, \boldsymbol{c})$ if and only if
      $\boldsymbol{x}, \boldsymbol{y}\in G_{\boldsymbol{c}}$ and there is
      an edge between $\boldsymbol{x}$ and $\boldsymbol{y}$,
  \item $\{\boldsymbol{x} \mid NI(\boldsymbol{x},\boldsymbol{c})\}$ is the
      set on non-isolated vertices of $G_{\boldsymbol{c}}$.
\end{itemize}
\end{corollary}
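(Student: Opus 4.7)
The plan is essentially to transcribe the set-theoretic definitions directly into first-order formulas in the language of posets; since $\Dark_{/\I}$ is axiomatized using only $\leq$ (which on $\Dark_{/\I}$ is precisely $\leq_\I$), every clause in Definition~\ref{def:vertices-edges} and in the definition of \ismc already has the required syntactic shape, and the corollary follows.

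First I would isolate two auxiliary $\emptyset$-definable relations. The formula
\[
\operatorname{Min}(x) \;\equiv\; \forall y\,(y\leq x \rightarrow x\leq y)
\]
picks out the $\I$-minimal degrees, since $\Dark_{/\I}$ has no global minimum and so ``$\I$-minimal'' coincides with ``no strictly smaller element.'' Then I would write
\[
\operatorname{ISMC}(a,d,e) \;\equiv\; d<a \wedge e<a \wedge \neg(d\leq e)\wedge \neg(e\leq d)\wedge \forall z\bigl(z<a \rightarrow z\leq d \vee z\leq e\bigr),
\]
which is a direct rendering of the definition of $\I$-strongly minimal cover immediately preceding Lemma~\ref{oplusISMC}.

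With these in hand, I would simply set
\[
V(x,c) \;\equiv\; x\leq c \wedge \operatorname{Min}(x),
\]
and
\[
E(x,y,c) \;\equiv\; V(x,c)\wedge V(y,c) \wedge \exists a\,\exists b\bigl(a\leq c \wedge b\leq c \wedge \neg(a\leq b)\wedge \neg(b\leq a)\wedge \operatorname{ISMC}(a,x,y)\wedge \operatorname{ISMC}(b,x,y)\bigr),
\]
and finally
\[
NI(x,c) \;\equiv\; V(x,c) \wedge \exists y\, E(x,y,c).
\]
Each of these is a first-order formula in the language $\{\leq\}$ with no parameters, and the three defining conditions in Definition~\ref{def:vertices-edges} (together with the definition of $G_{\boldsymbol{c}}$ and the notion of an isolated vertex) verify by inspection that these formulas pick out exactly the vertex set, the edge relation, and the set of non-isolated vertices of $G_{\boldsymbol{c}}$ for any dark $\I$-degree $\boldsymbol{c}$.

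There is no genuine obstacle here; the content of the corollary is just to observe that the definitions of $G_{\boldsymbol{c}}$ and of ``\ismc'' were formulated entirely in terms of the partial order $\leq_\I$, and hence are automatically expressible in the first-order language of $\Dark_{/\I}$ without parameters. The only minor care required is to ensure that ``$\I$-minimal degree'' is correctly handled in the absence of a bottom element, which is why I would use $\forall y\,(y\leq x\rightarrow x\leq y)$ rather than something referencing a zero.
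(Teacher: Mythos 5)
Your proposal is correct and takes the same approach as the paper: the paper's proof simply observes that Definition~\ref{def:vertices-edges} and the notion of \ismc are stated purely in terms of $\leq_\I$ and hence are first-order without parameters, and you make that observation concrete by writing down the formulas $\operatorname{Min}$, $\operatorname{ISMC}$, $V$, $E$, $NI$ explicitly. Your remark about minimality is the right one to have in mind (in $\Dark_{/\I}$ there is no bottom, so ``no strictly smaller element'' is exactly ``$\I$-minimal''), and the formulas transcribe the definitions faithfully, including the implicit $x\neq y$ coming from the incomparability clause inside $\operatorname{ISMC}$.
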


\begin{proof}
Immediate as the definitions of vertices and edges in
Definition~\ref{def:vertices-edges} are given in terms of minimality and
existence of strong minimal covers for pairs, which are first order
properties in the language of posets.
\end{proof}

\begin{remark}\label{rem:coding-G-in-DarkI}
From the previous corollary, we see how to effectively translate any sentence
$\sigma$ in the language of graph theory (just the binary edge relation) into
a formula $\tilde{\sigma}(w)$ of posets with free variable $w$ such that for
every dark $\I$-degree $\boldsymbol{c}$,
\[
G_{\boldsymbol{c}} \models \sigma \Leftrightarrow \Dark_{/\I} \models
\tilde{\sigma}(\boldsymbol{c}).
\]
\end{remark}

We will refer to the following result:

\begin{thm}\label{thm:folklore}
There is a computable graph $G$ without isolated vertices in which
$(\mathbb{N}, +, \times)$ is first order $\emptyset$-definable, that is there are
first-order formulas without parameters $U,\phi_+,\phi_\times$  in the language of graphs
defining respectively the subset which is the universe of the copy of
$\mathbb{N}$ and the operations $+, \times$ in $G$.
\end{thm}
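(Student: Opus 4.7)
The plan is to apply the standard interpretation of a computable relational structure into a computable graph. Fix the computable structure $\mathcal{M} = (\mathbb{N}, A, M)$, where $A$ and $M$ are the graphs of $+$ and $\times$ viewed as ternary relations. I would build a computable graph $G$ that interprets $\mathcal{M}$ without parameters by a gadget encoding.

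The construction uses several kinds of vertices. For each $n \in \mathbb{N}$ I introduce an \emph{element vertex} $v_n$. For each triple $(a,b,c)$ with $a+b=c$, I introduce an \emph{addition gadget}: a small finite subgraph whose designated ``port'' vertices are connected to $v_a$, $v_b$, $v_c$; similarly a \emph{multiplication gadget} for each triple with $a \cdot b = c$. The auxiliary vertices in each gadget carry small distinguishing subgraphs (``tags''): for concreteness, one can attach a clique $K_{n_i}$ of specified size $n_i$ to each role (element, addition gadget, multiplication gadget, first port, second port, third port), with the $n_i$ distinct and large enough that no such clique arises accidentally from the rest of the construction. The whole graph is computable because $\mathcal{M}$ is computable and the gadgets are finite and uniformly described. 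Every vertex is adjacent to at least one other by construction — each $v_n$ is a port of the gadget witnessing $n+0=n$, and every auxiliary vertex lies in an attached tag or gadget — so $G$ has no isolated vertices.

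With the gadgets in hand, the definability is routine. I let $U(x)$ be the first-order formula asserting that $x$ is adjacent to (or otherwise locally carries) the element-tag clique; I let $\phi_+(x,y,z)$ assert that there is a vertex carrying the addition-gadget tag whose three ports, distinguished by their own tags, are adjacent to $x$, $y$, $z$ in the prescribed order; and analogously for $\phi_\times$. Each such property is a bounded existential over finitely many neighbors and tag configurations, hence a formula in the first-order language of graphs with no parameters. One then verifies by construction that
\[
\bigl(\{x \in G : G \models U(x)\},\ \phi_+^{G},\ \phi_\times^{G}\bigr) \ \cong\ (\mathbb{N},+,\times).
\]

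The only delicate point I anticipate is the choice of tags: they must be rigid enough that no unintended vertex or subconfiguration of $G$ satisfies $U$, $\phi_+$, or $\phi_\times$. Choosing the tag sizes pairwise distinct and larger than any clique arising incidentally from adjacencies among gadgets handles this cleanly. Since the result is essentially the classical fact that graphs are universal for interpreting countable structures in finite relational languages (and since $\mathcal{M}$ is computable, the resulting graph is computable), I would expect to quote this rather than grind through the combinatorial bookkeeping.
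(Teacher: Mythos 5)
Your construction is correct in outline, and it is essentially the standard gadget-based interpretation of a relational structure into a graph. The paper does not prove this theorem: it simply cites it as folklore, pointing to item 1(c)14 in Korec's list of def-complete structures and to Theorem~5.5.1 in Hodges. Your sketch fills in roughly what those references contain: element vertices, one finite gadget per tuple in each relation, role-distinguishing tags, and first-order formulas that recover the structure by bounded quantification over the tag configurations. The non-isolated-vertices requirement is met as you observe. The one place you should be careful, and you flag it yourself, is the rigidity of the tags: you must check not only that the tag clique sizes are pairwise distinct and larger than any clique formed incidentally within a gadget, but also that the formulas $U,\phi_+,\phi_\times$ pick out the intended hub vertices rather than auxiliary vertices inside the tag cliques; since every vertex you introduce has a fixed finite degree apart from the element vertices (which may have infinite degree), and since ``having degree exactly $m$'' and ``lying on an $m$-clique whose other vertices have degree $m$'' are first-order expressible for each fixed $m$, this bookkeeping works out. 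In short, you have chosen to reconstruct the cited folklore result explicitly rather than invoke it, which is legitimate; the approach is the same one the references use, so there is no mathematical divergence from what the paper relies on.
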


\begin{proof}
See item 1(c)14 of the list of def-complete structures of \cite{Korec}, or
see \cite[Theorem~5.5.1]{Hodges}.
\end{proof}

\begin{remark}\label{lem:only-isolated}
In view of Theorem~\ref{thm:folklore} in coding $G$ in $\Dark_{/\I}$  we only
need the subset of the vertices of $G_{\boldsymbol{c}}$
which is comprised of the non-isolated vertices (all of them being dark
minimal). Since by Corollary~\ref{cor:def-of-vertices-and-edges} this
subgraph is recognizable in a first order way from parameter $\boldsymbol{c}$,
henceforth we shall use the symbol $G_{\boldsymbol{c}}$ to denote this
subgraph, so that  $G_{\boldsymbol{c}}$  is henceforth understood to be
without isolated vertices and $G\simeq G_{\boldsymbol{c}}$.
\end{remark}

Fix a graph $G$ as in Theorem~\ref{thm:folklore}. So there are first-order
formulas $U,\phi_+,\phi_\times$  and a mapping $\sigma \mapsto \sigma^\circ$
from arithmetical formulas to formulas in the language of graphs so that
the following hold,
where $:=$ denotes syntactic equality: $(+(x,y,z))^\circ :=\phi_+(x,y,z)$,
$(\times(x,y,z))^{\circ}:=\phi_\times(x,y,z)$, the mapping $\mbox{}^\circ$
commutes (modulo $:=$) with propositional connectives, $(\forall x
\sigma)^\circ:=(\forall x)(U(x) \rightarrow \sigma^\circ)$, $(\exists x
\sigma)^\circ:=(\exists x)(U(x) \wedge \sigma^\circ)$, and finally for every
sentence $\sigma$, $\mathbb{N} \models \sigma$ if and only if $G\models
\sigma^\circ$.

\begin{remark}\label{rem:from-N-to-po}
Using Corollary~\ref{cor:def-of-vertices-and-edges} we see that the following
binary relation $U^c(x)$, and quaternary relations $\phi_+^c(x,y,z),
\phi_\times^c(x,y,z)$ (corresponding to $U(x), \phi_+(x,y,z),
\phi_\times(x,y,z)$ mentioned above) are $\emptyset$-definable in $\Dark_{/\I}$:
\begin{itemize}
  \item $U^{\boldsymbol{c}}(\boldsymbol{x})$ if and only if $G_{\boldsymbol{c}}
  \models U(\boldsymbol{x})$ (henceforth let $U^{\boldsymbol{c}}=
  \{\boldsymbol{x}\mid U^{\boldsymbol{c}}(\boldsymbol{x})\}$),
  \item  $\phi^{\boldsymbol{c}}_+(\boldsymbol{x}, \boldsymbol{y},
      \boldsymbol{z})$ if and only if $G_{\boldsymbol{c}} \models
      \phi_+(\boldsymbol{x}, \boldsymbol{y}, \boldsymbol{z})$,
  \item $\phi^{\boldsymbol{c}}_\times(\boldsymbol{x}, \boldsymbol{y},
      \boldsymbol{z})$ if and only if $G_{\boldsymbol{c}} \models
      \phi_\times(\boldsymbol{x}, \boldsymbol{y}, \boldsymbol{z})$.
\end{itemize}
For every $\boldsymbol{c} \in \Dark_{/\I}$ we can regard the triple
$(U^{\boldsymbol{c}}, \phi_+^{\boldsymbol{c}}, \phi_\times^{\boldsymbol{c}})$
as a structure for the arithmetical language $+, \times$. From these, we also
have $\emptyset$-definable relations $\phi^{c}_S(x, y), \leq^{c}(x, y),
\boldsymbol{0}^{c}(x)$ in $\Dark_{/\I}$, corresponding to the formulas
defining in $G$ the successor operation, the natural ordering on
$\mathbb{N}$, and the number $0$, respectively.
\end{remark}

\begin{defn}\label{def:good-code}
A $\boldsymbol{c} \in \Dark_{/\I}$ is a \emph{good code} if, in
$G_{\boldsymbol{c}}$, $(U^{\boldsymbol{c}}, \phi_+^{\boldsymbol{c}},
\phi_\times^{\boldsymbol{c}})$ gives a model of Robinson's system $Q$. 	
\end{defn}

\begin{corollary}\label{cor:good-definable}
The set of good codes is $\emptyset$-definable in the $\Dark_{/\I}$ degrees.
\end{corollary}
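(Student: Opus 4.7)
The plan is to exploit the fact that Robinson's system $Q$ is finitely axiomatizable, so the condition ``$G_{\boldsymbol{c}}$ interprets a model of $Q$'' is expressible by a single first-order sentence in the language of graphs, which can then be translated into a parameter-free formula in the language of posets.

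More precisely, let $\sigma_1,\dots,\sigma_n$ be the (finitely many) axioms of $Q$ in the language $\{+,\times\}$, and set $\sigma := \sigma_1 \wedge \cdots \wedge \sigma_n$. Applying the translation $\mbox{}^\circ$ fixed just after Theorem~\ref{thm:folklore}, we obtain a single sentence $\sigma^\circ$ in the language of graphs such that for every dark $\I$-degree $\boldsymbol{c}$,
\[
(U^{\boldsymbol{c}}, \phi^{\boldsymbol{c}}_+, \phi^{\boldsymbol{c}}_\times) \models Q
\quad \Longleftrightarrow \quad
G_{\boldsymbol{c}} \models \sigma^\circ,
\]
where on the left we view $G_{\boldsymbol{c}}$ (as in Remark~\ref{lem:only-isolated}) as its non-isolated part, which is how $U^{\boldsymbol{c}}, \phi^{\boldsymbol{c}}_+, \phi^{\boldsymbol{c}}_\times$ were defined.

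By Remark~\ref{rem:coding-G-in-DarkI} we may effectively convert the graph-theoretic sentence $\sigma^\circ$ into a formula $\widetilde{\sigma^\circ}(w)$ in the language of posets, with one free variable $w$, such that for every dark $\I$-degree $\boldsymbol{c}$,
\[
G_{\boldsymbol{c}} \models \sigma^\circ
\quad \Longleftrightarrow \quad
\Dark_{/\I} \models \widetilde{\sigma^\circ}(\boldsymbol{c}).
\]
Combining the two equivalences, $\widetilde{\sigma^\circ}(w)$ is a parameter-free formula in the language of posets that defines the set of good codes in $\Dark_{/\I}$. Since $Q$ has only finitely many axioms, no issue of ``definability by an infinite scheme'' arises, and the construction is entirely routine once the machinery of Corollary~\ref{cor:def-of-vertices-and-edges} and Remark~\ref{rem:from-N-to-po} is in place; there is no real obstacle to overcome.
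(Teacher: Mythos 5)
Your proposal is correct and is exactly the paper's argument, just spelled out in full detail: the paper's one-line proof is that the claim ``immediately follows from the fact that $Q$ is finitely axiomatizable,'' relying implicitly on the $\mbox{}^\circ$ translation and Remark~\ref{rem:coding-G-in-DarkI}, which you make explicit.
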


\begin{proof}
This immediately follows from the fact that $Q$ is finitely axiomatizable.
\end{proof}

\begin{remark}\label{rem:operations}
If $\boldsymbol{c}$ is a good code then  $\phi_S^{\boldsymbol{c}},
\phi_+^{\boldsymbol{c}}, \phi_\times^{\boldsymbol{c}}$ define operations
in $U^{\boldsymbol{c}}$, and $\boldsymbol{0}^{\boldsymbol{c}}$ is a distinguished
element of $U^{\boldsymbol{c}}$.
\end{remark}

We now begin encoding functions between our different models $U^{\boldsymbol{c}}$ and $U^{\boldsymbol{c'}}$. The goal of these functions is to def

\begin{defn}
For any pair of dark minimal $\I$-degrees $(\boldsymbol{x}, \boldsymbol{y})$,
we say that a graph of the form $\boldsymbol{x} \rel{E} \boldsymbol{a}
\rel{E} \boldsymbol{d} \rel{E} \boldsymbol{y}$ and $\boldsymbol{a} \rel{E}
\boldsymbol{b} \rel{E} \boldsymbol{c} \rel{E} \boldsymbol{a}$ (where
$\boldsymbol{a}, \boldsymbol{b}, \boldsymbol{c}, \boldsymbol{d}$ are
distinct, and distinct from $\boldsymbol{x}, \boldsymbol{y}$) is a
\emph{graph-label for the pair $(\boldsymbol{x}, \boldsymbol{y})$}. See
Figure~\ref{fig:figure1}.
\end{defn}

\begin{figure}[h!]
  \centering
  \includegraphics[scale=.5]{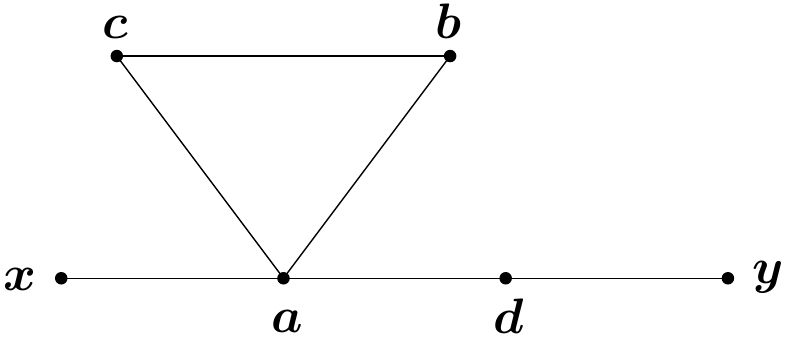}
  \caption{A graph-label for the pair $(\boldsymbol{x},
\boldsymbol{y})$}\label{fig:figure1}
\end{figure}

\begin{defn}\label{defn:finite-functions}
Given a set $F=\{(\boldsymbol{x}_i, \boldsymbol{y}_i)\mid i < n\}$ of pairs
of dark minimal $\I$-degrees,
we
say that an $\boldsymbol{f}\in \Dark_{/\I}$ \emph{is a name for $F$} if
$G_{\boldsymbol{f}}$ is comprised of the union of graph-labels for the pairs
$(\boldsymbol{x}_i,\boldsymbol{y}_i)$, where the various quadruples
$\boldsymbol{a}, \boldsymbol{b}, \boldsymbol{c}, \boldsymbol{d}$ are distinct
for each pair, and not appearing in $F$.
\end{defn}

\begin{lemma}\label{namesExist}
If $F$ is a finite set of pairs of ceers, each of which is either dark
minimal or is $\mathbb{Z}$-dark minimal with a non-computable class, then
there is a dark $\I$-degree which is a name for the set of pairs of
$\I$-degrees corresponding to the pairs of ceers in $F$.
\end{lemma}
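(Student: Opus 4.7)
The plan is to apply the coding construction of Theorem~\ref{codesExist} to the specific finite collection of ceers supplied by $F$, supplemented by fresh auxiliary dark minimal ceers to play the roles of $\boldsymbol{a}, \boldsymbol{b}, \boldsymbol{c}, \boldsymbol{d}$ in each graph-label. Write $F = \{(X_i, Y_i) \mid i < n\}$. Using the proof of \cite[Theorem~3.3]{Andrews-Sorbi} together with Observation~\ref{inseparabilityDarkMinimal}, I uniformly pick dark minimal ceers $A_i, B_i, C_i, D_i$ for $i<n$ so that every ceer on the list $\{X_j, Y_j, A_j, B_j, C_j, D_j \mid j<n\}$ is pairwise $\leq_\I$-incomparable (except for coincidences already forced by $F$). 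Note that the newly chosen $A_i, B_i, C_i, D_i$ automatically have only non-computable classes (Observation~\ref{inseparabilityDarkMinimal}), and each $X_i, Y_i$ has at least one non-computable class by hypothesis.

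Next, I would build a ceer $E$ exactly in the style of Theorem~\ref{codesExist}, but with the vertex set taken to be the finite list above and the edge set taken to be
\[
\big\{(X_i, A_i),\ (A_i, D_i),\ (D_i, Y_i),\ (A_i, B_i),\ (B_i, C_i),\ (C_i, A_i) \ \big|\ i<n\big\}.
\]
Concretely, I dedicate one coding column to a copy of each vertex ceer, and one coding column per edge $(R,S)$ to a copy of $(R\oplus S)_{/(x,y)}$, where $x$ is even, $y$ is odd, and $[x/2]_R$ and $[(y-1)/2]_S$ are non-computable classes; this choice is legitimate by Remark~\ref{rem:xyundecidable} and is available by the non-computable-class hypothesis. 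Because the vertex set is finite, per Remark~\ref{rem:finite-infinite} the $\dark$-requirements are unnecessary, and $E$ is automatically dark as a uniform join of finitely many dark ceers (Fact~\ref{fct:dark-closure}). Set $\boldsymbol{f} := \deg_\I(E)$.

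The verification that $\boldsymbol{f}$ is a name for $F$ repeats the vertex/edge analysis from Theorem~\ref{codesExist}. For vertices: any dark minimal ceer $\leq_\I E$ reduces into a single coding column (by computable inseparability, Observation~\ref{inseparabilityDarkMinimal}), and any $\mathbb{Z}$-dark minimal ceer with a non-computable class does the same (by Corollary~\ref{cor:Z-dark-infy-computable}); by Lemma~\ref{secondISMC} the candidate column cannot be an edge-coding column without forcing the ceer to collapse to one of the two endpoint vertices. For edges: each intended edge $(R,S)$ is witnessed by the two $\I$-incomparable $\I$-strongly minimal covers $R\oplus S$ and $(R\oplus S)_{/(x,y)}$ below $\boldsymbol{f}$, by Lemmas~\ref{oplusISMC}, \ref{secondISMC}, and~\ref{lem:isc-incomparable}. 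For the converse (no extra edges), the argument of Theorem~\ref{codesExist} applies verbatim: an \ismc below $\boldsymbol{f}$ of a non-edge pair must route the two vertices through distinct coding columns, leaving only one \ismc of that pair rather than two.

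The main obstacle is that Theorem~\ref{codesExist} was designed with dark minimal $R_i$ only, whereas some of the $X_i, Y_i$ may be $\mathbb{Z}$-dark minimal, so the classical computable-inseparability argument of Observation~\ref{inseparabilityDarkMinimal} does not apply to them unchanged. This is handled precisely by the non-computable-class hypothesis in the statement: it lets me pick the defect pair $(x,y)$ inside non-computable classes so that Lemma~\ref{lem:isc-incomparable} (rather than only Lemma~\ref{secondISMC}) licenses the edge-coding step, and it also lets Corollary~\ref{cor:Z-dark-infy-computable} pin any $\mathbb{Z}$-dark minimal ceer $\leq_\I E$ with a non-computable class to a single coding column during the verification---exactly what is needed to keep $G_{\boldsymbol{f}}$ (after the convention of Remark~\ref{lem:only-isolated} that isolated $\mathbb{Z}$-dark minimal vertices with only computable classes are discarded) equal to the disjoint union of the required graph-labels.
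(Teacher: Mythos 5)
Your proposal is correct and takes essentially the same approach as the paper: the paper also builds the name as a finite uniform join of the vertex ceers with $(R\oplus S)_{/(0,1)}$ for each edge of the graph-labels (choosing fresh dark minimal auxiliaries $A,B,C,D$ per pair), relying on the non-computable-class hypothesis to make Lemmas~\ref{secondISMC} and~\ref{lem:isc-incomparable} and Corollary~\ref{cor:Z-dark-infy-computable} applicable when an $X_i$ or $Y_i$ is $\mathbb{Z}$-dark minimal. The only cosmetic difference is that the paper writes out the summand $Z_{(X,Y)}$ explicitly rather than citing Remark~\ref{rem:finite-infinite}, and its vertex verification decomposes $R\leq_\I f$ via Fact~\ref{fct:restrictionI} in one stroke rather than splitting into the dark-minimal versus $\mathbb{Z}$-dark-minimal cases.
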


\begin{proof}
Fix a pair $(X,Y)$ in $F$. Without loss of generality, we assume that $[0]_X$
and $[0]_Y$ are non-computable. Let $A,B,C,D$ be distinct dark minimal ceers
with $\I$-degrees not mentioned in $F$, and chosen only for this pair. Then
we construct
\begin{multline*}
Z_{(X,Y)}=X \oplus A \oplus D \oplus Y \oplus B \oplus C \\
\oplus (X\oplus A)_{/(0,1)})
\oplus ((A\oplus D)_{/(0,1))}) \oplus
((D\oplus Y)_{/(0,1))})\oplus \\
((A\oplus B)_{/(0,1))}) \oplus
((B\oplus C)_{/(0,1))}) \oplus
(A\oplus C)_{/(0,1))}).
\end{multline*}
Now that we have constructed $Z_{(X,Y)}$ for each pair $(X,Y)\in F$, define
\[
f=\bigoplus_{(X,Y)\in F}Z_{(X,Y)}.
\]
Since there are only finitely many direct summands in $f$, and each of these
are dark, $f$ is dark by Fact \ref{fct:dark-closure} (2). Let
$\boldsymbol{f}$ denote the $\I$-degree of $f$.

First we check that we have no unwanted vertices, i.e. the only minimal
$\I$-degrees below $\boldsymbol{f}$ are equal to the $\I$-degrees of the
ceers $X,Y,A,B,C,D$ that we placed there. If $R$ has minimal $\I$-degree and
$R\leq_{\I} f$, then by
Fact~\ref{fct:restrictionI} $R\equiv_{\I} \oplus_j E_j$ where each $E_j$ is
$\leq$ one of the summands in the definition of $f$. That is, each $E_j$ is either
$\leq_{\I}$ some dark minimal ceer or $E_j$ is $\leq_{\I} (S\oplus
T)_{/(0,1)}$ where $S$ and $T$ are in dark minimal $\I$-degrees. Thus by
Lemma~\ref{secondISMC}, each $E_j$ is either finite or its $\I$-degree is
$\geq_{\I}$
a dark minimal $\I$-degree, namely the $\I$-degree of one of the summands. Thus since $R$ is not finite, the
$\I$-degree of one of the $E_j$ is $\geq_{\I}$ the dark minimal
$\I$-degree of one of the summands, and since $R$ has minimal $\I$-degree,
$R$ is $\I$-equivalent to one of the summands.

Next we check that $f$ codes exactly the edges we intended. Since the
equivalence class of $0$ is non-computable in all of the ceers $X,Y,A,B,C,D$
that we consider, if we place columns for $X$, $A$, and $(X\oplus
A)_{/(0,1)}$, we have ensured that the $\I$-degrees of $X$ and $A$ have two
$\I$-incomparable $\I$-strongly minimal covers below $\boldsymbol{f}$.
Similarly for the pairs $(A,D)$, $(D,Y)$, $(A,B)$, $(B,C)$, and $(A,C)$. Thus
by Lemmas \ref{oplusISMC} and \ref{secondISMC}, $f$ successfully codes every
edge that we intended. Suppose now the $\I$-degree of $X$ is an $\I$-strongly
minimal cover below $\boldsymbol{f}$ of the $\I$-degrees of  the pair $R_1$
and $R_2$, which are minimal $\I$-degrees below $\boldsymbol{f}$ between
which we did not explicitly code an edge. We may assume that $R_1$ and $R_2$
are among the summands we used to create $f$, in particular the equivalence
class of $0$ is non-computable. Then consider the pair of reductions $R_1
\leq f \oplus \Id_n$ and $R_2  \leq f\oplus \Id_n$, which we get by composing
the reductions $R_1\leq X \oplus \Id_k \leq f\oplus \Id_n$ and $R_2\leq X
\oplus \Id_k \leq f\oplus \Id_n$, respectively. Since the $R_{1}$-equivalence
class of $0$ is not computable, its image under the reduction $R_1 \leq
f\oplus \Id_n$ must be in some column of $f$ (not in $\Id_n$). Let $W$ be the
set of elements whose image is not in the same column under the reduction
$R_1 \leq f\oplus \Id_n$. Since $[0]_{R_{1}}$ is not computable and it is not
intersected by $W$, we must have that $W$ contains only finitely many
$R_1$-classes by Remark \ref{rmk:co-finitelyMany}. Thus $R_1\leq_{\I}$ this one column of $f$. Similarly for
$R_2$. Since we did not explicitly code an edge between $R_1$ and $R_2$, there is no column of $f$ which is $\geq_{\I} R_1$ and $\geq_{\I} R_2$. So we see that $R_1\oplus R_2\leq_{\I}X$. Thus, since the $\I$-degree
of $X$ is assumed to be an $\I$-strongly minimal cover of the $\I$-degrees of
the pair $R_1,R_2$, we have $X\equiv_{\I} R_1\oplus R_2$. Thus there can only
be one $\I$-strongly minimal cover $\leq_{\I} \boldsymbol{f}$ of any pair of
minimal $\I$-degrees aside from the pairs where we intended to place an edge.
Thus there are no unwanted edges.

Therefore the $\I$-degree $\boldsymbol{f}$ of $f$ is the desired name for
$F$.	
\end{proof}

For $\I$-degrees $\boldsymbol{a}, \boldsymbol{b}, \boldsymbol{c}$ define
\[
[\boldsymbol{a}, \boldsymbol{b}]^{U^{\boldsymbol{c}}}
=\{\boldsymbol{x}\in U^{\boldsymbol{c}}\mid  \boldsymbol{a}
\leq^{\boldsymbol{c}} \boldsymbol{x} \leq^{\boldsymbol{c}} \boldsymbol{b}\}.
\]
This is clearly a ternary relation in $\boldsymbol{a}, \boldsymbol{b},
\boldsymbol{c}$, which is $\emptyset$-definable in $\Dark_{/\I}$.

\begin{defn}\label{def:tilde}
On pairs of dark $\I$-degrees we define the equivalence relation
$(\boldsymbol{c}, \boldsymbol{d}) \sim (\boldsymbol{c}',\boldsymbol{d}')$ if
the two pairs coincide, or $\boldsymbol{c}$ and $\boldsymbol{c}'$ are good
codes, $\boldsymbol{d}\in U^{\boldsymbol{c}}$ and $\boldsymbol{d}'\in
U^{\boldsymbol{c}'}$, and there exists a name $\boldsymbol{f}$ for a set of
pairs $F$ which is an order-preserving bijection between
$[\boldsymbol{0}^{\boldsymbol{c}},\boldsymbol{d}]^{U^{\boldsymbol{c}}}$ and
$[\boldsymbol{0}^{\boldsymbol{c}'},\boldsymbol{d}']^{U^{\boldsymbol{c}'}}$. That is, $G_{\boldsymbol{f}}$ is a graph so that for every $\boldsymbol{x}\in [\boldsymbol{0}^{\boldsymbol{c}},\boldsymbol{d}]^{U^{\boldsymbol{c}}}$, there exists a unique $\I$-minimal $\boldsymbol{y}$ so that there is a graph-label for the pair $(\boldsymbol{x},\boldsymbol{y})$ contained in $G_{\boldsymbol{f}}$. Further, this $\boldsymbol{y}$ is always in $[\boldsymbol{0}^{\boldsymbol{c}'},\boldsymbol{d}']^{U^{\boldsymbol{c}'}}$. Furthermore, this map that sends $\boldsymbol{x}\in [\boldsymbol{0}^{\boldsymbol{c}},\boldsymbol{d}]^{U^{\boldsymbol{c}}}$ to this unique $\boldsymbol{y}$ is an order-preserving bijection between $[\boldsymbol{0}^{\boldsymbol{c}},\boldsymbol{d}]^{U^{\boldsymbol{c}}}$ and
$[\boldsymbol{0}^{\boldsymbol{c}'},\boldsymbol{d}']^{U^{\boldsymbol{c}'}}$.
\end{defn}

\begin{lemma}\label{lem:sim-definable}
The relation $\sim$ is $\emptyset$-definable in the collection of dark $\I$-degrees.
\end{lemma}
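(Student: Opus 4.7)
The plan is to translate the definition of $\sim$ directly into a first-order formula in the language of posets, using only ingredients already known to be $\emptyset$-definable in $\Dark_{/\I}$: the ``good code'' predicate (Corollary~\ref{cor:good-definable}); the universe, order, zero, and interval relations $U^{\boldsymbol{c}}, \leq^{\boldsymbol{c}}, \boldsymbol{0}^{\boldsymbol{c}}, [\cdot,\cdot]^{U^{\boldsymbol{c}}}$ of the coded model (Remark~\ref{rem:from-N-to-po}); the vertex and edge relations $V(\cdot,\boldsymbol{f})$ and $E(\cdot,\cdot,\boldsymbol{f})$ of the graph $G_{\boldsymbol{f}}$ (Corollary~\ref{cor:def-of-vertices-and-edges}); and $\I$-minimality, which is first order over $\leq_\I$ (Lemma~\ref{lem:I-definability}).

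First I would introduce an auxiliary formula
\[
\mathrm{GL}(\boldsymbol{x},\boldsymbol{y},\boldsymbol{f}) \;\equiv\; \exists \boldsymbol{a}, \boldsymbol{b}, \boldsymbol{u}, \boldsymbol{v}\;\Psi(\boldsymbol{x},\boldsymbol{y},\boldsymbol{a},\boldsymbol{b},\boldsymbol{u},\boldsymbol{v},\boldsymbol{f}),
\]
where $\Psi$ asserts that $\boldsymbol{x},\boldsymbol{y},\boldsymbol{a},\boldsymbol{b},\boldsymbol{u},\boldsymbol{v}$ are pairwise distinct vertices of $G_{\boldsymbol{f}}$ connected by the six edges of a graph-label for $(\boldsymbol{x},\boldsymbol{y})$. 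With $\mathrm{GL}$ in hand, $\sim$ is expressed as the disjunction of $(\boldsymbol{c}=\boldsymbol{c}'\wedge\boldsymbol{d}=\boldsymbol{d}')$ and the conjunction of: $\boldsymbol{c}$ and $\boldsymbol{c}'$ are good codes; $\boldsymbol{d}\in U^{\boldsymbol{c}}$ and $\boldsymbol{d}'\in U^{\boldsymbol{c}'}$; and there exists a dark $\I$-degree $\boldsymbol{f}$ such that (i) for every $\boldsymbol{x}\in[\boldsymbol{0}^{\boldsymbol{c}},\boldsymbol{d}]^{U^{\boldsymbol{c}}}$ there is a unique $\I$-minimal $\boldsymbol{y}$ with $\mathrm{GL}(\boldsymbol{x},\boldsymbol{y},\boldsymbol{f})$; (ii) that $\boldsymbol{y}$ lies in $[\boldsymbol{0}^{\boldsymbol{c}'},\boldsymbol{d}']^{U^{\boldsymbol{c}'}}$; and (iii) the induced map is surjective onto $[\boldsymbol{0}^{\boldsymbol{c}'},\boldsymbol{d}']^{U^{\boldsymbol{c}'}}$ and order-preserving with respect to $\leq^{\boldsymbol{c}}$ and $\leq^{\boldsymbol{c}'}$. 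Each of clauses (i)--(iii) is directly expressible from the listed $\emptyset$-definable ingredients, so the full formula is $\emptyset$.

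The only point requiring care is to confirm that the existential quantification over dark $\I$-degrees $\boldsymbol{f}$ satisfying (i)--(iii) matches the existential quantification over \emph{names} in Definition~\ref{def:tilde}. The forward direction is immediate from the definition of ``name''. For the converse, if some $\boldsymbol{f}$ witnesses (i)--(iii), then the induced pairs form an order-preserving bijection $F$ between the two intervals. Since $U^{\boldsymbol{c}}$ and $U^{\boldsymbol{c}'}$ lie inside the non-isolated vertex sets of the respective graphs (Remark~\ref{lem:only-isolated}), Lemma~\ref{ZsAreIsolated} ensures that every element of these intervals is either dark minimal or $\mathbb{Z}$-dark minimal with a non-computable class. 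Lemma~\ref{namesExist} then produces a genuine name for $F$, as required.

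The whole argument is a direct syntactic translation of the definition; the only subtlety is this last equivalence, which is precisely what Lemma~\ref{namesExist} was proved to supply.
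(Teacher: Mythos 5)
Your proof is correct and proceeds by essentially the same route as the paper's one-line argument: a direct syntactic translation of Definition~\ref{def:tilde} into a first-order formula built from the $\emptyset$-definable ingredients of Corollary~\ref{cor:def-of-vertices-and-edges}, Remark~\ref{rem:from-N-to-po}, and Lemma~\ref{lem:I-definability}. The paper treats the translation as immediate because it reads the ``That is, \dots'' restatement in Definition~\ref{def:tilde} as the operative definition of $\sim$, and that restatement is already phrased in terms of graph-labels, intervals, and $\I$-minimality, all of which are manifestly definable.

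The one place where you add genuine content is the final paragraph, where you verify that quantifying over arbitrary dark $\I$-degrees $\boldsymbol{f}$ satisfying your clauses (i)--(iii) is equivalent to quantifying over \emph{names} in the sense of Definition~\ref{defn:finite-functions}. This is a legitimate concern, since a name for $F$ is a graph that consists \emph{exactly} of the union of graph-labels for the pairs in $F$, which is a stronger structural condition than (i)--(iii). Your resolution via Lemma~\ref{ZsAreIsolated} and Lemma~\ref{namesExist} is correct in spirit, but you should note that Lemma~\ref{namesExist} only produces names for \emph{finite} sets of pairs, whereas the interval $[\boldsymbol{0}^{\boldsymbol{c}},\boldsymbol{d}]^{U^{\boldsymbol{c}}}$ can be infinite when $\boldsymbol{d}$ is nonstandard in $U^{\boldsymbol{c}}$. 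For such pairs, the formula (i)--(iii) could in principle hold with no name witnessing the ``name'' reading. This discrepancy is harmless for the paper's purposes --- only pairs with finite intervals ever land in $\mathcal{N}$ under either reading, since a good code $\boldsymbol{c}$ with $U^{\boldsymbol{c}}\simeq\mathbb{N}$ has only finite intervals $[\boldsymbol{0}^{\boldsymbol{c}},\boldsymbol{d}]$ --- but if you want to insist on the ``name'' reading of $\sim$, the cleanest fix is simply to adopt the ``That is'' restatement as the definition, which is what the paper implicitly does and what your formula transcribes verbatim.
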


\begin{proof}
This is straightforward from the definition of $\sim$ and Remark~\ref{rem:from-N-to-po}.
\end{proof}

Finally, we define:

\begin{defn}
Let $\mathcal{N}$ be the set of $\sim$-equivalence classes of pairs
$(\boldsymbol{c},\boldsymbol{d})$ of dark $\I$-degrees so that for every good
code $\boldsymbol{c}'$, there exists a $\boldsymbol{d}'$ so that
$(\boldsymbol{c}, \boldsymbol{d})\sim (\boldsymbol{c}',\boldsymbol{d}')$.
\end{defn}

\begin{lemma}
$\mathcal{N}$ is $\emptyset$-definable in the dark $\I$-degrees.
\end{lemma}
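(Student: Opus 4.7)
The proof should be essentially immediate by assembling the previously established definability results. The plan is to exhibit an explicit first-order formula $\phi_{\mathcal{N}}(\boldsymbol{c},\boldsymbol{d})$ in the language of partial orders, without parameters, whose solution set (modulo $\sim$) is exactly $\mathcal{N}$.

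First I would write down the formula directly from the definition of $\mathcal{N}$:
\[
\phi_{\mathcal{N}}(\boldsymbol{c},\boldsymbol{d})\;:=\;\mathrm{Good}(\boldsymbol{c})\,\wedge\,U^{\boldsymbol{c}}(\boldsymbol{d})\,\wedge\,\forall \boldsymbol{c}'\bigl(\mathrm{Good}(\boldsymbol{c}')\to \exists \boldsymbol{d}'\,(\boldsymbol{c},\boldsymbol{d})\sim(\boldsymbol{c}',\boldsymbol{d}')\bigr).
\]
Each of the ingredients is already known to be $\emptyset$-definable in $\Dark_{/\I}$: the predicate $\mathrm{Good}$ is $\emptyset$-definable by Corollary~\ref{cor:good-definable}; the predicate ``$\boldsymbol{d}\in U^{\boldsymbol{c}}$'' is $\emptyset$-definable by Remark~\ref{rem:from-N-to-po} (via the $\emptyset$-definability of $V$ and $E$ in Corollary~\ref{cor:def-of-vertices-and-edges} together with the formula $U$ witnessing Theorem~\ref{thm:folklore}); and the binary relation $\sim$ on pairs is $\emptyset$-definable by Lemma~\ref{lem:sim-definable}. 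Since the class of $\emptyset$-definable relations is closed under Boolean combinations and first-order quantification, $\phi_{\mathcal{N}}$ is a $\emptyset$-definable formula in the structure $\Dark_{/\I}$.

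Next I would observe that by the very definition of $\mathcal{N}$, the pair $(\boldsymbol{c},\boldsymbol{d})$ represents an element of $\mathcal{N}$ precisely when $\phi_{\mathcal{N}}(\boldsymbol{c},\boldsymbol{d})$ holds, and two such pairs represent the same element of $\mathcal{N}$ iff they are $\sim$-equivalent. Thus $\mathcal{N}$ is presented as the quotient of the $\emptyset$-definable set $\{(\boldsymbol{c},\boldsymbol{d}):\phi_{\mathcal{N}}(\boldsymbol{c},\boldsymbol{d})\}$ by the $\emptyset$-definable equivalence relation $\sim$, which is exactly what is meant by $\mathcal{N}$ being $\emptyset$-definable in the dark $\I$-degrees.

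There is no real obstacle here: the genuine content has already been carried out in the prior lemmas (construction of good codes, construction of names via Lemma~\ref{namesExist}, definability of $\sim$, and the interpretation of $(\mathbb{N},+,\times)$ in graphs via Theorem~\ref{thm:folklore}). All that remains is the syntactic verification that the defining clause ``$\forall \boldsymbol{c}'\,(\mathrm{Good}(\boldsymbol{c}')\to\exists \boldsymbol{d}'\ldots)$'' can be formed by composing first-order quantifiers with the already-$\emptyset$-definable predicates, which is routine.
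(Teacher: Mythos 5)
Your proposal is correct and follows exactly the paper's approach: the paper's proof is the one-line observation "By Corollary~\ref{cor:good-definable} and Lemma~\ref{lem:sim-definable}," and you have simply written out the formula $\phi_{\mathcal{N}}$ that makes this explicit. Nothing is missing.
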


\begin{proof}
By Corollary~\ref{cor:good-definable} and Lemma~\ref{lem:sim-definable}.
\end{proof}

\begin{lemma}\label{lem:good-crucial}
Let $\boldsymbol{c}$ be a good code so that $(U^{\boldsymbol{c}},
\phi_+^{\boldsymbol{c}}, \phi_\times^{\boldsymbol{c}}) \simeq (\mathbb{N},
+,\times)$. Then $\mathcal{N}=\{[(\boldsymbol{c},\boldsymbol{d})]_\sim \mid
\boldsymbol{d}\in U^{\boldsymbol{c}}\}$ (where
$[(\boldsymbol{c},\boldsymbol{d})]_\sim$ denotes the equivalence class of the
pair $(\boldsymbol{c},\boldsymbol{d})$ under $\sim$).
\end{lemma}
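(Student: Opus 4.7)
The plan is to prove the two inclusions separately. The inclusion $\{[(\boldsymbol{c},\boldsymbol{d})]_\sim \mid \boldsymbol{d}\in U^{\boldsymbol{c}}\} \subseteq \mathcal{N}$ is the substantive one, and the reverse inclusion is essentially immediate from the definition of $\mathcal{N}$.

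For the reverse inclusion, suppose $[(\boldsymbol{c}'',\boldsymbol{d}'')]_\sim \in \mathcal{N}$. Since $\boldsymbol{c}$ is itself a good code, the defining property of $\mathcal{N}$ supplies some $\boldsymbol{d}$ with $(\boldsymbol{c}'',\boldsymbol{d}'')\sim (\boldsymbol{c},\boldsymbol{d})$. The definition of $\sim$ forces $\boldsymbol{d}\in U^{\boldsymbol{c}}$, so $[(\boldsymbol{c}'',\boldsymbol{d}'')]_\sim = [(\boldsymbol{c},\boldsymbol{d})]_\sim$ has the desired form.

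For the forward inclusion, fix $\boldsymbol{d}\in U^{\boldsymbol{c}}$ and an arbitrary good code $\boldsymbol{c}'$. Because $(U^{\boldsymbol{c}}, \phi_+^{\boldsymbol{c}}, \phi_\times^{\boldsymbol{c}})$ is the standard model, $\boldsymbol{d}$ corresponds to some standard natural number $n$; explicitly $\boldsymbol{d}$ is the $n$-th iterate of $\phi_S^{\boldsymbol{c}}$ applied to $\boldsymbol{0}^{\boldsymbol{c}}$. Set $\boldsymbol{d}'$ to be the $n$-th iterate of $\phi_S^{\boldsymbol{c}'}$ applied to $\boldsymbol{0}^{\boldsymbol{c}'}$, which exists in $U^{\boldsymbol{c}'}$ since $Q$ proves the totality of successor. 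For each standard $n$, Robinson's $Q$ proves that $x \leq S^n(0) \leftrightarrow \bigvee_{k\leq n} x = S^k(0)$, so in every model of $Q$ the interval $[\boldsymbol{0}, S^n(\boldsymbol{0})]$ consists of exactly the $n+1$ elements $\boldsymbol{0}, \phi_S(\boldsymbol{0}), \ldots, \phi_S^n(\boldsymbol{0})$, linearly ordered by $\leq$. Hence the intervals $[\boldsymbol{0}^{\boldsymbol{c}},\boldsymbol{d}]^{U^{\boldsymbol{c}}}$ and $[\boldsymbol{0}^{\boldsymbol{c}'},\boldsymbol{d}']^{U^{\boldsymbol{c}'}}$ are both finite linear orders of length $n+1$, and there is a unique order-preserving bijection $F$ between them, pairing $\phi_S^{\boldsymbol{c}\,k}(\boldsymbol{0}^{\boldsymbol{c}})$ with $\phi_S^{\boldsymbol{c}'\,k}(\boldsymbol{0}^{\boldsymbol{c}'})$ for each $k\leq n$.

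It remains to produce a name $\boldsymbol{f}$ for the finite set of pairs $F$, which will witness $(\boldsymbol{c},\boldsymbol{d})\sim(\boldsymbol{c}',\boldsymbol{d}')$. Each element appearing in $F$ is a vertex of some $G_{\boldsymbol{c}''}$, hence an $\I$-minimal $\I$-degree; by Remark \ref{lem:only-isolated} these are non-isolated vertices of the relevant graph, so by Lemma \ref{ZsAreIsolated} each such $\I$-degree is represented by either a dark minimal ceer or a $\mathbb{Z}$-dark minimal ceer with a non-computable class. Lemma \ref{namesExist} therefore applies and yields a dark $\I$-degree $\boldsymbol{f}$ which is a name for $F$. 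Together with the order-preserving bijection property this gives $(\boldsymbol{c},\boldsymbol{d})\sim(\boldsymbol{c}',\boldsymbol{d}')$, and since $\boldsymbol{c}'$ was arbitrary we conclude $[(\boldsymbol{c},\boldsymbol{d})]_\sim\in\mathcal{N}$. The only subtle point — the main obstacle — is verifying that the elements populating the intervals satisfy the hypotheses of Lemma \ref{namesExist}, which is exactly what Remark \ref{lem:only-isolated} together with Lemma \ref{ZsAreIsolated} provides.
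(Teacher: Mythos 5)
Your proof is correct and follows essentially the same route as the paper: use the standard-model hypothesis to reduce to finite intervals, build the unique order-preserving bijection, and then appeal to Lemma~\ref{ZsAreIsolated} (together with Observation~\ref{inseparabilityDarkMinimal} for the dark minimal case) to verify the hypotheses of Lemma~\ref{namesExist}, while the reverse inclusion falls out of the definition of $\mathcal{N}$. You fill in more of the $Q$-provable details about bounded intervals than the paper does, which is a reasonable elaboration of the same argument.
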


\begin{proof}
Every model of Robinson's $Q$ has a standard part isomorphic to
$(\mathbb{N},+,\cdot)$. Note that if $R$ is in a degree in $U^{\boldsymbol
{c}'}$ for any good $\boldsymbol{c}'$, then $R$ must have a non-computable
class by Lemma \ref{ZsAreIsolated}. By Lemma \ref{namesExist}, this shows
that $\{[(\boldsymbol{c},\boldsymbol{d})]_\sim\mid \boldsymbol{d}\in
U^{\boldsymbol{c}}\}\subseteq \mathcal{N}$. For the converse, if
$[(\boldsymbol{c}',\boldsymbol{d}')]_\sim\in \mathcal{N}$, then by definition
of $\mathcal{N}$, there must be some $\boldsymbol{d}\in U^{\boldsymbol{c}}$
so that $(\boldsymbol{c}',\boldsymbol{d}')\sim(\boldsymbol{c},
\boldsymbol{d})$. Thus $\mathcal{N}\subseteq
\{[(\boldsymbol{c},\boldsymbol{d})]_\sim\mid \boldsymbol{d}\in
U^{\boldsymbol{c}}\}$.
\end{proof}

This allows us to define, without parameters, $+$ and $\times$ on
$\mathcal{N}$. For instance,

\begin{defn}
Let $\boldsymbol{c}$ be a good code so that $(U^{\boldsymbol{c}},
\phi_+^{\boldsymbol{c}}, \phi_\times^{\boldsymbol{c}}) \simeq (\mathbb{N},
+,\times)$. We define
$[(\boldsymbol{c},\boldsymbol{d})]_\sim+[(\boldsymbol{c},
\boldsymbol{d}')]_\sim=[(\boldsymbol{c},
\phi_+^{\boldsymbol{c}}(\boldsymbol{d},\boldsymbol{d}'))]_\sim$, and
similarly for the other operation.
\end{defn}

\begin{lemma}
The definition of $+$ and $\times$ on $\mathcal{N}$ does not depend on the
choice of $\boldsymbol{c}$ and is $\emptyset$-definable in the
partial order of $\Dark_{/\I}$ degrees.
\end{lemma}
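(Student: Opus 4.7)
The plan is to express $+$ and $\times$ directly as $\emptyset$-definable ternary relations on $\mathcal{N}$ by existentially quantifying over good codes, and then to verify functionality, which will give both agreement with the original definition and independence from the choice of $\boldsymbol{c}$. I would work through $+$ in detail; the case of $\times$ is identical with $\phi_\times^{\boldsymbol{c}}$ in place of $\phi_+^{\boldsymbol{c}}$.

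First I would introduce the ternary relation $\mathrm{Sum}$ which holds on $((\boldsymbol{c}_1, \boldsymbol{d}_1), (\boldsymbol{c}_2, \boldsymbol{d}_2), (\boldsymbol{c}_3, \boldsymbol{d}_3))$ iff each $[(\boldsymbol{c}_i, \boldsymbol{d}_i)]_\sim$ lies in $\mathcal{N}$ and there exist a good code $\boldsymbol{c}$ and $\boldsymbol{a}_1, \boldsymbol{a}_2, \boldsymbol{a}_3 \in U^{\boldsymbol{c}}$ with $(\boldsymbol{c}, \boldsymbol{a}_i) \sim (\boldsymbol{c}_i, \boldsymbol{d}_i)$ for each $i$ and $\phi_+^{\boldsymbol{c}}(\boldsymbol{a}_1, \boldsymbol{a}_2) = \boldsymbol{a}_3$. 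By Corollary~\ref{cor:good-definable}, Remark~\ref{rem:from-N-to-po}, Lemma~\ref{lem:sim-definable}, and the $\emptyset$-definability of $\mathcal{N}$, this relation is $\emptyset$-definable, and transitivity of $\sim$ makes it $\sim$-invariant in each coordinate, so it descends to a ternary relation $\overline{\mathrm{Sum}}$ on $\mathcal{N}$. Totality of $\overline{\mathrm{Sum}}$ follows from Theorem~\ref{codesExist} applied to the graph $G$ of Theorem~\ref{thm:folklore}, which produces a good code $\boldsymbol{c}^\star$ with $U^{\boldsymbol{c}^\star} \simeq (\mathbb{N}, +, \times)$: Lemma~\ref{lem:good-crucial} lets me pick $U^{\boldsymbol{c}^\star}$-representatives of any two elements of $\mathcal{N}$, and $\phi_+^{\boldsymbol{c}^\star}$ produces a valid third coordinate. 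Taking $\boldsymbol{c}^\star$ itself as the existential witness in $\mathrm{Sum}$ also shows that $\overline{\mathrm{Sum}}$ reproduces the original definition of $+$ for this particular $\boldsymbol{c}^\star$.

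The main obstacle is functionality of $\overline{\mathrm{Sum}}$: given two witnesses $(\boldsymbol{c}, \boldsymbol{a}_1, \boldsymbol{a}_2, \boldsymbol{a}_3)$ and $(\boldsymbol{c}', \boldsymbol{b}_1, \boldsymbol{b}_2, \boldsymbol{b}_3)$ in which the pairs $(\boldsymbol{c}, \boldsymbol{a}_i)$ and $(\boldsymbol{c}', \boldsymbol{b}_i)$ are $\sim$-equivalent for $i = 1, 2$, I need $(\boldsymbol{c}, \boldsymbol{a}_3) \sim (\boldsymbol{c}', \boldsymbol{b}_3)$. The key point is that Lemma~\ref{lem:good-crucial} gives a bijection $\mathcal{N} \to \mathbb{N}$ through $\boldsymbol{c}^\star$, and since $\sim$ demands an order-preserving bijection of initial segments, any $\boldsymbol{a} \in U^{\boldsymbol{c}}$ with $[(\boldsymbol{c}, \boldsymbol{a})]_\sim \in \mathcal{N}$ lies in the standard part of $U^{\boldsymbol{c}}$ and corresponds to the natural number associated with its $\sim$-class; since $\boldsymbol{c}$ and $\boldsymbol{c}'$ both model $Q$, their standard parts are closed under $\phi_+$ and addition there coincides with $\mathbb{N}$-addition, so both $\boldsymbol{a}_3$ and $\boldsymbol{b}_3$ correspond to the same natural $n_1 + n_2$ where $n_i$ is the natural assigned to $[(\boldsymbol{c}_i, \boldsymbol{d}_i)]_\sim$. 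The initial segments $[\boldsymbol{0}^{\boldsymbol{c}}, \boldsymbol{a}_3]^{U^{\boldsymbol{c}}}$ and $[\boldsymbol{0}^{\boldsymbol{c}'}, \boldsymbol{b}_3]^{U^{\boldsymbol{c}'}}$ are therefore finite of cardinality $n_1 + n_2 + 1$, and by Remark~\ref{lem:only-isolated} together with Lemma~\ref{ZsAreIsolated}, every element of $U^{\boldsymbol{c}}$ or $U^{\boldsymbol{c}'}$ is the $\I$-degree of a dark minimal ceer or of a $\mathbb{Z}$-dark minimal ceer with a non-computable class. Lemma~\ref{namesExist} then supplies a name for the order-preserving bijection between the two initial segments, witnessing $(\boldsymbol{c}, \boldsymbol{a}_3) \sim (\boldsymbol{c}', \boldsymbol{b}_3)$ as required. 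With functionality established, $\overline{\mathrm{Sum}}$ is the graph of a binary operation on $\mathcal{N}$ that is $\emptyset$-definable in $\Dark_{/\I}$ and that agrees with the given definition of $+$ for every good code $\boldsymbol{c}$ with $U^{\boldsymbol{c}} \simeq \mathbb{N}$; the parallel argument with $\phi_\times^{\boldsymbol{c}}$ in place of $\phi_+^{\boldsymbol{c}}$ handles $\times$.
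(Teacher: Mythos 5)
Your proposal is correct and takes essentially the same route as the paper: you write down the same existential formula over a common good code $\hat{\boldsymbol{c}}$ to define the graph of $+$ (and $\times$), which is exactly how the paper establishes $\emptyset$-definability. The only difference is that the paper dismisses well-definedness as ``immediate from the definitions,'' while you spell out the functionality argument in detail (via the standard model $\boldsymbol{c}^\star$ from Lemma~\ref{lem:good-crucial}, the isomorphism of standard parts of models of $Q$, and Lemma~\ref{namesExist} to produce the required name); this expansion is accurate and faithful to what the paper leaves implicit.
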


\begin{proof}
The first claim is immediate from the definitions. We can define addition by
saying that $[(\boldsymbol{c},\boldsymbol{d})]_\sim
+[([\boldsymbol{c}',\boldsymbol{d}'])]_\sim =
[(\boldsymbol{c}'',\boldsymbol{d}'')]_\sim$ if and only if there exists a
good code $\hat{\boldsymbol{c}}$ and
$\boldsymbol{e},\boldsymbol{e}',\boldsymbol{e}''\in U^{\hat{c}}$ so that
$(\boldsymbol{c},\boldsymbol{d})\sim (\hat{\boldsymbol{c}},\boldsymbol{e})$,
$(\boldsymbol{c}',\boldsymbol{d}')\sim
(\hat{\boldsymbol{c}},\boldsymbol{e}')$,
$(\boldsymbol{c}'',\boldsymbol{d}'')\sim (\hat{c},\boldsymbol{e}'')$, and
that $\phi_+^{\hat{\boldsymbol{c}}}(\boldsymbol{e},\boldsymbol{e}')=
\boldsymbol{e}''$.		
\end{proof}

It is immediate that $(\mathcal{N},+,\times)$ is isomorphic to
$(\mathbb{N},+,\times )$, and thus we have proved:

\begin{thm}\label{thm:theory-of-I-dark}
There is an interpretation without parameters of $(\mathbb{N},+,\times)$ in the partial order
$\Dark_{/\I}$. Thus $\Th(\Dark_{/\I})$ is computably isomorphic to
$\Th^1(\mathbb{N})$.
\end{thm}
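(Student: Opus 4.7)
The plan is to assemble the machinery already built in this section into a single first-order interpretation and then invoke Myhill's theorem. The universe of the interpretation will be the $\emptyset$-definable set $\mathcal{N}$ of $\sim$-classes, with operations $+$ and $\times$ defined on $\mathcal{N}$ (both $\emptyset$-definable, as shown earlier). All that remains is to exhibit a good code whose encoded model of $Q$ is the standard model $(\mathbb{N},+,\times)$, and then verify that the resulting interpreted structure is indeed isomorphic to $(\mathbb{N},+,\times)$.

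To produce the required good code, I would apply Theorem~\ref{codesExist} to the computable graph $G$ provided by Theorem~\ref{thm:folklore}. This yields a dark $\I$-degree $\boldsymbol{c}$ for which $G_{\boldsymbol{c}}$ is isomorphic to $G$ together with some isolated vertices; by Remark~\ref{lem:only-isolated} these isolated vertices are discarded and $G_{\boldsymbol{c}}\simeq G$. Since $U,\phi_+,\phi_\times$ define $(\mathbb{N},+,\times)$ inside $G$, the induced structure $(U^{\boldsymbol{c}},\phi_+^{\boldsymbol{c}},\phi_\times^{\boldsymbol{c}})$ is isomorphic to $(\mathbb{N},+,\times)$, so $\boldsymbol{c}$ is a good code and satisfies the hypothesis of Lemma~\ref{lem:good-crucial}.

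Next I would read off the conclusion from Lemma~\ref{lem:good-crucial} and the preceding definitions: the map sending $n\in\mathbb{N}$ (identified with the corresponding $\boldsymbol{d}\in U^{\boldsymbol{c}}$) to $[(\boldsymbol{c},\boldsymbol{d})]_\sim$ is a bijection onto $\mathcal{N}$, and by the $\emptyset$-definable recipe for the operations on $\mathcal{N}$ this bijection transports $\phi_+^{\boldsymbol{c}}$ and $\phi_\times^{\boldsymbol{c}}$ to the arithmetical $+$ and $\times$ on $\mathcal{N}$. Thus $(\mathcal{N},+,\times)\cong(\mathbb{N},+,\times)$, which is precisely an interpretation of $(\mathbb{N},+,\times)$ in $\Dark_{/\I}$ without parameters.

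For the second assertion, this interpretation yields an effective translation $\sigma\mapsto\tilde\sigma$ such that $\mathbb{N}\models\sigma$ iff $\Dark_{/\I}\models\tilde\sigma$, so $\Th^1(\mathbb{N})\leq_1\Th(\Dark_{/\I})$. Combining this with the reverse reduction $\Th(\Dark_{/\I})\leq_1\Th^1(\mathbb{N})$ from Lemma~\ref{lem:one-way} and applying the Myhill Isomorphism Theorem delivers computable isomorphism. There is no real obstacle at this stage; the genuinely nontrivial work has already been carried out in Sections~\ref{Coding Using Parameters} and~\ref{sct:dark-arithmetic}, where the graph coding was developed and the standard part of each good code was isolated uniformly via the names for finite functions.
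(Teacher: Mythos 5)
Your proposal is correct and follows the same route as the paper: the paper's stated proof is simply the one-line wrap-up "$\Th^1(\mathbb{N}) \leq_1 \Th(\Dark_{/\I})$ by the foregoing, plus Lemma~\ref{lem:one-way}," and what you have written is exactly the assembly that "the foregoing" refers to (a good code from Theorems~\ref{codesExist} and~\ref{thm:folklore}, Lemma~\ref{lem:good-crucial} to identify $\mathcal{N}$ with that code's standard universe, the $\emptyset$-definable operations, and Myhill). No gaps; this is the paper's argument made explicit.
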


\begin{proof}
We have just shown that $\Th^1(\mathbb{N}) \leq_1 \Th(\Dark_{/\I})$. On the
other hand, $\Th(\Dark_{/\I}) \leq_1 \Th^1(\mathbb{N})$ by
Lemma~\ref{lem:one-way}.
\end{proof}

\begin{corollary}
$\Th(\Dark)$ and $\Th(\Ceers)$ are computably isomorphic to $\Th^1(\mathbb{N})$.
\end{corollary}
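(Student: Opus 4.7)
The plan is to bootstrap the previous theorem: we already have $\Th^1(\mathbb{N}) \leq_1 \Th(\Dark_{/\I})$ from Theorem~\ref{thm:theory-of-I-dark}, so what remains is to transfer the interpretation of $(\mathbb{N},+,\times)$ up from $\Dark_{/\I}$ into first $\Dark$ and then $\Ceers$, and then invoke Lemma~\ref{lem:one-way} together with Myhill's isomorphism theorem.

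First I would handle $\Dark$. By Lemma~\ref{lem:I-definability}, the relation $\equiv_\I$ is $\emptyset$-definable in $\Dark$, so the quotient $\Dark_{/\I}$ is interpretable without parameters in $\Dark$: its universe is $\Dark$ itself, its equivalence relation is $\equiv_\I$, and its order is $\leq_\I$, all $\emptyset$-definable. Composing this $\emptyset$-interpretation with the $\emptyset$-interpretation of $(\mathbb{N},+,\times)$ in $\Dark_{/\I}$ produced in Theorem~\ref{thm:theory-of-I-dark} yields an $\emptyset$-interpretation of $(\mathbb{N},+,\times)$ in $\Dark$, and in particular a 1-reduction $\Th^1(\mathbb{N})\leq_1 \Th(\Dark)$.

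For $\Ceers$ I would do the same in two stages: by \cite[Corollary 8.1]{Andrews-Sorbi}, the class $\Dark$ is $\emptyset$-definable in $\Ceers$ (it is cited in the introduction and in Lemma~\ref{lem:one-way}), so $\Dark$ is $\emptyset$-interpretable in $\Ceers$. Composing with the interpretation from the previous paragraph gives an $\emptyset$-interpretation of $(\mathbb{N},+,\times)$ in $\Ceers$, hence $\Th^1(\mathbb{N}) \leq_1 \Th(\Ceers)$.

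Finally, Lemma~\ref{lem:one-way} supplies the reverse reductions $\Th(\Dark)\leq_1 \Th^1(\mathbb{N})$ and $\Th(\Ceers)\leq_1 \Th^1(\mathbb{N})$. Combining both directions we obtain 1-equivalence in each case, and by the Myhill isomorphism theorem the two theories are then computably isomorphic to $\Th^1(\mathbb{N})$. There is no real obstacle here; the only thing to be slightly careful about is confirming that both interpretations involved (the $\equiv_\I$ quotient and the identification of $\Dark$ inside $\Ceers$) are genuinely parameter-free, which is exactly what Lemma~\ref{lem:I-definability} and \cite[Corollary 8.1]{Andrews-Sorbi} supply.
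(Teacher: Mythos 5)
Your argument is correct and matches the paper's proof exactly: both use Lemma~\ref{lem:I-definability} to interpret $\Dark_{/\I}$ in $\Dark$ without parameters, \cite[Corollary 8.1]{Andrews-Sorbi} to place $\Dark$ $\emptyset$-definably inside $\Ceers$, and Lemma~\ref{lem:one-way} for the reverse reductions, with Myhill supplying the upgrade to computable isomorphism. No gaps.
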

\begin{proof}
$\I$-equivalence is $\emptyset$-definable in $\Dark$ by Lemma~\ref{lem:I-definability} and $\Dark$ is $\emptyset$-definable in $\Ceers$ by \cite[Corollary 8.1]{Andrews-Sorbi}.
Thus we have $\Th^1(\mathbb{N}) \leq_1 \Th(\Dark_{/\I}) \leq_1
\Th(\Dark)\leq_1 \Th(\Ceers)\leq_1 \Th^1(\mathbb{N})$.
\end{proof}

Finally,

\begin{thm}
The theory of the partial order of $\Ceers_{/\I}$ is $1$-equivalent to the
theory of true arithmetic.
\end{thm}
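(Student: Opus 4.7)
The plan is to lift the interpretation from Theorem~\ref{thm:theory-of-I-dark} to $\Ceers_{/\I}$. The direction $\Th(\Ceers_{/\I})\leq_1 \Th^1(\mathbb{N})$ is already given by Lemma~\ref{lem:one-way}, so the task is to interpret $(\mathbb{N},+,\cdot)$ without parameters in $\Ceers_{/\I}$.

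I would use the same defining formulas from Section~\ref{sct:dark-arithmetic} with two small adaptations: $\Ceers_{/\I}$ has a $\emptyset$-definable bottom element $\bot$ (the $\I$-class of the finite ceers), and ``minimal $\I$-degree'' in Definition~\ref{def:vertices-edges} is replaced by ``atom above $\bot$''. The key observation is that if $\boldsymbol{c}$ is a dark $\I$-degree then $\deg_\I(\Id)\not\leq_\I \boldsymbol{c}$ (by Fact~\ref{fct:dark-closure}, since $\boldsymbol{c}\oplus \Id_k$ remains dark), so the atoms $\leq \boldsymbol{c}$ in $\Ceers_{/\I}$ coincide with the minimal $\I$-degrees $\leq \boldsymbol{c}$ in $\Dark_{/\I}$. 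Consequently Theorem~\ref{codesExist} still produces a standard dark good code $\boldsymbol{c}_0$ whose graph is isomorphic to the one of Theorem~\ref{thm:folklore}, and Lemma~\ref{namesExist} still supplies the required dark names between arbitrary pairs of dark good codes.

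The crucial new step is to rule out spurious good codes. Quantification over $\Ceers_{/\I}$ now also ranges over light $\I$-degrees, so one must check that the extra atom $\deg_\I(\Id)$, which lies below every light $\I$-degree, does not disrupt the interpreted model of $Q$. I would show that $\deg_\I(\Id)$ is always an isolated vertex of $G_{\boldsymbol{c}}$, whence it is filtered out by the $NI$ predicate from Corollary~\ref{cor:def-of-vertices-and-edges} and Remark~\ref{lem:only-isolated}. This reduces to showing that $(\deg_\I(\Id),\boldsymbol{d})$ admits no two $\I$-incomparable $\I$-strongly minimal covers for any dark atom $\boldsymbol{d}$, and I expect this to be the main obstacle; the route I would take is to extend Lemma~\ref{joinsforZdarkmins} to $\Id$, whose proof goes through once one observes that the ``$Z$ infinite'' case now yields a reduction $\Id \leq D$, contradicting that $D$ is dark. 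This gives $\deg_\I(\Id\oplus D)$ as the unique least upper bound of $\deg_\I(\Id)$ and $\boldsymbol{d}$, so no two incomparable $\I$-strongly minimal covers exist and $\deg_\I(\Id)$ is isolated in every $G_{\boldsymbol{c}}$. Once this is established, the non-isolated subgraph of $G_{\boldsymbol{c}}$ in $\Ceers_{/\I}$ agrees with the one computed in $\Dark_{/\I}$, the analog of Lemma~\ref{lem:good-crucial} goes through unchanged, and the $\emptyset$-definability of $+$ and $\times$ on $\mathcal{N}$ is preserved, yielding $\Th^1(\mathbb{N})\leq_1 \Th(\Ceers_{/\I})$.
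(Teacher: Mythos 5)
Your proposal is correct and follows essentially the same route as the paper: reuse the definitions of $G_{\boldsymbol{c}}$, $\sim$, and $\mathcal{N}$ from the dark case, observe that the only new wrinkle is the possible appearance of $\bId$ among the atoms below a light good code, and neutralize it by showing $\bId$ is always an isolated vertex because it has a least upper bound with every relevant $\I$-degree. The paper handles the last step by simply citing \cite[Observation 5.1 and Lemma 6.5]{Andrews-Sorbi} for the fact that $\bId$ joins with every $\I$-degree; you instead rederive the special case needed (join with dark $\I$-degrees) by re-running the argument of Lemma~\ref{joinsforZdarkmins} with $R=\Id$, noting that the ``$Z$ infinite'' branch directly produces $\Id\leq S$ without needing cofiniteness of $Z$, which contradicts darkness of $S$. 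That variant works and is self-contained, but it proves slightly less than what the cited result gives; either suffices, since any vertex of $G_{\boldsymbol{c}}$ other than $\bId$ is a dark atom. Your remaining remarks (replacing ``$\I$-minimal'' by ``atom above $\boldsymbol{0}_\I$'', and that for dark $\boldsymbol{c}$ the atom set is unchanged) are correct bookkeeping that the paper leaves implicit.
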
	

\begin{proof}
We use the same definition of $G_{\boldsymbol{c}}$ to code graphs and the same definition of $\sim$ to give the same definition of $(\mathbb{N},+,\cdot)$. The only subtlety in this case is that perhaps $\boldsymbol{\Id}$, the $\I$-degree of $\Id$, could be in $G_{\boldsymbol{c}}$ for some $\boldsymbol{c}$. Lemma \ref{namesExist} does not give us names for sets of pairs including $\boldsymbol{\Id}$. This is solved just as the case of $\mathbb{Z}$-dark minimal ceers with all computable classes. Namely, such elements must be isolated if they appear in $G_{\boldsymbol{c}}$.

By \cite[Observation 5.1 and Lemma 6.5]{Andrews-Sorbi}, $\boldsymbol{\Id}$ has a least upper bound with every $\I$-degree. Thus, it cannot have two $\I$-strongly minimal covers with any degree and thus if it appears in $G_{\boldsymbol{c}}$, it is isolated, and thus makes no difference in our definition of $(\mathbb{N},+,\cdot)$.
\end{proof}

\section{Coding graphs into the partial order $\Light_{/\I}$ using
parameters}\label{LightGraphs}

We now turn our attention to the structure $\Light_{/\I}$. We once again need to show that any computable graph can be encoded in $\Light_{/\I}$. To do this, we use the analogous result in $\Dark_{/\I}$ and an embedding $\iota$ of $\Dark_{/\I}$ into $\Light_{/\I}$.

Recall that the symbol $\bId$ denotes the $\I$-degree of the identity ceer $\Id$.

\begin{defn}
We say that a light $\I$-degree $\boldsymbol{e}$ is \emph{light minimal} if
it is $>_{\I} \bId$ and  $(\bId, \boldsymbol{e})_{\I}$ is empty, where
$(\bId, \boldsymbol{e})_{\I}$ is the interval of  $\I$-degrees
$\boldsymbol{x}$ such that $\bId <_{\I} \boldsymbol{x} <_{\I}
\boldsymbol{e}$.
\end{defn}

Note that the property of being light minimal is $\emptyset$-definable in the partial
order $\Light_{/\I}$. The following is Theorem 6.2 in \cite{Andrews-Sorbi}.

\begin{lemma}\label{lem:i-embed}
The map $\iota:X\mapsto X\oplus \Id$ induces an embedding of $\Dark_{/\I}$
into $\Light_{/\I}$.
\end{lemma}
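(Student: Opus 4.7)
The plan is to verify the three conditions making $\iota$ an order embedding: (i) its image lies in $\Light_{/\I}$, (ii) $\iota$ is well-defined and monotone, and (iii) $\iota$ reflects $\leq_\I$. Items (i) and (ii) are bookkeeping; (iii) is where darkness of $X$ does the essential work. For (i), $\Id \leq X \oplus \Id$ via $n \mapsto 2n+1$, so $X \oplus \Id$ is light. For (ii), if $X \leq_\I Y$, i.e.\ $X \leq Y \oplus \Id_k$ for some $k$, then $X \oplus \Id \leq Y \oplus \Id \oplus \Id_k \equiv Y \oplus \Id$ (using $\Id_k \oplus \Id \equiv \Id$), so $\iota(X) \leq_\I \iota(Y)$. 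Well-definedness then follows by symmetry.

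For (iii), suppose $X \oplus \Id \leq_\I Y \oplus \Id$ with $X$ dark. Then $X \oplus \Id \leq Y \oplus \Id \oplus \Id_k$ for some $k$, and via $\Id_k \oplus \Id \equiv \Id$ we absorb this to $X \oplus \Id \leq Y \oplus \Id$. Precomposing with $X \leq X \oplus \Id$ given by $x \mapsto 2x$ yields a reduction $f \colon X \to Y \oplus \Id$. Let $B = \{x : f(x) \text{ is odd}\}$, a computable set. The crux is the claim that $B$ meets only finitely many $X$-classes. Otherwise I would c.e.-enumerate an infinite transversal of $X$ inside $B$: enumerate $B$ as $b_0, b_1, \dots$ and put $b_s$ into $T$ precisely when $f(b_s) \neq f(b_i)$ for every $i < s$ with $b_i \in T$. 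Since $f(B)$ consists of odd numbers and $\Id$ on the odd side is literal equality, two elements of $B$ are $X$-equivalent iff $f$ assigns them the same value; thus $T$ is a transversal, and if $B$ meets infinitely many $X$-classes then infinitely many distinct odd $f$-values appear, so $T$ is infinite, contradicting darkness of $X$. Granted the claim, let $\{2k_0+1,\dots,2k_{m-1}+1\}$ enumerate the finitely many odd values of $f$ on $B$, and define $g \colon X \to Y \oplus \Id_m$ by $g(x) = f(x)$ when $f(x)$ is even and $g(x) = 2i+1$ when $f(x) = 2k_i+1$. A direct check shows $g$ is a reduction, yielding $X \leq Y \oplus \Id_m$, i.e.\ $X \leq_\I Y$.

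The main obstacle is precisely this darkness step. In general $X$-equivalence is merely $\Sigma^0_1$, so one cannot hope to reject candidates for an effective transversal. What makes the argument succeed is that on $B$ the reduction $f$ lands in the $\Id$-half of $Y \oplus \Id$, where equivalence is literal equality; this lets us detect non-$X$-equivalence on $B$ by a decidable test, so the transversal construction inside $B$ becomes effective and produces the contradiction.
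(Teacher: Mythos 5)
Your proof is correct and follows essentially the same route as the paper's: reduce to showing $X\oplus\Id \leq_\I Y\oplus\Id$ iff $X \leq_\I Y$ by absorbing finite pieces into $\Id$ and then using darkness of $X$ to bound the portion of a reduction that lands in the $\Id$-half. You simply spell out the darkness step more explicitly (the paper only says "the darkness of $X$ guarantees the reduction cannot be infinite into the $\Id$ part"), and your effective-transversal argument for why the preimage $B$ of the odd side meets only finitely many $X$-classes is a clean justification of that claim.
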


\begin{proof}
For two dark ceers $X,Y$ we have
\begin{align*}
X\oplus \Id\leq_{\I} Y\oplus \Id
           &\Leftrightarrow X\oplus \Id \leq Y\oplus \Id\\
           &\Leftrightarrow X\leq Y\oplus \Id
\end{align*}
the last equivalence coming from that fact that $Y\oplus \Id\oplus \Id \equiv
Y\oplus \Id$ and thus if $X\leq Y\oplus \Id$ then $X\oplus \Id  \leq Y\oplus
\Id$. On the other hand, $X\leq Y\oplus \Id$ if and only if $X\leq Y\oplus
\Id_k$ for some $k$, because the darkness of $X$ guarantees that the
reduction cannot be infinite into the $\Id$ part. But since $X\leq Y\oplus
\Id_k$, for some $k$,  is just the definition of $X\leq_\I Y$, we see
\[
 X\leq_\I Y \Leftrightarrow X\oplus \Id\leq_{\I} Y\oplus \Id.
\]
\end{proof}

We will thus also use $\iota$ to refer to the induced map on $\I$-degrees.

\begin{lemma}\label{lem:i-onto}
If $X$ is dark, $Y$ is light, and $Y\leq X\oplus \Id$, then $Y \equiv \Id$ or
there is a dark ceer $Z\leq X$ so that $Y\equiv Z\oplus \Id$.
\end{lemma}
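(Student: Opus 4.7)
The plan is to decompose $Y$ via Fact~\ref{fct:restriction} and then use case analysis based on the darkness/lightness of the summands.

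First, applying Fact~\ref{fct:restriction} to the reduction $Y \leq X \oplus \Id$, I obtain ceers $Y_1 \leq X$ and $Y_2 \leq \Id$ such that $Y \equiv Y_1 \oplus Y_2$. Since $Y_2 \leq \Id$, the ceer $Y_2$ is either finite or $\equiv \Id$ (any computable reduction into $\Id$ has as range either a finite set of equivalence classes or an infinite one, giving $\equiv \Id$ in the latter case by Fact~\ref{fact:trivial}(2)). Similarly, since $Y_1 \leq X \leq_{\I} X$ with $X$ dark, Fact~\ref{fct:dark-closure}(1) forces $Y_1$ to be either finite or dark.

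Now I split into the four resulting cases. If $Y_1$ is dark and $Y_2$ is finite, then by Fact~\ref{fct:dark-closure}(2) the uniform join $Y_1 \oplus Y_2$ is dark, contradicting the lightness of $Y \equiv Y_1 \oplus Y_2$; so this case is impossible. If both $Y_1$ and $Y_2$ are finite, then $Y$ itself is finite, again contradicting lightness. If $Y_1$ is finite and $Y_2 \equiv \Id$, then $Y \equiv Y_1 \oplus \Id$; since $\Id$ has infinitely many classes, adjoining finitely many extra classes does not change its degree, so $Y \equiv \Id$ as required. Finally, if $Y_1$ is dark and $Y_2 \equiv \Id$, then $Y \equiv Y_1 \oplus \Id$, so setting $Z := Y_1$ gives a dark ceer with $Z \leq X$ and $Y \equiv Z \oplus \Id$.

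There is no real obstacle here: the only thing to verify carefully is that all four combinations are handled and that the ``finite $\oplus \Id \equiv \Id$'' absorption is used correctly in the third case. The lemma then falls out directly from the Fact~\ref{fct:restriction} decomposition together with the closure properties in Fact~\ref{fct:dark-closure}.
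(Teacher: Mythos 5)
Your proof is correct and takes essentially the same route as the paper: decompose $Y \equiv Y_1 \oplus Y_2$ via Fact~\ref{fct:restriction}, observe $Y_1$ is finite-or-dark and $Y_2$ is finite-or-$\Id$, and case-analyze (you are a bit more explicit, spelling out all four cases where the paper collapses the two ``$Y_2$ finite'' subcases into one sentence). One small nitpick: your parenthetical appeal to Fact~\ref{fact:trivial}(2) for ``$Y_2 \leq \Id$ and infinite implies $Y_2 \equiv \Id$'' is off, since that fact only applies when the reduction omits finitely many classes; the clean justification is simply that $Y_2 \leq \Id$ makes $Y_2$ decidable, and an infinite decidable ceer admits a computable transversal, hence $\Id \leq Y_2$ and so $Y_2 \equiv \Id$.
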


\begin{proof}
Assume $X, Y$ as in the statement of the lemma. Since $Y\leq X\oplus \Id$, by
Fact~\ref{fct:restriction} we have $Y\equiv Z \oplus Y_0$ where $Z\leq X$
(and thus $Z$ is either finite or dark) and $Y_0\leq \Id$. If $Y_0$ has only
finitely many classes, then $Y\equiv Z\oplus \Id_k$, but then $Z$ is dark and
so is its uniform join with a finite ceer, which is impossible since $Y$ is
light. It follows that $Y_{0} \equiv \Id$ and thus if $Z$ is finite then $Y
\equiv \Id$, otherwise $Y\equiv Z \oplus \Id$ for some dark $Z$.
\end{proof}

Thus, the ceers of the form $X\oplus \Id$ with $X$ dark form an initial
segment in the light ceers $>\Id$.

\begin{theorem}\label{thm:isomorphism-initial-segments}
Let $C$ be dark and let $\boldsymbol{c}$ denote its $\I$-degree. Then
$\Dark_{/\I}(\leq_\I \boldsymbol{c})\simeq (\bId, \iota (\boldsymbol{c}))_\I$.
\end{theorem}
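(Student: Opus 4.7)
The plan is to show that the map $\iota$ from Lemma~\ref{lem:i-embed}, when restricted to $\Dark_{/\I}(\leq_\I \boldsymbol{c})$, is a bijection onto the target interval. Since Lemma~\ref{lem:i-embed} already tells us that $\iota$ is an order embedding on $\Dark_{/\I}$, only two things remain: verifying that $\iota$ lands in the interval, and establishing surjectivity.

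For the first point, I would fix a dark $\I$-degree $\boldsymbol{x} \leq_\I \boldsymbol{c}$ with representative $X$, so $\iota(\boldsymbol{x})$ is the $\I$-degree of $X \oplus \Id$. This is plainly a light ceer, and $\iota(\boldsymbol{x}) \leq_\I \iota(\boldsymbol{c})$ by the embedding property. Moreover $\iota(\boldsymbol{x}) >_\I \bId$: otherwise $X \oplus \Id \leq_\I \Id$, which forces $X \leq_\I \Id$ and hence makes $X$ finite, contradicting darkness. (Note: the interval on the right-hand side of the statement must be read so as to include $\iota(\boldsymbol{c})$, since $\boldsymbol{c}$ itself lies in the initial segment on the left.)

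The main content is surjectivity. Take $\boldsymbol{y}$ with $\bId <_\I \boldsymbol{y}$ and $\boldsymbol{y} \leq_\I \iota(\boldsymbol{c})$, represented by a light ceer $Y$. From $Y \leq_\I C \oplus \Id$ we get $Y \leq C \oplus \Id \oplus \Id_k$ for some $k$, and since $\Id \oplus \Id_k \equiv \Id$ (a routine check: both have infinitely many computable classes), we have $Y \leq C \oplus \Id$. Now Lemma~\ref{lem:i-onto} applies: either $Y \equiv \Id$, which is impossible because $\boldsymbol{y} >_\I \bId$, or $Y \equiv Z \oplus \Id$ for some dark $Z \leq C$. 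In the latter case $\boldsymbol{y} = \iota(\deg_\I(Z))$ with $\deg_\I(Z) \leq_\I \boldsymbol{c}$, as required.

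Combining injectivity and order-preservation from Lemma~\ref{lem:i-embed} with the surjectivity established above yields the isomorphism. The heavy lifting has already been done in Lemmas~\ref{lem:i-embed} and~\ref{lem:i-onto}; the only potential snag is bookkeeping around $\leq_\I$ versus $\leq$, handled by the equivalence $\Id \oplus \Id_k \equiv \Id$ used above.
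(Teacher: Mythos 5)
Your proof is correct and follows the same route as the paper: Lemma~\ref{lem:i-embed} supplies the order embedding, and Lemma~\ref{lem:i-onto} supplies surjectivity; the paper states the result as "immediate" from these two lemmas without spelling out the $\Id \oplus \Id_k \equiv \Id$ bookkeeping that you correctly supply. Your observation that the right-hand interval should include $\iota(\boldsymbol{c})$ (i.e., be half-open) to match the left-hand side is also a fair reading of a minor slip in the statement's notation.
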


\begin{proof}
Immediate from Lemma~\ref{lem:i-embed} (which shows that $\iota$ is an
order-theoretic embedding) and Lemma~\ref{lem:i-onto} which gives onto-ness.
\end{proof}

\begin{defn}
For any $\Light_{/\I}$ degree $\boldsymbol{c}$, we associate a graph
$H_{\boldsymbol{c}}$ as follows:
\begin{itemize}
\item (vertices) the \emph{vertices} are the light minimal degrees $\leq_\I
    \boldsymbol{c}$;
\item (edges) we place an \emph{edge} between vertices $\boldsymbol{d}$ and
    $\boldsymbol{e}$ if and only if there are two incomparable $\I$-degrees
    $\boldsymbol{a}, \boldsymbol{b}\leq_\I \boldsymbol{c}$ which are a light
    \ismc of the pair $\boldsymbol{d}$ and $\boldsymbol{e}$, i.e.
    $\boldsymbol{d}, \boldsymbol{e}\leq_\I \boldsymbol{a}, \boldsymbol{b}$
    and the only light $\I$-degrees $<_\I \boldsymbol{a}$ or $<_\I
    \boldsymbol{b}$ are $\boldsymbol{d}, \boldsymbol{e}$ and $\bId$.
\end{itemize}
\end{defn}

\begin{lemma}\label{lem:from-dark-min-to-light-min}
For any dark ceer $\boldsymbol{c}$, $H_{\iota(\boldsymbol{c})}\simeq
G_{\boldsymbol{c}}$.
\end{lemma}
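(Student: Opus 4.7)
The plan is to exploit the isomorphism $\iota$ of Theorem~\ref{thm:isomorphism-initial-segments} between $\Dark_{/\I}(\leq_{\I} \boldsymbol{c})$ and the set of light degrees in $(\bId, \iota(\boldsymbol{c})]_{\I}$, and show that this isomorphism sends the vertex set and edge relation of $G_{\boldsymbol{c}}$ to those of $H_{\iota(\boldsymbol{c})}$.

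First I would identify the vertices. A dark minimal $\I$-degree is precisely a minimal element of $\Dark_{/\I}$; under $\iota$, minimal elements of $\Dark_{/\I}(\leq_{\I}\boldsymbol{c})$ map to the elements of $(\bId,\iota(\boldsymbol{c})]_{\I} \cap \Light_{/\I}$ that are minimal above $\bId$, i.e., exactly the light minimal degrees $\leq_{\I}\iota(\boldsymbol{c})$. Thus $\iota$ restricts to a bijection between the vertex sets of $G_{\boldsymbol{c}}$ and $H_{\iota(\boldsymbol{c})}$.

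Next I would verify the edge relation is preserved. The key point is: for dark $\boldsymbol{a} \leq_{\I}\boldsymbol{c}$, by Lemma~\ref{lem:i-onto} the light $\I$-degrees $\leq_{\I}\iota(\boldsymbol{a})$ are precisely $\bId$ together with $\{\iota(\boldsymbol{z}) : \boldsymbol{z} \in \Dark_{/\I},\ \boldsymbol{z}\leq_{\I}\boldsymbol{a}\}$. Using this, I would show that $\boldsymbol{a}$ is an $\I$-strongly minimal cover of minimal dark degrees $\boldsymbol{d},\boldsymbol{e}$ in $\Ceers_{/\I}$ if and only if $\iota(\boldsymbol{a})$ is a light $\I$-strongly minimal cover of $\iota(\boldsymbol{d}),\iota(\boldsymbol{e})$. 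The forward direction is direct from the characterization just given (any light degree $<_{\I}\iota(\boldsymbol{a})$ pulls back to a dark degree $<_{\I}\boldsymbol{a}$, which must be $\boldsymbol{d}$ or $\boldsymbol{e}$ by minimality, or else is $\bId$). For the converse, I would use Fact~\ref{fct:dark-closure}(1): since $\iota(\boldsymbol{d})$ and $\iota(\boldsymbol{e})$ are light and non-finite, no dark degree can lie above them, so any $\I$-degree $\boldsymbol{a}' \leq_{\I}\iota(\boldsymbol{c})$ which is a light $\ismc$ of $\iota(\boldsymbol{d}),\iota(\boldsymbol{e})$ is forced to be light and $>_{\I}\bId$, hence equal to $\iota(\boldsymbol{a})$ for a unique dark $\boldsymbol{a}\leq_{\I}\boldsymbol{c}$, and similarly for dark degrees $\boldsymbol{x}<_{\I}\boldsymbol{a}$ one checks that $\iota(\boldsymbol{x})$ is forced to land in $\{\bId,\iota(\boldsymbol{d}),\iota(\boldsymbol{e})\}$, recovering the $\Ceers_{/\I}$ version.

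Since $\iota$ preserves incomparability (it is an order-embedding), the existence of two incomparable (light) $\ismc$s below $\iota(\boldsymbol{c})$ is equivalent under $\iota$ to the existence of two incomparable $\ismc$s below $\boldsymbol{c}$, establishing the isomorphism of graphs. I do not anticipate any real obstacle; the main point to be careful about is the asymmetry in the definitions (that $H$'s $\ismc$ condition quantifies only over \emph{light} degrees below, while $G$'s quantifies over \emph{all} $\I$-degrees below), and this is exactly what Lemma~\ref{lem:i-onto} handles by pinning down the light initial segment of $\iota(\boldsymbol{a})$.
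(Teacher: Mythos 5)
Your proposal is correct and takes essentially the same route as the paper: both invoke Theorem~\ref{thm:isomorphism-initial-segments} (the initial-segment isomorphism induced by $\iota$) to transport $\I$-minimality and the $\I$-strongly-minimal-cover structure from $\Dark_{/\I}(\leq_\I\boldsymbol{c})$ to the light degrees in $(\bId,\iota(\boldsymbol{c})]_\I$. Your care about the asymmetry between the two smc definitions (all $\I$-degrees below in the dark case, only light ones in the definition for $H$) is the right point to attend to; it is handled, as you say, by the fact that the only light degrees below $\iota(\boldsymbol{a})$ are $\bId$ together with the $\iota$-images of the darks below $\boldsymbol{a}$ (Lemma~\ref{lem:i-onto}), and the paper leaves this implicit.
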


\begin{proof}
This is immediate from Theorem~\ref{thm:isomorphism-initial-segments}: For a dark $\I$-degree $\boldsymbol{x}$, $\boldsymbol{x}$ is $\I$-minimal if and only if  $\iota(\boldsymbol{x})$ is light-minimal. Further, for dark degrees $\boldsymbol{x},\boldsymbol{y}, \boldsymbol{z}$, $\boldsymbol{z}$ is an \ismc of $\boldsymbol{x},\boldsymbol{y}$ if and only if  $\iota(\boldsymbol{z})$ is a light \ismc of $\iota(\boldsymbol{x}),\iota(\boldsymbol{y})$.
\end{proof}

\begin{corollary}
For any computable graph $G$, there is a light $\I$-degree $\boldsymbol{c}$
so that $H_{\boldsymbol{c}}$ is isomorphic to the disjoint union of $G$ with a graph which has no edges.
\end{corollary}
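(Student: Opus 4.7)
The plan is to reduce this immediately to the dark case that has already been established. Given a computable graph $G$, apply Theorem~\ref{codesExist} to obtain a dark $\I$-degree $\boldsymbol{c}$ such that $G_{\boldsymbol{c}}$ is isomorphic to the disjoint union of $G$ with a graph having no edges. Then I would take the light $\I$-degree $\iota(\boldsymbol{c})$, where $\iota$ is the embedding $X \mapsto X \oplus \Id$ of $\Dark_{/\I}$ into $\Light_{/\I}$ from Lemma~\ref{lem:i-embed}.

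By Lemma~\ref{lem:from-dark-min-to-light-min}, we have $H_{\iota(\boldsymbol{c})} \simeq G_{\boldsymbol{c}}$ as graphs. Composing with the isomorphism from Theorem~\ref{codesExist} gives that $H_{\iota(\boldsymbol{c})}$ is isomorphic to the disjoint union of $G$ with a graph which has no edges, as required. There is no real obstacle here: Theorem~\ref{thm:isomorphism-initial-segments} has already transferred the entire structure of $\Dark_{/\I}(\leq_\I \boldsymbol{c})$ to the interval $(\bId, \iota(\boldsymbol{c}))_\I$ in $\Light_{/\I}$, matching dark-minimal degrees to light-minimal degrees and $\I$-strongly minimal covers to light $\I$-strongly minimal covers, which is exactly what is needed to identify the vertex and edge relations of $H_{\iota(\boldsymbol{c})}$ with those of $G_{\boldsymbol{c}}$.
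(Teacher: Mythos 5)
Your proof is correct and follows exactly the paper's approach: apply Theorem~\ref{codesExist} to get a suitable dark code $\boldsymbol{c}$, then push it into $\Light_{/\I}$ via $\iota$ and use Lemma~\ref{lem:from-dark-min-to-light-min} to conclude $H_{\iota(\boldsymbol{c})} \simeq G_{\boldsymbol{c}}$. The paper's own proof is simply the one-line citation of these two results, so you have just made the same argument explicit.
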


\begin{proof}
By Theorem~\ref{codesExist} and Lemma~\ref{lem:from-dark-min-to-light-min}.
\end{proof}

\section{Interpreting $(\mathbb{N},+,\times)$ in the partial order
$\Light_{/\I}$ without parameters}\label{LightWithoutParameters}

Following what we did for dark degrees in Definition~\ref{def:good-code} (and
using the notations therein exploited), we define a light $\I$-degree
$\boldsymbol{c}$ to be \emph{good} if in $H_{\boldsymbol{c}}$, the triple
$(U^{\boldsymbol{c}},\phi^{\boldsymbol{c}}_+,\phi^{{\boldsymbol{c}}}_\times)$
gives a model of Robinson's $Q$. Recall that $\iota$ is an initial embedding of $\Dark/\I$ into $\Light/\I\smallsetminus\{\boldsymbol{\Id}\}$. So, we have a good code $\boldsymbol{c}$ so that $H_{\boldsymbol{c}}$ codes $(\mathbb{N},+,\cdot)$. Our goal is to define $\mathcal{N}$ as
in Section~\ref{sct:dark-arithmetic}, but we have to use a different coding
for finite functions. The reason is that we may have good codes which are
$\Light_{/\I}$-degrees which are not in the image of $\iota$ (the embedding introduced in Lemma~\ref{lem:i-embed}),
thus we cannot
use the $\iota$-image of the construction for names in $\Dark_{/\I}$.

Throughout the section an \emph{$\I$-strongly minimal} cover of an $\I$
degree $\boldsymbol{x}$ means an $\I$ degree $\boldsymbol{y} >_{\I}
\boldsymbol{x}$ such that the interval $[\bId, \boldsymbol{y})_{\I}$ is
exactly the interval $[\bId,\boldsymbol{x}]_{\I}$. Since we will consider
only $\I$-strongly minimal covers of light $\I$-degrees, they will be light
as well.

\begin{defn}
Let $F=\{\{\boldsymbol{a}_i,\boldsymbol{b}_i\}\mid i\in S\}$ be a set of unordered
pairs of light minimal $\I$-degrees so that the  $\boldsymbol{a}_i$'s and the
$\boldsymbol{b}_j$'s are distinct (including $\boldsymbol{a}_i\neq
\boldsymbol{b}_j$ for any $i,j \in S$). We say that a light $\I$-degree
$\boldsymbol{f}$ is a \emph{name for $F$} if the only light minimal
$\I$-degrees below $\boldsymbol{f}$ are $\{\boldsymbol{a}_i,
\boldsymbol{b}_i\mid i\in S\}$, and the $\{ \boldsymbol{a}_i,
\boldsymbol{b}_i \}$'s are the only pairs $\{ \boldsymbol{c},\boldsymbol{d}
\}$ of light minimal $\I$-degrees less than $\boldsymbol{f}$ for which there
is an $\boldsymbol{x}<_\I \boldsymbol{f}$ so the only light minimal
$\I$-degrees less than $\boldsymbol{x}$ are $\boldsymbol{c}$ and
$\boldsymbol{d}$, and $\boldsymbol{x}$ has a light \ismc $\boldsymbol{y}$, which
in turn has a light \ismc $\boldsymbol{z}$ which is $\leq_\I \boldsymbol{f}$.
\end{defn}

\begin{lemma}\label{namesExistLight}
Let $F=\{\{ \boldsymbol{a}_i, \boldsymbol{b}_i \}\mid i<n\}$ be a finite set
of pairs of light minimal $\I$-degrees, so that the $\boldsymbol{a}_i$'s and
the $\boldsymbol{b}_j$'s are distinct. Then there is a name for $F$.
\end{lemma}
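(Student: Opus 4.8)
The plan is to mimic the construction of Lemma~\ref{namesExist} in the dark setting, but working directly in $\Light_{/\I}$ rather than pulling back through $\iota$, since the difficulty flagged in the section preamble is that good codes need not lie in the image of $\iota$. First I would recall, using Lemma~\ref{lem:i-embed} and Theorem~\ref{thm:isomorphism-initial-segments}, that for a dark ceer $X$ the $\I$-degree $\iota(X) = \deg_\I(X\oplus\Id)$ is light minimal exactly when $X$ is dark minimal, and that light \ismc s of such degrees correspond to \ismc s in $\Dark_{/\I}$. So given the finite set $F = \{\{\boldsymbol a_i,\boldsymbol b_i\}\mid i<n\}$ of light minimal $\I$-degrees, I would first argue that each $\boldsymbol a_i, \boldsymbol b_i$ is of the form $\iota(\boldsymbol r)$ for a dark minimal $\I$-degree $\boldsymbol r$: indeed a light minimal $\I$-degree $\boldsymbol e$ satisfies $\bId <_\I \boldsymbol e$, so some $E$ in $\boldsymbol e$ has $\Id \leq E$, i.e. $E\equiv Z\oplus \Id$ by Lemma~\ref{lem:i-onto} applied with a suitable dark $X$ below; minimality of $\boldsymbol e$ over $\bId$ forces $Z$ to be dark minimal (or the degree would not be minimal over $\bId$). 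Fix such dark minimal ceers $A_i\in\iota^{-1}(\boldsymbol a_i)$, $B_i\in\iota^{-1}(\boldsymbol b_i)$.

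Next I would build the name explicitly. For each pair $(A_i,B_i)$ choose (using the uniform sequence of pairwise $\I$-incomparable dark minimal ceers as in the proof of Theorem~\ref{codesExist}) fresh dark minimal ceers not among the $A_j,B_j$ and not $\I$-comparable to them, to serve as the intermediate vertices of a ``graph-label''-style gadget; here, because the definition of name for $F$ in $\Light_{/\I}$ asks for an $\boldsymbol x$ whose only light minimal $\I$-degrees below are $\boldsymbol c,\boldsymbol d$, having a light \ismc $\boldsymbol y$, having in turn a light \ismc $\boldsymbol z\leq_\I \boldsymbol f$, I need a three-tier tower over each pair. Concretely, for the pair $(A,B)$ I would take $X_{(A,B)}$ to be the uniform join of $A$, $B$, a fresh dark minimal $P$, a fresh dark minimal $Q$, together with $(A\oplus B)_{/(0,1)}$ (this is the $\boldsymbol x$), then $((A\oplus B)_{/(0,1)} \oplus P)_{/(0,1)}$ (the \ismc $\boldsymbol y$), then $(\,((A\oplus B)_{/(0,1)}\oplus P)_{/(0,1)} \oplus Q)_{/(0,1)}$ (the \ismc $\boldsymbol z$), adjusting the quotiented pairs so they remain between an even and an odd index and so Lemma~\ref{lem:isc-incomparable} applies — all the equivalence classes involved are non-computable since the constituents are dark minimal, by Observation~\ref{inseparabilityDarkMinimal}. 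Then set $f = \Id \oplus \bigoplus_{i<n} X_{(A_i,B_i)}$, so that $f$ is light (it has $\Id$ as a summand) and define $\boldsymbol f = \deg_\I(f)$. Finally $\iota$ being an initial embedding lets me transfer the whole analysis: the light minimal $\I$-degrees $\leq_\I\boldsymbol f$ are exactly the $\iota$-images of the dark minimal $\I$-degrees $\leq_\I \deg_\I(\bigoplus_i X_{(A_i,B_i)})$, and those were computed (as in Lemma~\ref{namesExist}) to be precisely the constituents.

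The verification then splits, exactly as in Lemma~\ref{namesExist}, into checking \emph{no unwanted vertices} and \emph{no unwanted edges/towers}. For the vertices: if a light minimal $\boldsymbol e \leq_\I \boldsymbol f$, then by Lemma~\ref{lem:i-onto} it is $\iota$ of a dark minimal degree $\leq_\I \bigoplus_i X_{(A_i,B_i)}$, and by Fact~\ref{fct:restrictionI} together with Lemma~\ref{secondISMC} (each summand of each $X_{(A,B)}$ is either dark minimal or a one- or two-step quotient of a join of dark minimal ceers) this must be $\I$-equivalent to one of the chosen dark minimal constituents; minimality over $\bId$ pins it to one of the $A_i,B_i,P_i,Q_i$ — and the intermediate $P_i,Q_i$ are not light minimal once one checks they fail to be minimal over $\bId$ in $\boldsymbol f$ is wrong; rather they ARE light minimal but that is fine, the definition of name only constrains which \emph{pairs} admit the three-tier configuration. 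For the towers: one shows, exactly by the argument in the last paragraph of the proof of Lemma~\ref{namesExist} (tracking the non-computable class of $0$ through a reduction into $f\oplus\Id_n$ and invoking Remark~\ref{rmk:co-finitelyMany}), that the only pairs of light minimal $\I$-degrees $\leq_\I\boldsymbol f$ carrying the configuration ``$\boldsymbol x$ with a light \ismc $\boldsymbol y$ with a light \ismc $\boldsymbol z\leq_\I\boldsymbol f$'' are the intended $\{\boldsymbol a_i,\boldsymbol b_i\}$, because any spurious such $\boldsymbol x$ would force $R_1\oplus R_2 \leq_\I$ a single summand-column of $f$, contradicting that no column is above two distinct dark minimal constituents unless we put a quotient there.

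I expect the main obstacle to be the third tier: unlike Lemma~\ref{namesExist}, where a single \ismc over a pair sufficed, here the definition of ``name'' in $\Light_{/\I}$ demands a \emph{light \ismc of a light \ismc}, and I must verify both that the iterated quotient construction $((\,\cdot\,)_{/(0,1)}\oplus P)_{/(0,1)}$ genuinely produces an \ismc at each stage (the earlier lemmas \ref{oplusISMC}, \ref{secondISMC}, \ref{lem:isc-incomparable} are stated for pairs of dark \emph{minimal} ceers, whereas $(A\oplus B)_{/(0,1)}$ is not minimal), and that no shorter or alternative tower accidentally appears. Handling the non-minimal intermediate ceers will require either proving a mild generalization of Lemmas~\ref{secondISMC} and~\ref{lem:isc-incomparable} allowing one argument to be a quotiented join (keeping track of which classes are non-computable, which they all are here), or restructuring the gadget so that every quotient is taken between genuinely dark minimal ceers and the tower is assembled by $\oplus$ alone. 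Either way the bookkeeping is routine but delicate, and that is where the real work of the proof lies.
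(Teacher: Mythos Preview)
Your proposal contains two genuine gaps, both stemming from the very obstacle the section preamble warns about.

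First, your reduction to the dark case fails at its first step. You claim that every light minimal $\I$-degree $\boldsymbol{e}$ is of the form $\iota(\boldsymbol{r})$ for a dark minimal $\boldsymbol{r}$, invoking Lemma~\ref{lem:i-onto}. But that lemma has the hypothesis $Y\leq X\oplus\Id$ for some \emph{dark} $X$; a general light minimal degree need not satisfy this. Indeed, the paper's own proof explicitly allows for the possibility that there is \emph{no} dark degree below $\boldsymbol{a}_i$ (writing ``if there is no dark degree below $\boldsymbol{a}_i$ then we let $\boldsymbol{r}_i=\boldsymbol{a}_i$''), and the section introduction says outright that good codes may lie outside the image of $\iota$. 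So the pullback $A_i=\iota^{-1}(\boldsymbol{a}_i)$ you rely on simply need not exist, and the entire construction built on the $A_i,B_i$ collapses.

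Second, even granting the pullback, your gadget violates the definition of \emph{name}. That definition requires that the \emph{only} light minimal $\I$-degrees below $\boldsymbol{f}$ are the $\boldsymbol{a}_i,\boldsymbol{b}_i$; it is not merely a constraint on which pairs admit towers. Your fresh dark minimal $P_i,Q_i$ produce additional light minimal degrees $\iota(P_i),\iota(Q_i)\leq_\I\boldsymbol{f}$, so $\boldsymbol{f}$ is disqualified as a name for $F$ regardless of the tower analysis.

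The paper's route avoids both problems by never leaving $\Light_{/\I}$: for each pair it takes $\boldsymbol{c}_i$ a self-full $\I$-strongly minimal cover of $\boldsymbol{a}_i\oplus\boldsymbol{b}_i$ (in $\Ceers_{/\I}$, using that every non-universal degree has infinitely many such covers), then $\boldsymbol{d}_i$ a self-full $\I$-strongly minimal cover of $\boldsymbol{c}_i$, and sets $\boldsymbol{f}=\bigoplus_i\boldsymbol{d}_i$. The tower $\boldsymbol{a}_i\oplus\boldsymbol{b}_i<_\I\boldsymbol{c}_i<_\I\boldsymbol{d}_i$ is built in by fiat, introduces no new light minimal degrees (strong minimal covers add nothing below), and works for arbitrary light minimal $\boldsymbol{a}_i,\boldsymbol{b}_i$ without any dark preimage. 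The verification that no spurious tower occurs then rests on a finite classification of the degrees $\leq_\I\boldsymbol{d}_i$, which is possible precisely because the $\boldsymbol{c}_i,\boldsymbol{d}_i$ were chosen as strong minimal covers.
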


\begin{proof}
In this proof we use that in $\Ceers_{/\I}$ every non-universal element has
infinitely many distinct self-full strong minimal covers, see
\cite[Theorem~7.9]{Andrews-Sorbi}. For each pair $\{\boldsymbol{a}_i,
\boldsymbol{b}_i \} \in F$, we let $\boldsymbol{c}_i$ be a self-full
$\I$-strongly minimal cover of $\boldsymbol{a}_i\oplus \boldsymbol{b}_i$. Let
$\boldsymbol{d}_i$ be a self-full $\I$-strongly minimal cover of
$\boldsymbol{c}_i$. Note that we choose these to be $\I$-strongly minimal
covers in $\Ceers_{/\I}$, not just in $\Light_{/\I}$. Let
$\boldsymbol{f}=\bigoplus_i \boldsymbol{d}_i$.

First we check that for each
pair $\{ \boldsymbol{a}_i,\boldsymbol{b}_i \}$ in $F$, there is an
$\boldsymbol{x}_i<_\I \boldsymbol{f}$ (take $\boldsymbol{x}_i=
\boldsymbol{a}_i \oplus \boldsymbol{b}_i$) so that the only light minimal
$\I$-degrees less than $\boldsymbol{x}_i$ are $\boldsymbol{a}_i$ and
$\boldsymbol{b}_i$, and $\boldsymbol{x}_i$ has a light \ismc $\boldsymbol{y}_i$
(take $\boldsymbol{y}_i=\boldsymbol{c}_i$) which has a light \ismc
$\boldsymbol{z}_i \leq_\I \boldsymbol{f}$ (take
$\boldsymbol{z}_i=\boldsymbol{d_i}$). To see that the only light minimal
$\I$-degrees $\leq_\I \boldsymbol{x}_i$ are $\boldsymbol{a}_i$ and
$\boldsymbol{b}_i$, assume that $A_i \in \boldsymbol{a}_i$, $B_i \in
\boldsymbol{b}_i$, and $X_i=A_i \oplus B_i$. If $U \leq_\I X_i$ has light
minimal $\I$-degree then by Fact~\ref{fct:restrictionI} there exist $U_0,
U_1$ such that $U \equiv_\I U_0 \oplus U_1$ with $U_0 \leq A_i$ and $U_1 \leq
B_i$. By light minimality of $A_i, B_i$ it follows
that
$U_0$ is
either finite, dark, $U_0 \equiv_{\I} A_i$, or $U_0 \equiv_\I \Id$, and similarly
$U_1$ is either finite, dark, $U_1 \equiv_\I B_i$, or $U_1 \equiv_\I \Id$. If
$U_0\equiv_\I A_i$, then we see that $A_i\leq_{\I} U$, so by light minimality
of $U$ and $A_i$, we have that $U\equiv_{\I}A_i$. Similarly if $U_1\equiv_\I
B_i$ then $U\equiv_{\I} B_i$. Thus we can assume neither of these cases
holds. By lightness of $U$, it follows at least one of $U_0$ or $U_1$ is
light, so without loss of generality, we suppose $U_0$ is light, i.e.
$U_0\equiv_{\I} \Id$. If $U_1$ is finite or $\equiv_{\I} \Id$, then
$U\equiv_{\I}\Id$, contradicting $U$ being of light minimal $\I$-degree. Thus
$U_1$ must be dark $\leq B_i$. So, $U\equiv_\I \Id\oplus D$ for some dark
$D\leq B_i$. But since every dark ceer $D$ has a join with $\Id$, namely
$D\oplus \Id$ \cite[Obs 5.1]{Andrews-Sorbi}, it follows that $U\equiv_\I
D\oplus \Id\leq B_i$. Again, by light minimality of $U$ and $B_i$, we see
$U\equiv_{\I} B_i$.

It remains to show that no other pair $\leq_\I \boldsymbol{f}$ has such a
triple $\boldsymbol{x},\boldsymbol{y}, \boldsymbol{z}$. We begin with an easy
observation about the $\I$-degrees $\leq \boldsymbol{d}_i$. Together with
$A_i \in \boldsymbol{a}_i$, $B_i \in \boldsymbol{b}_i$, fix also
representatives $C_i \in \boldsymbol{c}_i$, $D_i \in \boldsymbol{d}_i$, and
$f \in \boldsymbol{f}$. We also use the notation $\boldsymbol{0}_\I$ to
denote the least $\I$-degree, which is the $\equiv_\I$-class of the ceer
$\Id_1$ with only one equivalence class, and is comprised exactly of all
finite ceers.

Next we show that for each $\boldsymbol{a}_i$ there can be
at most one dark $\I$ degree $\boldsymbol{r}_i \leq_{\I}\boldsymbol{a}_i$. To
see this assume towards a contradiction that $\boldsymbol{r}, \boldsymbol{s}$ are distinct dark $\I$-degrees
below $\boldsymbol{a}_i$. By \cite[Obs 5.1 and Lemma 6.5]{Andrews-Sorbi}, $\boldsymbol{r} \oplus \bId$ and
$\boldsymbol{s} \oplus \bId$ are joins of the two $\I$-degrees, and thus they
are both $\leq_{\I} \boldsymbol{a}_i$. Since $\iota$ is an embedding, $\boldsymbol{r} \oplus \bId$ and $\boldsymbol{s} \oplus
\bId$ are distinct light degrees in $(\bId,\boldsymbol{a}_i]$. This contradicts the light minimality of $\boldsymbol{a}_i$. We conclude that there can be only one dark $\I$-degree $\leq_{\I}\boldsymbol{a}_i$. Further, if there is a
(unique) dark $\I$-degree $\boldsymbol{r}_i \leq_{\I} \boldsymbol{a}_i$ then
$\boldsymbol{a}_i = \boldsymbol{r}_i \oplus \bId$. In the following if there
is no dark degree below $\boldsymbol{a}_i$ then we let
$\boldsymbol{r}_i=\boldsymbol{a}_i$. Similar considerations and notations
hold for each $\boldsymbol{b}_i$. In particular, if there is a (unique) dark
degree $\I$-degree $\leq \boldsymbol{b}_i$, we call it $\boldsymbol{s}_i$,
and if there is no dark $\boldsymbol{s}_i\leq_{\I}\boldsymbol{b}_i$, then we
let $\boldsymbol{s}_i=\boldsymbol{b}_i$.

\begin{claim}
If $\boldsymbol{w}\leq_\I \boldsymbol{d}_i$ is an $\I$-degree, then
$\boldsymbol{w} \in\{\boldsymbol{0}_\I, \bId, \boldsymbol{a}_i,
\boldsymbol{b}_i, \boldsymbol{r}_i, \boldsymbol{s}_i, \boldsymbol{r}_i\oplus
\boldsymbol{b}_i, \boldsymbol{a}_i\oplus \boldsymbol{s}_i,
\boldsymbol{a}_i\oplus \bId, \boldsymbol{b}_i\oplus \bId,
\boldsymbol{a}_i\oplus \boldsymbol{b}_i, \boldsymbol{c}_i,
\boldsymbol{d}_i\}$.
\end{claim}

\begin{proof}
If $W\leq_\I D_i$, then $W\equiv_\I D_i$ or $W\equiv_\I C_i$ or $W\leq_{\I}
A_i\oplus B_i$ since $\boldsymbol{d}_i$ is an \ismc of $\boldsymbol{c}_i$
which is an \ismc of $\boldsymbol{a}_i\oplus \boldsymbol{b}_i$. If
$W\leq_{\I} A_i\oplus B_i$, then by Fact~\ref{fct:restrictionI} $W\equiv_\I
W_0\oplus W_1$ where $W_0\leq A_i$ and $W_1\leq B_i$. Since each
$\boldsymbol{a}_i$ and $\boldsymbol{b}_i$ is light minimal, the only
possibilities for the $\I$-degree of $W_0$ are $\boldsymbol{a}_i$,
$\boldsymbol{r}_i$, $\bId$, or $\boldsymbol{0}_\I$, and the only
possibilities for the $\I$-degree of $W_1$ are $\boldsymbol{b}_i$,
$\boldsymbol{s}_i$, $\bId$, or $\boldsymbol{0}_{\I}$. The possible direct
sums of these are easily seen to be the possibilities listed in the claim.
\end{proof}

Suppose that the only light minimal $\I$-degrees bounded by $\boldsymbol{x}$ are
$\boldsymbol{a}_i$ and $\boldsymbol{a}_j$, with $i\ne j$ (it is no different
if we consider $\boldsymbol{a}_i$ and $\boldsymbol{b}_j$ or
$\boldsymbol{b}_i$ and $\boldsymbol{b}_j$). Since $X\leq_\I
f\equiv_\I\bigoplus D_r$, we see by Fact~\ref{fct:restrictionI} that
$X\equiv_\I \bigoplus_r X_r$ where each $X_r\leq D_r$. Thus each $X_r$ is
$\I$-equivalent to one of the $\I$-degrees in the list: $\boldsymbol{0}_\I,
\bId, \boldsymbol{a}_r, \boldsymbol{b}_r, \boldsymbol{r}_r, \boldsymbol{s}_r,
\boldsymbol{r}_r\oplus \boldsymbol{b}_r, \boldsymbol{a}_r \oplus
\boldsymbol{s}_r, \boldsymbol{a}_i\oplus \bId, \boldsymbol{b}_i\oplus \bId,
\boldsymbol{a}_r\oplus \boldsymbol{b}_r, \boldsymbol{c}_r, \boldsymbol{d}_r$.
Since $\boldsymbol{x}$ only bounds $\boldsymbol{a}_i$ and $\boldsymbol{a}_j$,
for the $\I$-degrees of the various $X_r$ we must rule out the possibilities
that any such degree  bounds in the above list a light minimal $\I$-degree
not in $\{\boldsymbol{a}_i, \boldsymbol{a}_j\}$; finally we can remove from
the list the $\I$-degrees $\boldsymbol{a}_i \oplus \boldsymbol{s}_i$, and
$\boldsymbol{a}_j \oplus \boldsymbol{s}_j$: note for example that
$\boldsymbol{a}_i\oplus \boldsymbol{s}_i\geq_{\I} \bId\oplus \boldsymbol{s}_i
= \boldsymbol{b}_i$, so this one cannot be the $\I$-degree of $X_i$, unless
$\boldsymbol{b}_{i} \in \{\boldsymbol{a}_i, \boldsymbol{a}_j\}$. Then we
conclude that the $\I$-degree of any of the $X_{r}$'s is equal to one of the
$\I$-degrees in the following list: $\boldsymbol{0}_{\I}$, $\bId$,
$\boldsymbol{a}_i$, $\boldsymbol{r}_k$, $\boldsymbol{s}_k$,
$\boldsymbol{a}_i\oplus \bId$, $\boldsymbol{a}_j$, $\boldsymbol{a}_j\oplus
\bId$, for some $k$.

We write $\boldsymbol{a}_i$ as either $\boldsymbol{r}_i$ or
$\boldsymbol{r}_i\oplus \bId$, depending on which is true. Similarly for
$\boldsymbol{a}_j$. It then follows that the only possibility for
$\boldsymbol{x}$ is to be of the form $\boldsymbol{r}_i\oplus
\boldsymbol{r}_j (\oplus \boldsymbol{r}_k)(\oplus \boldsymbol{s}_k)(\oplus
\bId)$ where parentheses are meant to symbolize that we may or may not be
joining with these degrees. But we observe that any degree $\boldsymbol{u}
\geq_\I \boldsymbol{a}_i$ and $\geq_{\I} \boldsymbol{r}_k$ for any $k$, is
also $\geq_{\I} \boldsymbol{a}_k$. This is because $\boldsymbol{u}$ is light, and thus $\geq_{\I}\boldsymbol{a}_k$, since $\boldsymbol{a}_k$ is the join of $\bId$ and $\boldsymbol{r}_k$. Since
$\boldsymbol{x}$ bounds only $\boldsymbol{a}_i$ and $\boldsymbol{a}_j$, this
is impossible if $k \notin\{i,j\}$. Thus, we are left with the only
possibilities for $\boldsymbol{x}$ being $\boldsymbol{r}_i\oplus
\boldsymbol{r}_j(\oplus\bId)$. Now, $\boldsymbol{x}$ bounded
$\boldsymbol{a}_i$ and $\boldsymbol{a}_j$ and no other light minimal
$\I$-degrees, and $\boldsymbol{y}$ was an $\I$-strongly minimal cover of
$\boldsymbol{x}$, then it would also bound no other light minimal degrees,
and $\boldsymbol{z}$ being an $\I$-strongly minimal cover of $\boldsymbol{y}$
would also bound no other light minimal $\I$-degrees. On the other hand, the
same argument as before applied to $\boldsymbol{y}$ and $\boldsymbol{z}$
would show that also $\boldsymbol{y}$ and $\boldsymbol{z}$ may only have the
form $\boldsymbol{r}_i\oplus \boldsymbol{r}_j(\oplus\bId)$. But there are at
most two such $\I$-degrees, contradicting the fact that $\boldsymbol{x},
\boldsymbol{y}, \boldsymbol{z}$ are three distinct $\I$-degrees.

Thus $\boldsymbol{f}$ is a name for $F$.
\end{proof}	

\begin{defn}
Given two good codes $\boldsymbol{c}, \boldsymbol{c}'$ for graphs in
$\Light_{/\I}$ (thus giving models of Robinson's $Q$), we say that a pair of names
$(\boldsymbol{f}, \boldsymbol{g})$ is a \emph{label} for a partial function
$H: H_{\boldsymbol{c}} \longrightarrow H_{\boldsymbol{c}'}$ if
$\boldsymbol{f}$ is a name for a set $F$ and $\boldsymbol{g}$ is a name for a
set $G$ and whenever $\boldsymbol{a}<_\I \boldsymbol{c}$ is light minimal,
then there is a light minimal $\I$-degree $\boldsymbol{b}$ so that $\{
\boldsymbol{a},\boldsymbol{b} \}$ is a pair in $F$ and
$\boldsymbol{b}\not\leq_\I \boldsymbol{c},\boldsymbol{c}'$, and $\{
\boldsymbol{b}, H(\boldsymbol{a})\}$ is a pair in $G$.
\end{defn}

\begin{lemma}
If $H$ is a finite function between $H_{\boldsymbol{c}}$ and
$H_{\boldsymbol{c}'}$, with $\boldsymbol{c}, \boldsymbol{c}'$ light
$\I$-degrees, there exists a pair $(\boldsymbol{f},\boldsymbol{g})$ which is
a label for this function.
\end{lemma}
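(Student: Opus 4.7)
The plan is to produce $\boldsymbol{f}$ and $\boldsymbol{g}$ separately by invoking Lemma~\ref{namesExistLight}, using a common collection of freshly chosen ``link'' degrees $\boldsymbol{b}_{\boldsymbol{a}}$ that tie each $\boldsymbol{a}\in\dom(H)$ in $F$ to $H(\boldsymbol{a})$ in $G$. The construction proceeds in three steps: build the auxiliary light minimal degrees; assemble $F$ and $G$; verify the distinctness hypotheses of Lemma~\ref{namesExistLight}.

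First I would fix representatives $C\in\boldsymbol{c}$ and $C'\in\boldsymbol{c}'$ and produce, by a routine priority construction extending the one cited in the proof of Theorem~\ref{codesExist} (originally Theorem~3.3 of \cite{Andrews-Sorbi}), a finite family of pairwise $\leq_\I$-incomparable dark minimal ceers $\{R_{\boldsymbol{a}}\mid \boldsymbol{a}\in\dom(H)\}$ with each $R_{\boldsymbol{a}}\nleq C$ and $R_{\boldsymbol{a}}\nleq C'$; the two added diagonalization requirements coexist with the standard dark minimal framework in the usual way. Set $\boldsymbol{b}_{\boldsymbol{a}}=\iota(\deg_\I(R_{\boldsymbol{a}}))$. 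By (the proof of) Lemma~\ref{lem:from-dark-min-to-light-min}, $\iota$ sends dark minimal $\I$-degrees to light minimal $\I$-degrees, and by Lemma~\ref{lem:i-embed} it is an order-embedding, so the $\boldsymbol{b}_{\boldsymbol{a}}$'s are pairwise distinct light minimal $\I$-degrees. A short direct computation (using that $\Id\leq_\I C,C'$ by lightness of $C,C'$, together with Observation~\ref{inseparabilityDarkMinimal} applied to the dark minimal $R_{\boldsymbol{a}}$) gives $\boldsymbol{b}_{\boldsymbol{a}}\nleq_\I\boldsymbol{c},\boldsymbol{c}'$.

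Next, define
\[
F=\{\{\boldsymbol{a},\boldsymbol{b}_{\boldsymbol{a}}\}\mid \boldsymbol{a}\in\dom(H)\},\qquad G=\{\{\boldsymbol{b}_{\boldsymbol{a}},H(\boldsymbol{a})\}\mid \boldsymbol{a}\in\dom(H)\},
\]
and check the distinctness hypotheses that Lemma~\ref{namesExistLight} imposes. For $F$ the $\boldsymbol{a}$'s are distinct (as distinct domain elements), the $\boldsymbol{b}_{\boldsymbol{a}}$'s are distinct by construction, and the separation $\boldsymbol{a}'\leq_\I\boldsymbol{c}$ versus $\boldsymbol{b}_{\boldsymbol{a}}\nleq_\I\boldsymbol{c}$ forbids any cross-equality. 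For $G$ the $\boldsymbol{b}_{\boldsymbol{a}}$'s are again distinct, injectivity of $H$ (which is the standing assumption in the applications to come, where the labeled functions are order-preserving bijections between finite intervals) gives distinctness of the $H(\boldsymbol{a})$'s, and $\boldsymbol{b}_{\boldsymbol{a}}\nleq_\I\boldsymbol{c}'$ while $H(\boldsymbol{a}')\leq_\I\boldsymbol{c}'$ separates the two sides. Lemma~\ref{namesExistLight} then delivers names $\boldsymbol{f}$ for $F$ and $\boldsymbol{g}$ for $G$, and the label condition holds by construction with witness $\boldsymbol{b}=\boldsymbol{b}_{\boldsymbol{a}}$ for each $\boldsymbol{a}\in\dom(H)$.

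The main obstacle is really just bookkeeping rather than a genuine difficulty: the existence of the auxiliary family $\{R_{\boldsymbol{a}}\}$ avoiding $C$ and $C'$ is a minor adaptation of a standard priority construction, and the only subtlety elsewhere is that the distinctness clause in the definition of ``name'' forces the range of $H$ to have no repetitions, which is why the lemma (and its later uses) implicitly require $H$ to be injective. Everything else reduces to checking that the chosen $\boldsymbol{b}_{\boldsymbol{a}}$'s lie outside the two downsets generated by $\boldsymbol{c}$ and $\boldsymbol{c}'$, which in turn is immediate from the darkness/minimality of the $R_{\boldsymbol{a}}$'s.
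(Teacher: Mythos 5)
Your proposal is correct and follows essentially the same route as the paper: pick dark minimal ceers avoiding the relevant lower cone (the paper cites the same source, Theorem~3.3 of \cite{Andrews-Sorbi}, and works below $C \oplus C'$ rather than checking $C$ and $C'$ separately, but that is an immaterial difference), push them through $\iota$ to obtain light minimal link degrees not below $\boldsymbol{c}$ or $\boldsymbol{c}'$, and then invoke Lemma~\ref{namesExistLight} to produce names for the two sets of pairs. Your explicit verification of the distinctness hypotheses --- in particular your observation that the definition of ``name'' forces $H$ to be injective, which is tacitly true in the paper's intended applications (order-preserving bijections between intervals) but not flagged --- is a useful gloss on a detail the paper's proof passes over silently.
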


\begin{proof}
We need to use light minimal degrees which are not below
$\boldsymbol{c}\oplus \boldsymbol{c}'$ to interpolate for the function. Such
degrees exist, because there are infinitely many dark minimal degrees
avoiding any lower cone \cite[Theorem~3.3]{Andrews-Sorbi}. By
Lemma~\ref{lem:i-embed} we just use the $\iota$ image of these. If $C,C'$ are
representatives of $\boldsymbol{c}, \boldsymbol{c}'$ respectively, and
$D\nleq C\oplus C'$ is dark minimal, then $\iota(D)\nleq_{\I} C\oplus C'$
because $D\nleq_\I C\oplus C'$, by Observation
\ref{inseparabilityDarkMinimal}

The existence of the names for the needed sets of pairs is then given by
Lemma \ref{namesExistLight}.
\end{proof}

\begin{defn}
On $\I$-degrees we define the equivalence relation
$(\boldsymbol{c},\boldsymbol{d})\sim(\boldsymbol{c}',\boldsymbol{d}')$ if the
two pairs coincide, or $\boldsymbol{c}$ and $\boldsymbol{c}'$ are good codes,
$\boldsymbol{d}\in U^{\boldsymbol{c}}$ and $\boldsymbol{d}'\in
U^{\boldsymbol{c}'}$ and there exists a pair of names
$(\boldsymbol{f},\boldsymbol{g})$ which is a label for a function $H$ which
is an order-preserving bijection between $[\boldsymbol{0}^{\boldsymbol{c}},
\boldsymbol{d}]^{U^{\boldsymbol{c}}}$ and
$[\boldsymbol{0}^{\boldsymbol{c}'},\boldsymbol{d}']^{U^{\boldsymbol{c}'}}$,
where the various symbols $U^{\boldsymbol{c}}$, $U^{\boldsymbol{c}'}$,
$\boldsymbol{0}^{\boldsymbol{c}}$ and $\boldsymbol{0}^{\boldsymbol{c}'}$ have
the same meanings as in Section~\ref{sct:dark-arithmetic}.
\end{defn}

This is what is needed to again define $\mathbb{N}$ exactly as in the dark
case, as explained in Section~\ref{sct:dark-arithmetic}.

Thus we have shown that there is a copy of $(\mathbb{N},+,\times)$ interpretable without parameters
in the structure $\Light_{/\I}$, proving the following theorem:

\begin{thm}
The first order theory of $\Light_{/\I}$ is computably isomorphic to true
first order  arithmetic.
\end{thm}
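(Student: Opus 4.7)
The plan is to mimic the strategy of Section~\ref{sct:dark-arithmetic} essentially verbatim, with the only difference being that the encoding of finite functions now uses the pairs of names $(\boldsymbol{f},\boldsymbol{g})$ (labels) just defined, rather than the simpler single-name encoding available in $\Dark_{/\I}$. Since Lemma~\ref{lem:one-way} already gives $\Th(\Light_{/\I}) \leq_1 \Th^1(\mathbb{N})$, it suffices to produce a parameter-free interpretation of $(\mathbb{N},+,\times)$ in the partial order $\Light_{/\I}$.

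First I would observe that the predicate ``$\boldsymbol{c}$ is a good code'' is $\emptyset$-definable in $\Light_{/\I}$: the graph $H_{\boldsymbol{c}}$ is defined via minimality and existence of light {\ismc}s, which are first-order properties in the partial order, and good-ness is then just that the fixed finite axiomatization of Robinson's $Q$ holds in the structure $(U^{\boldsymbol{c}},\phi_+^{\boldsymbol{c}},\phi_\times^{\boldsymbol{c}})$ interpreted in $H_{\boldsymbol{c}}$. Similarly, the relation $\sim$ is $\emptyset$-definable, since being a name is first-order (it asserts certain triples of {\ismc}s exist and no other light minimal degrees lie below $\boldsymbol{f}$), being a label is first-order, and asserting that the encoded function is an order-preserving bijection on the relevant intervals is first-order.

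Next I would define $\mathcal{N}$ to be the collection of $\sim$-classes of pairs $(\boldsymbol{c},\boldsymbol{d})$ with $\boldsymbol{c}$ a good code, $\boldsymbol{d}\in U^{\boldsymbol{c}}$, and such that for every good code $\boldsymbol{c}'$ there is $\boldsymbol{d}'\in U^{\boldsymbol{c}'}$ with $(\boldsymbol{c},\boldsymbol{d})\sim(\boldsymbol{c}',\boldsymbol{d}')$. The crucial step is to prove the analog of Lemma~\ref{lem:good-crucial}: namely, that a good code $\boldsymbol{c}^\star$ with $(U^{\boldsymbol{c}^\star},\phi_+^{\boldsymbol{c}^\star},\phi_\times^{\boldsymbol{c}^\star})\simeq(\mathbb{N},+,\times)$ exists, and that $\mathcal{N}=\{[(\boldsymbol{c}^\star,\boldsymbol{d})]_\sim \mid \boldsymbol{d}\in U^{\boldsymbol{c}^\star}\}$. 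For existence, take a dark good code as produced in Section~\ref{sct:dark-arithmetic} (using Theorems~\ref{codesExist} and~\ref{thm:folklore}) and push it through $\iota$; then Lemma~\ref{lem:from-dark-min-to-light-min} guarantees $H_{\iota(\boldsymbol{c})}\simeq G_{\boldsymbol{c}}$, so $\iota(\boldsymbol{c})$ is a light good code. For the equality, one direction is immediate from the definition of $\mathcal{N}$; for the other, given any good code $\boldsymbol{c}'$ and any $\boldsymbol{d}'\in U^{\boldsymbol{c}'}$, the interval $[\boldsymbol{0}^{\boldsymbol{c}'},\boldsymbol{d}']^{U^{\boldsymbol{c}'}}$ is finite in the standard-part case, so the order-preserving bijection onto the corresponding interval in $U^{\boldsymbol{c}^\star}$ is a finite partial function, and Lemma~\ref{namesExistLight} furnishes a pair of names labeling it.

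The main content to verify is that the label machinery actually captures every finite order-preserving bijection between standard initial segments: this requires producing fresh light minimal $\I$-degrees $\boldsymbol{b}$ not below $\boldsymbol{c}\oplus\boldsymbol{c}'$ to serve as intermediaries. As noted just after the definition of labels, such degrees exist by applying $\iota$ to dark minimal degrees avoiding the cone below representatives of $\boldsymbol{c}\oplus\boldsymbol{c}'$, which exist by \cite[Theorem~3.3]{Andrews-Sorbi} and Observation~\ref{inseparabilityDarkMinimal}. With this in hand, $\sim$ identifies the standard parts of any two good codes, isolating $\mathcal{N}$ as the $\sim$-classes representing standard naturals. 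Finally I would define $+$ and $\times$ on $\mathcal{N}$ exactly as in the dark case: $[(\boldsymbol{c},\boldsymbol{d})]_\sim + [(\boldsymbol{c}',\boldsymbol{d}')]_\sim = [(\boldsymbol{c}'',\boldsymbol{d}'')]_\sim$ iff there is a good code $\hat{\boldsymbol{c}}$ and elements $\boldsymbol{e},\boldsymbol{e}',\boldsymbol{e}''\in U^{\hat{\boldsymbol{c}}}$ which are $\sim$-related to the three pairs and satisfy $\phi_+^{\hat{\boldsymbol{c}}}(\boldsymbol{e},\boldsymbol{e}')=\boldsymbol{e}''$, and similarly for $\times$. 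Well-definedness, the isomorphism $(\mathcal{N},+,\times)\simeq(\mathbb{N},+,\times)$, and $\emptyset$-definability all follow as before. Combining with Lemma~\ref{lem:one-way} yields the $1$-equivalence with $\Th^1(\mathbb{N})$ and hence, via Myhill, computable isomorphism.
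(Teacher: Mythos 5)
Your proposal is correct and takes essentially the same approach as the paper: you carry over the Section~\ref{sct:dark-arithmetic} machinery wholesale, with the label-based encoding of finite functions replacing the single-name encoding, use $\iota$ together with Lemma~\ref{lem:from-dark-min-to-light-min} to produce a good light code interpreting the standard $(\mathbb{N},+,\times)$, and invoke Lemma~\ref{namesExistLight} and the cone-avoidance supply of light minimal degrees (via $\iota$ and \cite[Theorem~3.3]{Andrews-Sorbi}) to establish the analog of Lemma~\ref{lem:good-crucial}. The paper leaves the reassembly of these pieces implicit (``define $\mathbb{N}$ exactly as in the dark case''); you have simply written it out, and the argument matches.
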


\begin{corollary}
The first order theory of $\Light$ is computably isomorphic to true first
order  arithmetic.
\end{corollary}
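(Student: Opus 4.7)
The plan is to chain together three reductions, exactly as was done for $\Dark$ and $\Ceers$ in the corollary following Theorem~\ref{thm:theory-of-I-dark}. First, the preceding theorem gives $\Th^1(\mathbb{N}) \leq_1 \Th(\Light_{/\I})$, since an interpretation without parameters of $(\mathbb{N},+,\times)$ in $\Light_{/\I}$ has just been established.

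Next I would observe that $\Light_{/\I}$ is $\emptyset$-interpretable in $\Light$: by Lemma~\ref{lem:I-definability}, the equivalence relation $\equiv_\I$ and the induced partial order $\leq_\I$ are $\emptyset$-definable in $\Light$. Therefore any first order sentence in the language of posets about $\Light_{/\I}$ can be effectively translated into a sentence in the language of posets about $\Light$ (by relativising quantifiers to $\equiv_\I$-classes and replacing $\leq$ with its $\I$-version). This yields $\Th(\Light_{/\I}) \leq_1 \Th(\Light)$.

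Finally, Lemma~\ref{lem:one-way} already records the reverse bound $\Th(\Light) \leq_1 \Th^1(\mathbb{N})$. Combining,
\[
\Th^1(\mathbb{N}) \leq_1 \Th(\Light_{/\I}) \leq_1 \Th(\Light) \leq_1 \Th^1(\mathbb{N}),
\]
so $\Th(\Light) \equiv_1 \Th^1(\mathbb{N})$, and by the Myhill Isomorphism Theorem the two are computably isomorphic.

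There is no substantial obstacle: all three links in the chain are already available. The only thing to be careful about is the middle step, where one must spell out (at least in passing) that $\emptyset$-definability of $\equiv_\I$ in $\Light$ suffices to give a parameter-free interpretation of the quotient poset $\Light_{/\I}$, so the 1-reduction on theories is uniform and effective.
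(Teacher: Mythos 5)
Your proposal is correct and matches the paper's argument: the paper's proof is precisely the one-line observation that $\equiv_\I$ is $\emptyset$-definable in $\Light$ by Lemma~\ref{lem:I-definability}, combined (implicitly) with the preceding theorem on $\Light_{/\I}$ and the general upper bound of Lemma~\ref{lem:one-way}. You have simply written out the same chain of reductions in more detail.
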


\begin{proof}
$\I$-equivalence on light $\I$-degrees is $\emptyset$-definable in the light degrees, by
Lemma~\ref{lem:I-definability}.
\end{proof}

\section{Open Questions}

The argument in \cite{Andrews-et-al} showed that the 3-quantifier theory of $\Ceers$ is already undecidable, but does not clarify its degree. We ask:

\begin{question}
	What is the degree of the 3-quantifier theory of $\Ceers$?
\end{question}

\begin{question}
	What is the least $n$ so that the $n$-quantifier theory of $\Ceers$ is undecidable?
\end{question}

Our interpretation of $(\mathbb{N},+,\cdot)$ shows that there is some $k$ (for example any $k$ so that the interpretation is $\exists_k$
in the hierarchy of formulas, as described for instance in \cite[p.47f]{Hodges}) so that for every $m$, the $k+m$-quantifier theory of $\Ceers$ is $\geq_T \emptyset^{(m)}$. We ask:

\begin{question}
	What is the least such $k$? Is it true that the $k+m$-quantifier theory of $\Ceers$ is $\equiv_T \emptyset^{(m)}$?
\end{question}

In analyzing the $\mathbb{Z}$-dark minimal ceers, we concluded that the $\I$-degrees of a $\mathbb{Z}$-dark minimal ceer all of whose classes are computable have joins with every other $\I$-degree.

In \cite{Andrews-Sorbi}, we showed that $\Id$ is $\emptyset$-definable in $\Ceers$ as the only ceer which is minimal over the finite ceers and has a join with every other ceer. Lemma \ref{joinsforZdarkmins} shows that in $\Ceers_{/\I}$, any $\mathbb{Z}$-dark minimal ceer with all  computable classes also has this property. We ask:

\begin{question}
	Is $\bId$ $\emptyset$-definable in $\Ceers_{/\I}$?
	
	Is the collection of $\I$-degrees of dark minimal ceers definable (even with parameters)? That is, is there a definable way to distinguish between dark minimal and $\mathbb{Z}$-dark minimal $\I$-degrees?
\end{question}


\end{document}